\numberwithin{equation}{section}
\def\be{\begin{equation}}
\def\ee{\end{equation}}
\def\bse{\begin{subequations}}
\def\ese{\end{subequations}}
\renewcommand\d{\partial}
\renewcommand\a{\alpha}
\def\eps {\varepsilon}
\newcommand{\R}{\mathbb R}
\newcommand{\C}{\mathbb C}
\newcommand{\RM}{{\mathbb{R}}}
\newcommand{\CM}{{\mathbb{C}}}
\newcommand{\NM}{{\mathbb{N}}}
\newcommand{\ZM}{{\mathbb{Z}}}
\newtheorem{theorem}{Theorem}[section]
\newtheorem{proposition}[theorem]{Proposition}
\newtheorem{corollary}[theorem]{Corollary}
\newtheorem{lemma}[theorem]{Lemma}
\newtheorem{remark}[theorem]{Remark}
\theoremstyle{definition}
\newtheorem{definition}[theorem]{Definition}
\newtheorem{assumption}[theorem]{Assumption}
\title{On the Modulation of Wave Trains in the Ostrovsky Equation}
\author{Mathew~A.~Johnson\thanks{Department of Mathematics, University of Kansas, 1460 Jayhawk Boulevard, 
Lawrence, KS 66045, USA; matjohn@ku.edu}\quad
Jeffrey Oregero\thanks{Department of Mathematics, University of Kansas, 
1460 Jayhawk Boulevard, 
Lawrence, KS 66045, USA; oregero@ku.edu}\quad 
Wesley R. Perkins\thanks{Division of Mathematics and Computer Science, Lyon College, 2300 Highland Road, Batesville, AR 72501 USA;
wesley.perkins@lyon.edu}}
\date{\today}
\begin{document}
	
\maketitle

\begin{abstract}
We consider the nonlinear wave modulation of arbitrary amplitude periodic traveling wave
solutions of the Ostrovsky equation, which arises as a model for the unidirectional propagation of small-amplitude, weakly nonlinear
surface and internal gravity waves in a rotating fluid of finite depth.    While the modulation of such waves with
asymptotically small amplitudes of oscillation (the so-called Stokes waves) has been studied in several works,
our goal is to understand the modulational dynamics of general amplitude wave trains.  To this end, we first use
Whitham's theory of modulations to derive
a dispersionless system of quasilinear partial differential equations that is expected to model the slow evolution
of the fundamental characteristics of a given wave train.  In practice, the modulational stability or instability of a given wave train is 
considered to be determined by the hyperbolicity or ellipticity, respectively, of the resulting system of Whitham modulation equations.
Using rigorous spectral perturbation theory we then study the spectral (linearized) stability problem for a given wave train solution of the Ostrovsky equation, 
directly connecting the hyperbolicity or ellipticity of the associated Whitham system to the rigorous spectral stability problem for the underlying wave.
Specifically, we prove that strict hyperbolicity of the Whitham system implies spectral stability near the origin in the spectral plane, i.e. so-called
spectral modulational stability, while ellipticity implies spectral instability of the underlying wave train.  
\end{abstract}

\section{Introduction}
\label{s:intro}

The famous Korteweg-de Vries (KdV) equation 
\begin{equation}\label{e:kdv}
u_t+uu_x+\beta u_{xxx}=0,
\end{equation}
often arises as a canonical model for the unidirectional propagation of small amplitude, long wavelength internal
or surface water waves in the ocean. 
For the KdV equation, the parameter $\beta\neq 0$ is irrelevant and can be scaled to $\beta=1$.
While the KdV equation has been enormously successful in this context, accounting for the competition between the main effects
in the physics of nonlinear waves (namely, dispersion and nonlinearity), 
one drawback in the modeling is that it ignores the fact that such waves in the ocean necessarily occur in a rotating fluid.  Although
the effect of background rotation may arguably be considered irrelevant for waves on small space and time scales, the fact that 
surface and internal water waves can be observed to persist in the ocean for several days  strongly suggests there are situations when
the rotational effects of the fluid may become important.

To incorporate the effects of background rotation, Lev Ostrovsky proposed in 1978 the equation
\be 
(u_t + uu_x + \beta u_{xxx})_x = \gamma u
\label{e:ostrov}
\ee
with $\beta\neq 0$ as a model for the unidirectional propagation of small amplitude, long wavelength internal or surface water waves in a rotating
fluid \cite{Lev78}.  In this way, the Ostrovsky equation \eqref{e:ostrov}, as it is now known, may be considered as a rotational modification
of the KdV equation \eqref{e:kdv}.  The constant $\gamma>0$ is a parameter modeling the Coriolis dispersion due to the rotation of the background fluid \cite{GS91}.  
In contrast to the KdV equation \eqref{e:kdv}, the non-zero constant $\beta$ can not be scaled out and instead
determines the type of dispersion, with $\beta>0$ applying to 
surface and internal ocean waves and $\beta<0$ applying to surface capillary waves and to the modeling of oblique magneto-acoustic waves in plasma. 
Noting that the linear dispersion relation for \eqref{e:ostrov} is given by
\[
\omega(k) = \frac{\gamma}{k}-\beta k^3,
\]
we see that the additional $\gamma u$ term in \eqref{e:ostrov} significantly effects the dispersion of long waves, and hence
in this way introduces a ``large-scale" dispersion into the modeling that is not present in KdV based theories.  
With the incorporation of large scale dispersion, since its introduction the Ostrovsky equation has become a popular model in oceanology and
has found relevance in several other fields such as plasma physics.  
A more extensive account of the applications and history 
of the Ostrovsky equation is beyond the scope of this work, and interested
readers may consult, for example, \cite{B92,OS98,Step20} and references therein.

Given the physical relevance of \eqref{e:ostrov} and its close relationship to the KdV equation, it is clearly  interesting and meaningful to study
the differences in the dynamics of solutions between the KdV and Ostrovsky models.  
One significant way in which the dynamics of solutions differs between the KdV and Ostrovsky equations has to do with the stability
of periodic traveling waves (sometimes referred to as wave trains).  In the context of the full Euler equations, Benjamin \& Feir and (independently) Whitham used formal
asymptotic methods to argue that periodic traveling  water waves with asymptotically small amplitude of oscillation, the so-called Stokes waves, 
exhibit a modulational / side-band instability whenever the frequency is sufficiently large (i.e. their wavelength is sufficiently short).  A rigorous
proof of this so-called Benjamin-Feir instability, sometimes referred to as a modulational instability, 
was first provided by Bridges \& Mielke \cite{BM95}.  While all periodic traveling wave solutions
of the KdV equation are known to be dynamically (spectrally, modulationally, and orbitally) stable, the Stokes waves for the Ostrovsky equation
do in fact exhibit a modulational instability for short waves.
This was investigated for the Ostrovsky equation formally via weakly nonlinear analysis in \cite{GH08,GSA16}, and
just very recently the modulational instability of short waves in \eqref{e:ostrov} was rigorously verified via a delicate study of the spectrum
of the associated linearized operator 
in \cite{BJP24}.  

\

The primary goal of this work is to investigate
the modulational stability and instability of periodic traveling wave solutions in the Ostrovsky equation outside of the asymptotically small amplitude (i.e. Stokes) regime, thereby
providing a theory that applies to general amplitude wave trains and their modulations in \eqref{e:ostrov}.

\

While the modulation theory of Stokes waves is relatively well developed, the theory of wave modulation for general amplitude periodic traveling waves 
is considerably less developed.  There are effectively two approaches. The first is to apply Whitham's theory of wave modulations, which can 
be executed via a formal multiple scales / WKB asymptotic calculation where one separates in space-time a fast scale associated with the periodic
wave motion from a slow scale associated with the slow variation of the fundamental wave characteristics (e.g. the amplitude, frequency, momentum, etc.).  
The result at leading order is a dispersonless system of quasilinear partial differential equations (PDEs) 
that formally govern the slow space-time evolution of the fundamental wave characteristics.  Whitham's theory
posits that the well-posedness (i.e. hyperbolicity) of this system implies the modulational stability of the underlying wave, while the ill-posedness (i.e. ellipticity) 
implies the modulational instability of the underlying wave.  Whitham's theory is by now well-developed, and various researchers have used
Whitham's theory of modulations to describe finer dynamical behaviors within a given system.  For example, Whitham's theory of modulations has
proven to be a key tool in dispersive shock wave theory and the theory of soliton gases.   See, for instance, \cite{El21,EH16} and references therein.  
Furthermore, Whitham's theory opens itself to direct numerical
investigation, as the resulting quasilinear system is typically of low dimension.

In the context of the Ostrovsky equation \eqref{e:ostrov}, we will show that the set of all periodic traveling wave solutions constitute a
two-dimensional smooth manifold parameterized by the spatial frequency $k$ and the momentum $P$ (a conserved quantity, as we will see) of the wave.
Using a multiple-scales / WKB approximation, we will then show that slow modulations of 
a given periodic wave train $\phi_0(x)=\phi_0(x;k_0,P_0)$ will (formally and approximately) evolve slowly as a modulated wave train of the form
\[
u(x,t;X,S)\approx \phi(x;k(X,S),P(X,S)) ,
\]
where $(X,S)=(\eps x,\eps t)$ are the slow space-time variables and where the modulation functions $k$ and $P$ evolve (slowly) near $(k_0,P_0)$ 
according to a dispersionless quasilinear first order system of the form
\begin{equation}\label{e:W_intro}
\partial_S(k,P) = \mathcal{W}(\phi)\partial_X(k,P),
\end{equation}
where $\mathcal{W}(\phi)$ is a $2\times 2$ matrix depending on the underlying slowly modulated wave $\phi$.  According to Whitham's theory
of modulations, the stability of the base wave $\phi_0$ to slow modulations is determined by the eigenvalues of the constant matrix $\mathcal{W}(\phi_0)$.
Indeed, linearizing the Whitham system \eqref{e:W_intro} about $\phi_0$, here treated as a constant function of the slow variables $(X,S)$, we see the eigenvalues 
of the linearization of \eqref{e:W_intro} will be of the form
\[
\nu_j(\xi)=i\xi\alpha_j,~~j=1,2
\]
where the $\alpha_j$ are the eigenvalues of the matrix $\mathcal{W}(\phi_0)$ and $\xi\in\RM$ is a parameter.  The hyperbolicity of the Whitham system \eqref{e:W_intro} at $\phi_0$,
meaning all the $\alpha_j$ are real, indicates a (marginal) spectral stability of $\phi_0$ as a solution of \eqref{e:W_intro}.  On the other hand, the ellipticity of \eqref{e:W_intro}
at $\phi_0$, meaning the $\alpha_j$ have non-zero imaginary part, imply the spectral instability of $\phi_0$ as a solution of \eqref{e:W_intro}, indicating that the underlying
periodic traveling wave $\phi_0$ is unstable to slow modulations.  

\begin{remark}
The application of Whitham's theory to the Ostrovsky equation as presented here appears to be new in the literature.  In \cite{WJ17}, Whitfield \& Johnson
apply Whitham's modulation theory in a particular scaling of the weak-rotation limit $\gamma\to 0^+$.  In this singular KdV type limit, the resulting modulation
system derived in \cite{WJ17} is $3$-dimensional, owing to the fact that in the limiting $\gamma=0$ model (i.e. the KdV equation) the manifold of periodic traveling
wave solutions is three-dimensional.  See also Remark \ref{r:WJ} below.  One of the main contributions of our work is to apply Whithams' theory of modulations to the Ostrovsky equation
for a general (fixed) value of $\gamma>0$.  
\end{remark}

A second approach to studying the modulational stability/instability for a given PDE is to study the spectrum of the associated 
linearization to localized perturbations in a sufficiently small neighborhood of the origin in the spectral plane.  
Unlike Whitham's theory described above, which is based on formal asymptotic methods, this approach is often carried
out either with rigorous functional analytic techniques or by well-conditioned numerical methods.  In the context of the
Ostrovsky equation, after moving to a co-moving frame the spectral stability of a given traveling wave solution $\phi_0$
is governed by the spectrum of a linear operator pencil\footnote{Sometimes referred to as a ``generalized spectral problem."  
In the literature, however, spectral problems of the form $(\lambda B-A)v=0$ are often referred to as linear operator pencils.} of the form
\begin{equation}\label{e:lin_intro}
\mathcal{L}[\phi_0]v=\lambda \partial_\theta v
\end{equation}
considered for $v\in L^2(\RM)$, where here $\lambda\in\CM$ is the spectral (i.e. linearized growth) parameter and 
where $\mathcal{L}[\phi_0]$ is a non-symmetric, fourth-order linear differential operator with periodic coefficients depending on the 
profile $\phi_0$.  Due to the non-invertibility of the operator $\partial_\theta$ on $L^2(\RM)$, even in the presence of a mass-zero constraint,
the above linear operator pencil
can not be recast as a (standard) spectral problem for an integro-differential operator by inverting $\partial_\theta$.
Handling this analytical detail, and in particular the spectral perturbation theory for linear operator pencils, is a key feature of our analysis.

\begin{remark}
There are several works in the literature where the spectral stability for a given nonlinear wave $\phi_0$ is determined
by a  linear operator pencil
of the form \eqref{e:lin_intro}.  See, for example, \cite{HSS12_2,KS14,PS20,SS16_2}.  These previous studies, however,
concerned either the  case where $\phi_0\in L^2(\RM)$ is a solitary wave with \eqref{e:lin_intro} posed on $L^2(\RM)$, or the case where $\phi_0\in L^2_{\rm per}(0,T)$
is a $T$-periodic wave with \eqref{e:lin_intro} posed on $L^2_{\rm per}(0,T)$.  In the solitary wave case, the linearized operator can be decomposed via
\[
\mathcal{L}[\phi_0]=\mathcal{L}_0+\mathcal{K},
\]
where here $\mathcal{L}_0$ is a (typically positive-definite) constant coefficient pseudo-differential operator and $\mathcal{K}$ is a relatively compact perturbation.  This
makes the determination of the essential spectrum trivial (via the Weyl Essential Spectrum Theorem), thus reducing the problem to that of studying the presence of
unstable eigenvalues.  In the periodic case, where \eqref{e:lin_intro} is considered with periodic boundary conditions, there is in fact no essential spectrum and the stability
problem again reduces to the study of eigenvalues.  As we will see, in the present work we study \eqref{e:lin_intro} when $\phi_0$ is $T$-periodic but where the 
spectral problem \eqref{e:lin_intro} is considered on $L^2(\RM)$.  By standard results in Floquet-Bloch theory the spectrum of \eqref{e:lin_intro} in this case is comprised entirely
of essential spectrum, and hence the previous studies of spectral problems of the form \eqref{e:lin_intro} do not apply.
\end{remark}

Through the application of rigorous, functional analytic techniques we will study the $L^2(\RM)$-spectrum associated 
with the linear operator pencil \eqref{e:lin_intro} in a neighborhood of the origin $\lambda=0$ in the spectral plane when $\phi_0$ is a periodic
traveling wave solution of the Ostrovsky equation \eqref{e:ostrov}.  In particular,
we prove that the $L^2(\RM)$-spectrum near the origin consists of two $C^1$ curves (of essential spectrum) that admit the asymptotic expansion
\[
\lambda_j(\xi)=ic\xi+\nu_j(\xi)+o(\xi)=i\xi(\alpha_j+c)+o(\xi),~~|\xi|\ll 1,~~j=1,2
\]
where the $\alpha_j\in\CM$ are precisely the eigenvalues of the matrix $\mathcal{W}(\phi_0)$ coming from Whitham's theory of modulations.\footnote{In
the case where the $\alpha_j$ are distinct, the $o(\xi)$ error  can be replaced by $\mathcal{O}(\xi^2)$.}  Thus, not only do we perform
a rigorous spectral stability study, but we are able to rigorously justify the stability predictions coming from Whithams' theory.  This rigorous
connection is the content of our main result.

\begin{theorem}\label{t:main}
Suppose that $\phi_0$ is a $T_0=1/k_0$ periodic traveling wave solution of \eqref{e:ostrov} with wave speed $c_0>0$, and suppose that the set of nearby periodic
traveling wave profiles $\phi$ with speed close to $\phi_0$ is a two-dimensional smooth manifold parameterized by $(k,P(\phi))$, where $1/k$ denotes the fundamental
period of the wave and $P$ denotes the momentum of the wave.  If the Whitham system is strictly hyperbolic at $(k_0,P(\phi_0))$, then the wave $\phi_0$ is spectrally
modulationally stable, i.e. the $L^2(\RM)$-spectrum associated with the linearization of \eqref{e:ostrov} about $\phi_0$ is purely imaginary in a sufficiently small neighborhood
of the origin in the spectral plane.  Furthermore, a sufficient condition for $\phi_0$ to be spectrally unstable to perturbations in $L^2(\RM)$ is that the Whitham modulation
system is elliptic at $(k_0,P(\phi_0))$.
\end{theorem}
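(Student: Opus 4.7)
The plan is to combine Floquet-Bloch decomposition with a spectral perturbation analysis of the linear operator pencil \eqref{e:lin_intro}, and then to identify the resulting reduced $2\times 2$ matrix with the Whitham matrix $\mathcal{W}(\phi_0)$ coming from the multiple-scales derivation of \eqref{e:W_intro}. First I would pass to a co-moving frame $\theta = x-c_0 t$ so that $\phi_0(\theta)$ is stationary and $\mathcal{L}[\phi_0]$ has $T_0$-periodic coefficients, and then apply the Floquet-Bloch transform to decompose
\[
\sigma_{L^2(\mathbb{R})}\bigl(\mathcal{L}[\phi_0],\,\partial_\theta\bigr) \;=\; \bigcup_{\xi\in[-\pi k_0,\,\pi k_0)} \sigma_{L^2_{\rm per}(0,T_0)}\bigl(\mathcal{L}^\xi[\phi_0],\,\partial_\theta+i\xi\bigr).
\]
Each Bloch fiber is a pencil problem on a compact interval with discrete spectrum, so the essential spectrum near the origin is built out of analytic eigenvalue curves $\lambda(\xi)$ of the Bloch pencils. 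The eventual $ic_0\xi$ translation in the final expansion arises from the frame shift together with this decomposition.

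Next I would analyze the unperturbed fiber at $\xi=0$ and $\lambda=0$. By the hypothesis that nearby traveling-wave profiles form a smooth two-parameter manifold parameterized by $(k,P)$, the generalized null space of the pencil $(\mathcal{L}[\phi_0],\partial_\theta)$ at $\lambda=0$ should be exactly two-dimensional: a concrete basis can be produced by differentiating the traveling-wave profile equation with respect to $k$ and $P$, with Jordan-chain elements coming from the translation mode $\partial_\theta\phi_0$ and from $\partial_c\phi$. Letting $\Pi_0$ denote the associated spectral projection, I would expand the Bloch pencil as $\mathcal{L}^\xi[\phi_0]=\mathcal{L}[\phi_0]+\xi\mathcal{L}_1+\xi^2\mathcal{L}_2+\cdots$ and perform a Lyapunov-Schmidt reduction against this two-dimensional subspace, producing an effective $2\times 2$ matrix pencil whose eigenvalues govern the curves $\lambda_j(\xi)$. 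The heart of the argument is the algebraic identification that the reduced matrix at order $\xi$ equals exactly $i(c_0 I+\mathcal{W}(\phi_0))$. To prove this I would show that the Fredholm solvability conditions appearing in the reduction are precisely the pairings against adjoint null vectors whose action on the $(k,P)$-derivatives of $\phi$ reproduces the coefficients of $\mathcal{W}(\phi_0)$, exactly as the averaged conservation laws (for mass and momentum) reproduce those coefficients in the Whitham derivation of \eqref{e:W_intro}.

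From this identification the conclusion is immediate: strict hyperbolicity of $\mathcal{W}(\phi_0)$ gives distinct real eigenvalues $\alpha_j$, so by analytic $\xi$-dependence of the reduced matrix one has $\lambda_j(\xi)=i\xi(c_0+\alpha_j)+\mathcal{O}(\xi^2)$, which lies on the imaginary axis for $|\xi|\ll 1$, establishing spectral modulational stability. Conversely, ellipticity forces some $\alpha_j$ with $\mathrm{Im}(\alpha_j)\neq 0$, so one branch satisfies $\mathrm{Re}(\lambda_j(\xi))\sim -\xi\,\mathrm{Im}(\alpha_j)\neq 0$ for small nonzero $\xi$, yielding spectral instability. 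The main obstacle will be the perturbation theory itself: because $\partial_\theta$ is not invertible (even on $L^2_{\rm per}(0,T_0)$ due to constants, and not on $L^2(\mathbb{R})$ even with a mass-zero constraint), the pencil \eqref{e:lin_intro} cannot be recast as a standard analytic family of operators, so Kato's perturbation theory does not apply off-the-shelf. One must instead treat the pencil directly, carefully tracking the Jordan structure of $(\mathcal{L}[\phi_0],\partial_\theta)$ at $\lambda=\xi=0$, verifying its analytic stability under $\xi$-perturbation, and handling the bookkeeping that aligns the abstract Lyapunov-Schmidt solvability conditions with the concrete derivatives of conserved quantities appearing in \eqref{e:W_intro}. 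Once this correspondence is in place, the rest of the argument reduces to finite-dimensional linear algebra.
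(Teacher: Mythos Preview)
Your approach is essentially the same as the paper's: Floquet--Bloch decomposition, analysis of the two-dimensional generalized kernel at $(\lambda,\xi)=(0,0)$ built from $\phi'$ and $\phi_P$, spectral perturbation of the pencil to a $2\times 2$ reduced problem, and term-by-term identification with $\mathcal{W}(\phi_0)$. You also correctly anticipate that the pencil structure is the main technical obstacle.

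There is, however, a genuine gap in your stability argument. From the expansion $\lambda_j(\xi)=i\xi(c_0+\alpha_j)+\mathcal{O}(\xi^2)$ with $\alpha_j$ real and distinct, you assert this ``lies on the imaginary axis for $|\xi|\ll 1$.'' That does not follow: the leading-order term is purely imaginary, but nothing in the expansion prevents the $\mathcal{O}(\xi^2)$ correction from having nonzero real part. The paper closes this gap by invoking the Hamiltonian spectral symmetry $(w,\lambda,\xi,\theta)\mapsto(\overline{w},-\overline{\lambda},\xi,-\theta)$, which forces the Bloch spectrum at each fixed $\xi$ to be symmetric about the imaginary axis. If some $\lambda_j(\xi)$ left the imaginary axis for small $\xi\neq 0$, then $-\overline{\lambda_j(\xi)}$ would also be a nearby eigenvalue, so the two curves would have to satisfy $\lambda_1(\xi)=-\overline{\lambda_2(\xi)}$; comparing leading-order terms then gives $\alpha_1=\alpha_2$, contradicting strict hyperbolicity. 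Without this symmetry argument (or some substitute), your proof of the stability direction is incomplete. As the paper remarks, in non-Hamiltonian settings strict hyperbolicity of the Whitham system is genuinely insufficient for modulational stability, so this step is not a formality.
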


\begin{remark}\label{r:stokes}
In Section \ref{s:stokes} below, we show that the Whitham modulation system \eqref{e:W_intro} evaluated at a Stokes wave  for the Ostrovsky equation is strictly hyperbolic 
for sufficiently long waves (i.e. when the spatial frequency $k$ is sufficiently small) while it is elliptic  for sufficiently short waves: see Remark \ref{r:stokes2} below.
This demonstrates that the cases of strict hyperbolicity as well as ellipticity of the Whitham modulation system both do occur in the
case of the Ostrovsky equation \eqref{e:ostrov}.  By Theorem \ref{t:main} it follows that some periodic traveling wave solutions
of \eqref{e:ostrov} will be spectrally unstable to perturbations in $L^2(\RM)$ while others will be exhibit a (spectral) modulational stability.  
In forthcoming work, the authors will provide a thorough numerical investigation of the hyperbolicity/ellipticity of the Whitham modulation
system for the Ostrovsky equation.  These numerical calculations will directly complement the analytical investigation here by providing, through Theorem \ref{t:main},
rigorous (spectral) modulational stability/instability results from well-conditioned numerical studies of the Whitham
system \eqref{e:W_intro}.
\end{remark}

Importantly, our work provides both a derivation of Whitham's modulation
system for the Ostrovsky equation, for general amplitude periodic traveling waves and for general Coriolis parameters $\gamma>0$, 
as well as a mathematically rigorous investigation of the spectral stability problem about a given periodic traveling wave.
Morevoer, the above theorem  can be used in two different ways.  First, it allows one to use either rigorous mathematics or well-conditioned numerical techniques to study the
Whitham system \eqref{e:W_intro} and, from that, infer mathematically rigorous results pertaining to the modulational stability of the underlying wave train.
On the other hand, one can use Floquet-Bloch / Galerkin methods to numerically study the spectrum of the linearization in a neighborhood of the spectral
plane, and from these numerical computations be able to make direct inferences regarding the dynamics of solutions of the associated Whitham modulation
system (this latter approach may be useful in the study of dispersive shock waves).  


Finally, although it is not the subject of our investigation here, a special case of the Ostrovsky equation \eqref{e:ostrov} occurs
when the third-order dispersion term is neglected, corresponding to $\beta=0$.  In this case, \eqref{e:ostrov} reduces to
\be
\label{e:reduced_ostrov}
(u_t + uu_x)_x = \gamma u,
\ee
which is often referred to as the reduced Ostrovsky equation or, sometimes, as the Hunter-Saxton model.  
Compared to the classical Ostrovsky equation \eqref{e:ostrov}, the reduced model \eqref{e:reduced_ostrov} has several mathematical advantages.
For example, the reduced Ostrovsky equation is known to possess a Lax pair, and is hence completely integrable via the inverse scattering transform
while also possessing an infinite number of conserved quantities.  When $\beta\neq 0$, however, the Ostrovsky equation \eqref{e:ostrov}
is not completely integrable (although it still does admit a Hamiltonian structure).  While all of our forthcoming analysis
will be carried out in the $\beta\neq 0$ case, we will at the end of the manuscript make a series of comments regarding the applicability of our
work to this reduced $\beta=0$ case.

\

The organization of this paper is as follows.  We emphasize that throughout our work we will consider the $\beta\neq 0$ case.  
In Section \ref{s:basics} we discuss some of the basic properties of the Ostrovsky equation \eqref{e:ostrov}, 
including the Hamiltonian structure and conserved quantities as well as the existence of periodic traveling wave solutions.  In particular,
we will introduce our two main structural assumptions used throughout this work: the first having to do with the dimensionality and parameterization of the manifold of periodic
traveling wave profiles, and the second corresponding to the non-degeneracy of the linearization arising from the Hamiltonian structure.  Both of these assumptions
are supported by various existing numerical and analytical works in the literature.  In Section \ref{s:Whitham} we derive the Whitham modulation system
associated to the Ostrovsky equation \eqref{e:ostrov} via a direct multiple-scales / WKB approximation.  We will further provide an explicit study of the Whitham 
system in the case of the Stokes waves for the Ostrovsky equation, and will connect the Whitham system to the well-known Lighthill condition for the modulational
instability of Stokes waves.  

In Section \ref{s:modtime} we perform a rigorous spectral stability calculation, using Floquet-Bloch theory and spectral
perturbation theory to derive a $2\times 2$ matrix whose eigenvalues rigorously encode the $L^2(\RM)$-spectrum in a sufficiently small neighborhood of the 
origin in the spectral plane for the associated linearized problem.  We then provide the proof of Theorem \ref{t:main} in Section \ref{s:th_proof}, providing
a term-by-term comparison of the Whitham matrix $\mathcal{W}$ discussed above and the $2\times 2$ matrix coming from our rigorous theory.  
We note that while the spectral and spectral perturbation theory for linear operator pencils is similar to the classical theory
for closed, densely defined linear operators (see Kato's  book \cite{K76}), there are indeed some important technical differences.  As such,
in an attempt to make our work as self contained as possible we collect and present the relevant results in Appendix \ref{s:pencil}.

Finally, while all of the aforementioned analysis is conducted entirely in the $\beta\neq 0$ case, we close in Section \ref{s:W_reducedostrov} by providing
a discussion about the applicability of our results to the reduced Ostrovsky equation \eqref{e:reduced_ostrov}.  Assuming that our main structural
conditions continue to hold in this $\beta=0$ case, which does seem to be supported by other works in the literature, our analysis
suggests that all of our calculations and our main result Theorem \ref{t:main} continue to hold in the reduced case.  Interestingly,
our work suggests that although the reduced $\beta=0$ model is clearly a singular perturbation of the classical Ostrovsky equation \eqref{e:ostrov},
the modulation system for the reduced Ostrovsky equation is a \emph{regular perturbation} of that associated with \eqref{e:ostrov}.  

\

\noindent
{\bf Acknowledgments:} The work of MJ was partially supported by the NSF under grant DMS-2108749.  The authors are thankful
to Yuri Latushkin and Fritz Gesztesy for helpful conversations regarding the spectral theory for linear operator pencils, in particular
for making us aware of the works \cite{Baum85,GGK90,MP15}.

\

\noindent
{\bf Notation:} 
Let $X$ be a (complex) Banach space.  By a linear operator pencil on $X$ we mean a linear operator
of  the form $\lambda B- A$ where $A,B:X\to X$ are closed, densely defined linear operators\footnote{Clearly we must assume $D(A)\cap D(B)\neq\emptyset$.}
and $\lambda\in\CM$ is a complex spectral parameter.  The resolvent set $\rho(A,B)$ of the pencil $\lambda B-A$ is the 
set of all $\lambda\in\CM$ such that $\lambda B-A$ is boundedly invertible, and the spectrum of the pencil\footnote{When appropriate,
we will emphasize the underlying Banach space $X$ by writing $\sigma_X(A,B)$ and $\rho_X(A,B)$.} is $\sigma(A,B)=\CM\setminus\rho(A,B)$.
The spectrum of a given pencil may, in the usual way be decomposed into point spectrum $\sigma_p(A,B)$, consisting of isolated eigenvalues,
and the essential spectrum\footnote{In particular, $\lambda\in\sigma_{\rm ess}(A,B)$ provided $\lambda B-A$ is not Fredholm or
is Fredholm but not index zero.  Further, throughout if $B=I$ then we use the standard notation $\rho(A,B)=\rho(A)$ and $\sigma(A,B)=\sigma(A)$.}  $\sigma_{\rm ess}(A,B)=\sigma(A,B)\setminus\sigma_p(A,B)$.  Further, if $\lambda_0\in\sigma_p(A,B)$ has eigenfunction
$v\in X\setminus\{0\}$, the function $w\in X\setminus\{0\}$ is a generalized eigenfunction above $v$ (i.e. it is part of a Jordan chain) provided that
\[
(\lambda_0 B-A)w=Bv.
\]
The geometric multiplicity of an eigenvalue $\lambda_0\in\sigma_p(A,B)$ is defined as the number of corresponding linearly independent eigenfunctions,
i.e. it is the dimension of ${\rm ker}\left(\lambda_0 B-A\right)$, while the algebraic multiplicity is the greatest value of the sum of the lengths of the Jordan chains
corresponding to linearly independent eigenvectors associated to $\lambda_0$.  For more information and background on linear operator pencils,
see, for example, \cite{GGK90,MP15}.

Also, throughout our work, if $f\in L^2_{\rm per}(0,T)$ has mean-zero, i.e. 
$\int_0^T f~dx=0$, then $\partial_x^{-1} f$ denotes the (unique) mean-zero antiderivative of $f$.

\section{Basic Properties of the Ostrovsky Equation}\label{s:basics}

In this section, we begin by discussing the conservation laws and conserved quantities associated with solutions
of the Ostrovsky equation \eqref{e:ostrov}.  We then discuss the existence theory for periodic traveling waves, and
introduce our main assumption regarding the local parametrization of the manifold of all such solutions.  Additionally,
we will address the non-degeneracy of the linearization associated with the Hamiltonian structure, which will play 
a key role in our forthcoming analysis.

\

Throughout this section, unless otherwise explicitly stated, we will assume that $\beta\neq 0$, i.e. our focus is on the general model \eqref{e:ostrov} 
and will \emph{not} be on the reduced model \eqref{e:reduced_ostrov}.

\subsection{Conservation Laws \& Conserved Quantities}

We begin by noting that the Ostrovsky equation is known \cite{T_09,YLHD18} to be locally well-posed in $H^s(\RM)$ for $s\geq -3/4$, and ill-posed
for $s<-3/4$.  Further, it is clear that if $u=u(x,t)$ is a solution of \eqref{e:ostrov} which is $T$-periodic in $x$, 
then $u$ is necessarily mean zero, i.e. we have
\begin{equation}\label{e:mean_zero}
\int_0^T u(x,t)dx=0
\end{equation}
for so long as the solution exists.  As such, it is clear that the operator $\partial_x^{-1}$, yielding the mean-zero antiderivative,
is well-defined on all such solutions.  With this in mind, we note that \eqref{e:ostrov} is known to admit the following
three conservation laws:
\begin{equation}\label{e:cons}
\left\{\begin{aligned}
	&u_t = \left(\frac{1}{\gamma}\left(u_t+\beta u_{xxx}+uu_x\right)_t\right)_x,~~~~
\left(\frac{u^2}{2}\right)_t=\left(\beta\left(\frac{1}{2}u_x^2-uu_{xx}\right)-\frac{1}{3}u^3+\frac{\gamma}{2}\left(\partial_x^{-1}u\right)^2\right)_x,\\
	&\left(\frac{u^3}{6}+\frac{\gamma}{2}\left(\partial_x^{-1}u\right)^2-\frac{\beta}{2}u_x^2\right)_t=\\
	&\qquad
		\left(\beta^2\left(u_xu_{xxx}-\frac{1}{2}u_{xx}^2\right)+\beta\left(uu_x^2-\frac{1}{2}u^2u_{xx}\right)-\beta\gamma u_x\left(\partial_x^{-1}u\right)
			+\frac{\gamma^2}{2}\left(\partial_x^{-2}u\right)^2-\frac{u^4}{8}\right)_x.
\end{aligned}\right.
\end{equation}
Upon integration, the first conservation law yields conservation of mass which here is equivalent to \eqref{e:mean_zero}.  
For spatially $T$-periodic functions as discussed above, the above also yields the conserved quantities
\begin{equation}\label{e:P}
P(u)=\frac{1}{2}\int_0^Tu^2dx~~{\rm and}~~H(u)=\int_0^T\left(\frac{u^3}{6}+\frac{\gamma}{2}\left(\partial_x^{-1}u\right)^2-\frac{\beta}{2}u_x^2\right)dx
\end{equation}
corresponding to the momentum and Hamiltonian (energy), respectively, of the wave.

In passing, we note also that \eqref{e:ostrov} is a Hamiltonian system and can be written in the form
\[
u_t=-\partial_x\frac{\partial H}{\partial u}(u),
\]
where the symbol $\partial/\partial u$ denotes the variational derivative: see, for example \cite{CIL07}.  In this context, one sees
that the integral \eqref{e:mean_zero} corresponds to a Casimir for the flow,\footnote{Being associated
with the kernel of the symplectic form $\partial_x$.} while the momentum $P$ is associated
with the fact that \eqref{e:ostrov} is invariant with respect to spatial translations.  We will comment more about the Hamiltonian structure
in the next section.

\subsection{Existence \& Parameterization of Periodic Traveling Waves}\label{s:existence} 

The existence of localized and periodic traveling wave solutions of the Ostrovsky equation \eqref{e:ostrov} has been considered
by several authors \cite{BJP24,BP23,HSS17,IM08,JP16,LL06,LL07,LV04,LLW12,PS20}.  Oftentimes, waves are either constructed using variational methods or techniques
from bifurcation theory.  In particular, we note that closed form solutions can not be constructed in terms of elementary
 functions (for example, in terms of hyperbolic or elliptic functions), and existence itself can not be determined easily 
from phase plane analysis (as we will see, the phase space is four-dimensional).  Taking motivation from
several works, however, below we introduce a natural structural assumption regarding the manifold of smooth periodic traveling
wave solutions to \eqref{e:ostrov}.  

Traveling wave solutions of \eqref{e:ostrov} correspond to solutions of the form
\[
u(x,t) = \phi(x-ct),
\]
where here $\phi$ is the wave profile and $c\in\RM$ is the wave speed.  Introducing the traveling coordinate $\eta=x-ct$,
it follows that the profile $\phi(\eta)$ is a stationary solution of the evolutionary equation
\[
\left(u_t-cu_\eta+uu_\eta+\beta u_{\eta\eta\eta}\right)_\eta=\gamma u.
\]
In particular, the profile $\phi$ is necessarily a solution of the ordinary differential equation (ODE)
\begin{equation}\label{e:profile}
\left(-c\phi'+\phi\phi'+\beta\phi'''\right)'=\gamma \phi,
\end{equation}
where here $'$ denotes differentiation with respect to the traveling variable $\eta$.  Since the profile ODE \eqref{e:profile} is fourth-order and can not be integrated, studying
the set of all such traveling wave profiles is extremely challenging.  Here, we briefly review three sets of existence results appearing in the literature:
two based on bifurcation theory and applied to particular asymptotic limits for the model, and a third result coming from variational methods.
From these results, we will make a natural assumption concerning the structure of the manifold of periodic solutions
to the profile equation \eqref{e:profile}.

The first existence result discussed here pertains to the weak-rotation limit, i.e. it establishes the existence of (general amplitude) periodic solutions
of \eqref{e:profile} in the limit $\gamma\to 0^+$.  Note that when $\gamma=0$ the profile equation can be integrated twice to give
\begin{equation}\label{e:quad_kdv1}
-c\phi+\frac{1}{2}\phi^2+\beta\phi''=a+b\eta
\end{equation}
for some constants of integration $a,b\in\RM$.  Seeking solutions which are spatially periodic (hence bounded on the whole line) forces the condition $b=0$,
in which case the integrated profile equation \eqref{e:quad_kdv1} is precisely the profile equation associated with the well-studied KdV equation.
In particular, \eqref{e:quad_kdv1} can be directly integrated to quadrature and shown the set of periodic solutions constitute a 
%
%
three-parameter family $\phi_{KdV}(\cdot;k,M,P)$, up to spatial translations, that can be smoothly
parameterized by the spatial frequency $k$, the momentum $P=\frac{1}{2}\int_0^{1/k}\phi_{KdV}^2 d\eta$, and the mass $M=\int_0^{1/k}\phi_{KdV} d\eta$.
In \cite{HSS17}, it was shown that a necessary and sufficient condition for the KdV wave $\phi_{KdV}$ to bifurcate
for $0<\gamma\ll 1$ to a solution of the profile equation \eqref{e:profile} is that the KdV wave has zero mass, i.e. that
\[
\int_0^{1/k}\phi_{KdV}d\eta=0
\]
in which case \eqref{e:profile} admits a $1/k$-periodic solution of the form
\[
\phi(\eta;k,P)=\phi_{KdV}(\eta;k,0,P)+\mathcal{O}(\gamma).
\]
This produces a two-parameter family of periodic traveling wave solutions to the KdV equation which continue for $0<\gamma\ll 1$
to solutions of \eqref{e:profile}, thus yielding a two-parameter family of mean-zero ``nearly-KdV" periodic traveling wave solutions of the Ostrovsky equation
parameterized by the spatial frequency $k$ and the momentum $P$.  

The second existence result discussed here concerns the existence of so-called Stokes waves, i.e. periodic solutions of \eqref{e:profile} with asymptotically
small amplitude of oscillation.  When seeking $T=1/k$-periodic solutions of \eqref{e:profile} it is convenient to rescale the spatial
variable as $\theta=k\eta$ so that a $T=1/k$-periodic solution of \eqref{e:profile} corresponds to a $1$-periodic solution of the rescaled
profile equation
\begin{equation}\label{e:profile_stokes}
k^2\left(-c\phi'+\phi\phi'+\beta k^2\phi'''\right)'=\gamma\phi.
\end{equation}
In the works \cite{BJP24,BP23}, the authors use an elementary Lyapunov-Schmidt argument to show that for each fixed $k>0$ there exists a one-parameter family of 
smooth, even, $1$-periodic solutions $(\phi(\cdot;a,k),c(a,k))$ of \eqref{e:profile_stokes} parameterized by $a\in\RM$ for $|a|\ll 1$.  
Further, the profile $\phi$ and the wave speed $c$ both
depend analytically on both $a$ and $k$ and admit the asymptotic expansion\footnote{Technically, for $\beta<0$ one must enforce the non-degeneracy condition
$(2\pi k)^4\neq -\gamma/n^2\beta$ for any $n\in\NM$ to ensure simplicity of the kernel of the linearization of \eqref{e:profile_stokes} about the zero solution.  At the level
of the asymptotic expansions in \eqref{e:stokes_waves} one can already see this arise (for $n=2$ at least) by ensuring that the denominator of $A_2(k)$ is non-zero.  Larger
values of $n$, likewise, ensure that later terms in the asymptotic expansions are well-defined.  Note there
is no such restriction when $\beta>0$.  For more details, see \cite{BJP24,BP23}.  In our work, we will not reference again this non-degeneracy condition and it will
just simply be assumed to hold when appropriate.}
\begin{equation}\label{e:stokes_waves}
\left\{
\begin{aligned}
\phi(\theta;a,k)&=a\cos(2\pi \theta)+2a^2A_2(k)\cos(4\pi \theta)+\mathcal{O}(a^3),\\
c(a,k)&=\frac{\gamma}{4\pi^2 k^2}- 4\beta\pi^2 k^2+a^2A_2(k)+\mathcal{O}(a^4),
\end{aligned}\right.
\end{equation}
as $a\to 0$, where 
\[
A_2(k)=\frac{2k^2\pi^2}{3\left(\gamma+64\beta k^4\pi^4\right)}.
\]
In this asymptotic regime, the constructed waves naturally induce a two-parameter family (up to spatial translations) of periodic
solutions to the profile equation \eqref{e:profile} that are smoothly parameterized 
by the spatial frequency $k$ and the amplitude $a$.  Moreover, calculating the momentum of the above Stokes waves to be
\[
P\left(\phi(\cdot;a,k)\right)=\frac{1}{2}\int_0^{1}\phi(\theta;a,k)^2d\theta=\frac{a^2}{4}+\mathcal{O}(a^3),
\]
it follows that such waves with $0<|a|\ll 1$ can, similarly to those constructed in the $\gamma\to 0^+$ limit above, be smoothly reparameterized 
by the frequency $k$ and the momentum $P$.  

Outside of the above distinguished asymptotic limits there has been relatively little work regarding the existence of periodic
solutions of \eqref{e:profile}.  We do note that in \cite{IM08} the authors use variational methods to construct a two-parameter family of periodic
traveling waves parameterized by the spatial frequency and the momentum.  While the dependence on the solution, along with the various associated Lagrange multipliers,
is not analyzed there this result provides further evidence that periodic solutions to \eqref{e:profile} generally form a 2-parameter family
of smooth solutions that  can be parameterized by the spatial frequency $k$ and the momentum $P$.

\

Based on the above survey of known existence results, moving forward we  assume that the set of periodic
solutions to \eqref{e:profile} constitutes a two-dimensional smooth manifold that can be (locally) parameterized by
the frequency $k$ and the momentum $P$ of the wave.  Specifically, we make the following structural assumption throughout our work.

\begin{assumption}[Existence of Periodic Traveling Waves]\label{a:soln}
There exists an open set $\mathfrak{B}\subset\RM_+\times\RM_+$ such that for any $(k,P)\in\mathfrak{B}$ there exists
a wave speed $c(k,P)$ and a non-constant, smooth, even, $1/k$-periodic function $\phi$ with momentum $P=\frac{1}{2}\int_0^{1/k}\phi^2d\eta$ 
such that $\phi$ solves the profile equation \eqref{e:profile} with $c=c(k,P)$.  Further, the profile $\phi(\cdot;k,P)$
and the wave speed $c(k,P)$ are smooth functions of $(k,P)\in\mathfrak{B}$.
\end{assumption}
%

\begin{remark}\label{r:exist_assumption}
While we attempted above to justify the legitimacy of Assumption \ref{a:soln} by surveying known results in the literature,
we will also see in Section \ref{s:whitham_derivation} below that Assumption \ref{a:soln} is effectively 
\emph{required} in order for the Whitham modulation system to describe an evolutionary
equation on the manifold of periodic traveling wave solutions to the Ostrovsky equation.  That is, if Whitham's theory of modulations has a hope
of describing the slow modulation of periodic wave trains of the Ostrovsky equation \eqref{e:ostrov} then Assumption \ref{a:soln} must
hold.  We will revisit this in Remark \ref{r:re-param} below.
\end{remark}

\subsection{Nondegeneracy of the Linearization}

In this section, we introduce a final assumption that is key to our forthcoming work.  As motivation, we observe
that the profile equation \eqref{e:profile} is precisely the Euler-Lagrange equation for 
the action functional
\[
\Lambda(u):=H(u)-cP(u)
\]
acting on the space of mean-zero functions in $L^2_{\rm per}(0,T)$, where here $H$ and $P$ are the Hamiltonian (energy) and momentum  functionals
defined in \eqref{e:P}.  It follows that we can write \eqref{e:ostrov} in traveling coordinates in the Hamiltonian form
\[
u_t=-\partial_\eta\frac{\partial\Lambda}{\partial u}(u)	.
\]
As is common in Hamiltonian problems, the stability of a given equilibrium solution $\phi$ depends strongly on the spectral properties of the (symmetric)
Hessian operator
\[
\widetilde{\mathcal{L}}[\phi]:=\frac{\delta^2\Lambda}{\delta u^2}(\phi) 
=-\gamma\partial_\eta^{-2}-c+\phi+\beta\partial_\eta^2
\]
here considered as acting on the mean-zero subspace of $L^2_{\rm per}(0,T)$.  Of particular concern is the so-called non-degeneracy of the Hessian, corresponding
to the condition that the kernel of the Hessian is generated precisely by the continuous Lie-point symmetries of the underlying model.  In the case of the Ostrovsky
equation, we know at least one null-mode is generated by the spatial translation invariance of the governing PDE \eqref{e:ostrov}.  Indeed, differentiating the profile
equation \eqref{e:profile} with respect to $\eta$ and integrating twice with respect to $\eta$ yields
\[
\widetilde{\mathcal{L}}[\phi]\phi'=0.
\]
Throughout this work, we make the non-degeneracy assumption that the above accounts for the entirety of the kernel of the Hessian operator.

\begin{assumption}[Nondegeneracy]\label{a:ker}
For a given $T$-periodic traveling wave solution $\phi$ of the Ostrovsky equation \eqref{e:ostrov}, the kernel of the Hessian operator
$\widetilde{\mathcal{L}}[\phi]$, acting on the space of mean-zero functions in $L^2_{\rm per}(0,T)$, satisfies
\[
\ker\left(\widetilde{\mathcal{L}}[\phi]\right)={\rm span}\left\{\phi'\right\}.
\]
\end{assumption}

\begin{remark}
We note that Assumption \ref{a:ker} has been analytically verified for both asymptotically small amplitude (i.e. Stokes) periodic traveling wave solutions
of the Ostrovsky equation \eqref{e:ostrov} as well as for wave trains in the weak rotation $\gamma\to 0^+$ limit: see \cite{BJP24,HSS17}.  Throughout our work,
we simply assume the above non-degeneracy condition holds.
\end{remark}

In our forthcoming analysis, an important role will be played by the non-symmetric operator
\begin{equation}\label{e:L_1}
\mathcal{L}[\phi]
:=\gamma+\partial_\eta^2\left(c-\phi-\beta\partial_\eta^2\right),
\end{equation}
acting on all of $L^2_{\rm per}(0,T)$.  The actions of $\mathcal{L}[\phi]$ and $\widetilde{\mathcal{L}}[\phi]$ are closely
related on the set of mean-zero functions.\footnote{Indeed, if $f\in L^2_{\rm per}(0,T)$ has mean-zero then $\mathcal{L}[\phi]f=-\partial_\eta^2\widetilde{\mathcal{L}}[\phi]f$.}
The following characterizes the kernel of $\mathcal{L}[\phi]$, as well as its adjoint, acting on all of $L^2_{\rm per} (0,T)$.

%

\begin{lemma}\label{L:non-deg}
Let $\phi$ be a $T$-periodic traveling wave solution of the Ostrovsky equation
\eqref{e:ostrov}.   Under Assumption \eqref{a:soln} and Assumption \eqref{a:ker}, we have
\[
\ker_{L^2_{\rm per}(0,T)}\left(\mathcal{L}[\phi]\right)={\rm span}\left\{\phi' \right\}
\]
and
\[
{\rm ker}_{L^2_{\rm per}(0,T)}\left(\mathcal{L}^\dag[\phi]\right)={\rm span}\left\{\partial_\eta^{-1}\phi\right\}
`\]
where here $\mathcal{L}[\phi]$ is defined as in \eqref{e:L_1}.
\end{lemma}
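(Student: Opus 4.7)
The plan is to verify directly that $\phi' \in \ker \mathcal{L}[\phi]$ and $\partial_\eta^{-1}\phi \in \ker \mathcal{L}^\dagger[\phi]$, and then use Assumption~\ref{a:ker} together with Fredholm duality to show these generate the entire kernels. For the verification: differentiating the profile equation \eqref{e:profile} once in $\eta$ immediately rearranges into $\mathcal{L}[\phi]\phi' = 0$, while integrating \eqref{e:profile} once (the constant of integration vanishing by periodicity and mean-zero of $\phi$) yields $-c\phi' + \phi\phi' + \beta\phi''' = \gamma\partial_\eta^{-1}\phi$, from which a short calculation using $\partial_\eta^2\partial_\eta^{-1}\phi = \phi'$ shows $\mathcal{L}^\dagger[\phi]\partial_\eta^{-1}\phi = 0$.

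The dimension bound rests on the algebraic identity
\[
\mathcal{L}^\dagger[\phi] f \;=\; \gamma\langle f\rangle \;-\; \widetilde{\mathcal{L}}[\phi] f'',
\]
valid for every $f \in L^2_{\rm per}(0,T)$, where $\langle f\rangle := T^{-1}\int_0^T f\,d\eta$; this follows by splitting $f$ into its mean and mean-zero parts and using $\partial_\eta^{-2}f'' = f - \langle f\rangle$. Thus $f \in \ker\mathcal{L}^\dagger[\phi]$ iff $\widetilde{\mathcal{L}}[\phi]f'' = \gamma\langle f\rangle$, i.e., $\widetilde{\mathcal{L}}[\phi]$ sends the (automatically mean-zero) function $f''$ to a constant. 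When $\langle f\rangle = 0$, Assumption~\ref{a:ker} forces $f'' = \alpha\phi'$, and integrating twice while imposing periodicity and mean-zero yields $f = \alpha\partial_\eta^{-1}\phi$. To rule out the case $\langle f\rangle \neq 0$, I differentiate \eqref{e:profile} in $P$ using Assumption~\ref{a:soln} to obtain $\mathcal{L}[\phi]\phi_P = -c_P\phi''$, which combined with $\langle\phi,\phi_P\rangle = 1$ (from $P = \tfrac12\int\phi^2$) yields the identity $\widetilde{\mathcal{L}}[\phi]\phi_P = c_P\phi + 1/T$. If some mean-zero $h$ satisfied $\widetilde{\mathcal{L}}[\phi]h = 1$, then self-adjointness of $\widetilde{\mathcal{L}}[\phi]$ on the mean-zero subspace would give
\[
0 \;=\; \langle 1, \phi_P\rangle \;=\; \langle \widetilde{\mathcal{L}}[\phi]h, \phi_P\rangle \;=\; \langle h, \widetilde{\mathcal{L}}[\phi]\phi_P\rangle \;=\; c_P\langle h, \phi\rangle,
\]
whereas averaging $\widetilde{\mathcal{L}}[\phi]h = 1$ forces $\langle\phi, h\rangle = T \neq 0$; these are incompatible provided $c_P \neq 0$, establishing $\ker\mathcal{L}^\dagger[\phi] = \mathrm{span}\{\partial_\eta^{-1}\phi\}$.

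The claim for $\mathcal{L}[\phi]$ then follows by Fredholm duality: because $\mathcal{L}[\phi]$ is a fourth-order elliptic operator with periodic coefficients (recall $\beta \neq 0$), it is Fredholm of index zero on $L^2_{\rm per}(0,T)$, so $\dim\ker\mathcal{L}[\phi] = \dim\ker\mathcal{L}^\dagger[\phi] = 1$, and since $\phi' \in \ker\mathcal{L}[\phi]$ we conclude $\ker\mathcal{L}[\phi] = \mathrm{span}\{\phi'\}$. The main obstacle I anticipate is executing the ``$\langle f\rangle \neq 0$'' step cleanly: one must exploit the parameter derivative $\phi_P$ from Assumption~\ref{a:soln} as the right test function detecting whether nonzero constants lie in the range of $\widetilde{\mathcal{L}}[\phi]$ restricted to mean-zero inputs, and verify that the required non-degeneracy $c_P \neq 0$ is genuinely available in the setting of interest (consistent with the smooth $(k,P)$ parameterization and the explicit Stokes-wave computation of Section~\ref{s:existence}).
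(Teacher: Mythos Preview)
Your argument is essentially correct but takes a considerably more circuitous route than the paper, and the detour creates a dependency that you yourself flag as a concern.

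The paper proceeds in the opposite order. It first characterizes $\ker\mathcal{L}[\phi]$ directly: if $\mathcal{L}[\phi]f=0$ then integrating forces $\int_0^T f\,d\eta=0$, and on mean-zero functions one has $\mathcal{L}[\phi]f=-\partial_\eta^2\widetilde{\mathcal{L}}[\phi]f$, so Assumption~\ref{a:ker} gives $f\in\mathrm{span}\{\phi'\}$ immediately. For $\ker\mathcal{L}^\dagger[\phi]$, the paper differentiates $\mathcal{L}^\dagger[\phi]g=0$ twice to obtain $\mathcal{L}[\phi]g''=0$ (a one-line computation), whence $g''=\alpha\phi'$ by the first part; integrating twice and invoking the elementary observation $\mathcal{L}^\dagger[\phi]\,1=\gamma\neq 0$ kills the constant of integration. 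No Fredholm index argument, no parameter derivatives, no $c_P$.

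By contrast, your proof hinges on $c_P\neq 0$ to exclude the $\langle f\rangle\neq 0$ case. In the paper this fact appears only as a \emph{corollary of Lemma~\ref{L:non-deg}}: its proof reads ``if $c_P=0$ then $\phi_P$ would be a second element of $\ker\mathcal{L}[\phi]$, contradicting the lemma.'' Using $c_P\neq 0$ inside the proof of the lemma therefore risks circularity. You could try to close this loop by arguing $c_P\neq 0$ directly from Assumption~\ref{a:ker} (whether that works depends on whether $\widetilde{\mathcal{L}}[\phi]$ is taken as mapping mean-zero to mean-zero or to all of $L^2_{\rm per}$), but it is entirely unnecessary: the single observation $\mathcal{L}^\dagger[\phi]\,1=\gamma$ replaces your whole $\phi_P$-test-function computation.
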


\begin{proof}
To characterize the kernel of $\mathcal{L}[\phi]$ we observe that  if $f\in L^2_{\rm per}(0,T)$ is such that $\mathcal{L}[\phi]f=0$, 
then we necessarily have $\int_0^Tf~d\eta=0$ and hence\footnote{Note that $f$ being mean-zero
implies that it is in the domain of $\widetilde{\mathcal{L}}[\phi]$.}
\[
0=\partial_\eta^{-2}\mathcal{L}[\phi]f=\widetilde{\mathcal{L}}[\phi]f.
\]
By Assumption \eqref{a:ker}, it follows that $f$ is proportional to $\phi'$, as desired.

Similarly, to characterize the kernel of the adjoint we suppose that $g\in L^2_{\rm per}(0,T)$ is such that $\mathcal{L}^\dag[\phi]g=0$, i.e. that
\[
\gamma g +\left(c-\phi-\beta \partial_\eta^2\right)g''=0.
\]
Differentiating the above twice yields
\[
0=\partial_\eta^2\mathcal{L}^\dag[\phi]g = -\mathcal{L}[\phi]g'',
\]
where the last equality follows by a direct calculation.
By our characterization of the kernel of $\mathcal{L}[\phi]$ above, it follows that $g''=\alpha\phi'$ for some $\alpha\in\RM$.  Noting that $\mathcal{L}^\dag[\phi]1=\gamma\neq 0$, it 
follows by integrating twice that $g=\alpha\partial_\eta^{-1}\phi$, as claimed.
\end{proof}

Finally, we note a technical result following from Lemma \ref{L:non-deg}, which will be used in several places in our forthcoming analysis.

\begin{corollary}
Under the hypotheses of Lemma \ref{L:non-deg} we have $\frac{\partial c}{\partial P}\neq 0$.
\end{corollary}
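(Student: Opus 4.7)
The plan is to exploit the non-degeneracy Lemma \ref{L:non-deg} together with the momentum normalization $2P=\int_0^{1/k}\phi^2\,d\eta$. The argument proceeds by differentiating the profile equation with respect to $P$ at fixed $k$ to obtain a linear equation for $\phi_P:=\partial\phi/\partial P$, then showing that $c_P=0$ would force $\phi_P$ to lie in the kernel of $\mathcal{L}[\phi]$, and finally ruling this out via the normalization.

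Concretely, I would first differentiate the profile equation $(-c\phi'+\phi\phi'+\beta\phi''')'=\gamma\phi$ with respect to $P$ at fixed $k$, so that the period $1/k$ and therefore the underlying function space are held fixed. After expanding the derivatives and grouping terms, a short calculation should show that the result rearranges into
\[
\mathcal{L}[\phi]\phi_P=-c_P\,\phi'',
\]
with $\mathcal{L}[\phi]$ given by \eqref{e:L_1}. The non-obvious piece here is that the contribution from differentiating the nonlinearity $\phi\phi'$ combines with $-c\phi'$ and $\beta\phi'''$ under the outer $\partial_\eta$ to reproduce $\partial_\eta^2[(c-\phi-\beta\partial_\eta^2)\phi_P]$ modulo $\gamma\phi_P$, leaving only the source $-c_P\phi''$ coming from the explicit dependence of $c$ on $P$.

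With this identity in hand, suppose for contradiction that $c_P=0$. Then $\mathcal{L}[\phi]\phi_P=0$, so by Lemma \ref{L:non-deg} we have $\phi_P=\alpha\phi'$ for some $\alpha\in\RM$. On the other hand, differentiating the momentum normalization $2P=\int_0^{1/k}\phi(\eta;k,P)^2\,d\eta$ with respect to $P$ at fixed $k$ yields $\int_0^{1/k}\phi\phi_P\,d\eta=1$, while periodicity of $\phi$ gives $\int_0^{1/k}\phi\phi'\,d\eta=\tfrac{1}{2}\bigl[\phi^2\bigr]_0^{1/k}=0$. Substituting $\phi_P=\alpha\phi'$ into the normalization relation produces $0=1$, a contradiction. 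Hence $c_P\neq 0$.

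The only real obstacle is the verification of the identity $\mathcal{L}[\phi]\phi_P=-c_P\phi''$; everything else is an immediate consequence of Lemma \ref{L:non-deg} combined with the elementary periodicity observation $\int_0^{1/k}\phi\phi'\,d\eta=0$. I expect the bookkeeping in that derivation to be straightforward but should be done carefully because the profile equation mixes the quadratic nonlinearity with the linear dispersive terms inside the outer derivative.
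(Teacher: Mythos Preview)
Your proposal is correct and follows essentially the same approach as the paper: differentiate the profile equation in $P$ to obtain $\mathcal{L}[\phi]\phi_P=-c_P\phi''$ (the paper works with the rescaled profile equation \eqref{e:profile_stokes}, picking up an extra factor of $k^2$, but this is immaterial), and then argue that $c_P=0$ would force $\phi_P$ into the kernel of $\mathcal{L}[\phi]$, contradicting Lemma~\ref{L:non-deg}. The paper simply asserts that $\phi_P$ would be a ``second linearly independent element'' of the kernel, whereas you make this explicit via the momentum normalization $\int\phi\phi_P\,d\eta=1$ versus $\int\phi\phi'\,d\eta=0$; this is a welcome clarification of a step the paper leaves implicit.
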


\begin{proof} 
Note that differentiating the profile equation \eqref{e:profile_stokes} with respect to $P$, keeping in mind that both the momentum $P$ and the frequency $k$ are independent variables
thanks to Assumption \ref{a:soln}, it follows that $\mathcal{L}[\phi]\phi_P=-k^2 c_P\phi''$.  Consequently, if $\frac{\partial c}{\partial P}= 0$ then
there would necessarily be a  second linearly independent element in the kernel of $\mathcal{L}[\phi]$, which is prohibited
under Assumption \ref{a:ker} by Lemma \ref{L:non-deg}.  It follows that under the hypotheses of Lemma \ref{L:non-deg} that we must have $\frac{\partial c}{\partial P}\neq 0$,
as claimed.
\end{proof}

\section{The Whitham Modulation Equations}
\label{s:Whitham}

In this section, we begin our study of the large-space/long-time dynamics of an arbitrary amplitude, slowly modulated periodic traveling wave solution of the Ostrovsky equation \eqref{e:ostrov}. A formal approach to study the dynamical behavior of such slowly modulated periodic traveling waves is to use multiple-scales / WKB methods to derive a 
dispersionless system of quasilinear PDEs, referred to as the Whitham modulation equations, 
that govern the slow evolution of the fundamental wave characteristics.\footnote{Here, as per our existence
theory above, the frequency $k$ and momentum $P$ of the underlying wave train.}   We note that while Whitham originally formulated this approach
in terms of averaged conservation laws, it was later shown to be equivalent to an asymptotic WKB reduction: see \cite{L66,W65,W74}.  Further, we find that the derivation through
multiple scales more directly aligns with our upcoming rigorous theory, which utilizes spectral perturbation theory to project the infinite dimensional (rigorous) spectral
 stability problem onto an appropriate finite-dimensional critical subspace.

We note that in \cite{WJ17} the authors derived and studied the Whitham modulation
equations for the Ostrovsky equation in a particular scaling of the weak-rotation limit $0<\gamma\ll 1$.  In this particular limit, the resulting averaged system is effectively a 
forced version of the Whitham modulation equations for the limiting KdV equation.  In the present work, we consider arbitrary $\gamma\neq 0$ which, as we will
see, changes the nature of the modulation system considerably.  For more details and discussion on this, see Remark \ref{r:WJ} below.

\subsection{Whitham Theory for the Ostrovsky Equation}\label{s:whitham_derivation}

To provide an asymptotic description of the slow modulation of periodic wave trains for the Ostrovsky equation \eqref{e:ostrov}, which occurs
at large space and time scales when compared to the local spatial and temporal scales of the wave, we begin by introducing  ``slow"  variables 
$(X,S)=(\eps x,\eps t)$ where here $0<|\eps|\ll 1$.  In the slow variables, the Ostrovsky equation \eqref{e:ostrov} becomes
\be
\label{e:ostrov_slow}
\eps^2u_{SX} + \eps^2(u_X)^2 + \eps^2uu_{XX} + \eps^4\beta u_{XXXX} = \gamma u.
\ee
Following Whitham's general method \cite{W74}, we seek 
a multiple-scales solution to \eqref{e:ostrov_slow} of the form 
\be
\label{e:W_expansion}
u(X,S;\eps) = u^0(X,S,\eps^{-1}\psi(X,S)) + \eps u^1(X,S,\eps^{-1}\psi(X,S)) + \mathcal{O}(\eps^2),
\ee 
where here the phase $\psi$ is chosen so that the $u^j$ are $1$-periodic functions of the third (fast) coordinate $\theta = \eps^{-1}\psi(X,S)$.
Substituting the expansion \eqref{e:W_expansion} into \eqref{e:ostrov_slow} and collecting like power of $\eps$ yields a hierarchy of equations in algebraic orders of $\eps$ that
must all be simultaneously satisfied.  Importantly, note that substituting \eqref{e:W_expansion} into \eqref{e:ostrov_slow} and integrating over a period in $\theta$ yields
\[
\int_0^1u^j(X,S,\theta)d\theta=0,~~j=0,1,2,\ldots
\]
and hence the infinite hierachy of equations must necessarily be solved in the mean-zero subspace of $L^2_{\rm per}(0,1)$.

At leading order, occurring at $\mathcal{O}(1)$, we obtain
\be
\label{e:O(1)}
\psi_S\psi_Xu^0_{\theta\theta} + \psi_X^2(u^0_{\theta})^2 + \psi_X^2u^0u^0_{\theta\theta} + \beta \psi_X^4u_{\theta\theta\theta\theta}^0 = \gamma u^0.
\ee 
Setting $k=\psi_X$ and $\omega=-\psi_S$ as the spatial and temporal frequencies of the modulation, respectively, and $c=\omega/k$ as the wave speed, the above is immediately recognized as the profile equation \eqref{e:profile}, rescaled for $1$-periodic functions. Note that, here, the $k$, $\omega$ and $c$ are now slowly evolving functions
of space and time.  Consequently, for a fixed $X$ and $S$ we may choose $u^0$ to be a periodic traveling wave solution of \eqref{e:ostrov}, hence of the form
\[
u^0(X,S,\theta)=\phi\left(\theta;k(X,S),P(X,S)\right)
\]
for some even solution $\phi$ of \eqref{e:profile}, again rescaled for $1$-periodic solutions.\footnote{As per our existence theory, we are assuming periodic solutions
of \eqref{e:profile} form a 2-parameter family parameterized by the frequency $k$ and the momentum $P$.  The wave speed $c$ and temporal frequency
$\omega$ above thus depend slowly through the evolution of $k$ and $P$.}  Further, the 
consistency condition $\psi_{XS}=\psi_{SX}$ implies the local wave number $k$ and the wave speed $c$ slowly evolve according
to the relation
\be
\label{e:consv_waves}
k_S + (kc)_X = 0,
\ee
which is often referred to as ``conservation of waves."

Continuing to study the above hierarchy of equations, at $\mathcal{O}(\eps)$ we find
\begin{multline*}
\psi_Xu^0_{S\theta}+\psi_{SX}u^0_{\theta}+\psi_Su^0_{\theta X}+\psi_S\psi_Xu^1_{\theta\theta}+2\psi_Xu^0_{\theta}u^0_X+2\psi_X^2u^0_{\theta}u^1_{\theta}+2\psi_Xu^0u^0_{X\theta}+\psi_{XX}u^0u^0_{\theta}
\\
+\psi_X^2u^0u^1_{\theta\theta}+\psi_X^2u^0_{\theta\theta}u^1+4\beta \psi_X^3u^0_{X\theta\theta\theta}+6\beta \psi_X^2\psi_{XX}u^0_{\theta\theta\theta}+\beta\psi_X^4u^1_{\theta\theta\theta\theta} = \gamma u^1.
\end{multline*}
Using our above identifications, the above can be rewritten as
\be
\label{e:O(eps)}
\mathcal{L}[u^0]u^1 = k\d_{\theta}u^0_S + F(u^0),
\ee
where here $\mathcal{L}[u^0]$ is the linear differential operator\footnote{Note that $\mathcal{L}[\phi]$ is the same operator defined in \eqref{e:L_1} but rescaled for $1$-periodic functions.} 
\begin{equation}\label{e:L0}
\mathcal{L}[u^0]:=\gamma+k^2\partial_\theta^2\left(c-u^0-\beta k^2\partial_\theta^2\right),
\end{equation}
equipped with $1$-periodic boundary conditions, and 
\[
F(u^0):= -(kcu^0_{\theta})_X+2ku^0_{\theta}u^0_X+2ku^0u_{X\theta}^0+k_Xu^0u^0_{\theta}+4\beta k^3u^0_{X\theta\theta\theta}+6\beta k^2k_Xu^0_{\theta\theta\theta}. 
\] 
contains all the nonlinear terms in $u^0$ and its derivatives.  
Viewing \eqref{e:O(eps)} as a forced linear equation in the vector space $L^2_{\rm per}(0,1)$ for the unknown $u^1$,
it follows by the Fredholm alternative that \eqref{e:O(eps)} is solvable 
in $L^2_{\rm per}(0,1)$ if and only if
\[
k\d_{\theta}u^0_S+F(u^0)\perp \ker_{L^2_{\rm per}(0,1)}\left(\mathcal{L}^\dag[u^0]\right).
\]  
From Lemma \ref{L:non-deg}, it follows that \eqref{e:O(eps)} is solvable provided that the single orthogonality condition
\be
\label{e:solvability} 
\left< \frac{1}{k}\d_{\theta}^{-1}u^0, k\d_{\theta}u^0_S+F(u^0) \right> = 0
\ee
is satisfied, where here and throughout $\left<\cdot,\cdot\right> := \left<\cdot,\cdot\right>_{L^2_{\rm per}(0,1)}$. 
Noting that
\[
\left<\frac{1}{k}\d_{\theta}^{-1}u^0,k\d_{\theta}u^0_S\right> = \left<u^0,-u_S^0\right> 
=-P_S,
\]
the above solvability condition is equivalent to the slow evolution equation
\begin{equation}\label{e:P_slow1}
P_S=\left< \frac{1}{k}\d_{\theta}^{-1}u^0, F(u^0) \right>
\end{equation}
for the momentum $P$. 

To write the right-hand-side of \eqref{e:P_slow1} in conservative form, first note that by integrating the rescaled 
profile equation \eqref{e:O(1)} and differentiating with respect to the slow-coordinate $X$ yields
\begin{align*}
-(kcu^0_{\theta})_X&= -k_Xu^0u^0_{\theta} - ku^0_Xu^0_{\theta} - ku^0u^0_{\theta X} - 3\beta k^2k_Xu^0_{\theta\theta\theta} - \beta k^3u^0_{\theta\theta\theta X}\\
&\qquad - \frac{\gamma}{k^2}k_X\d_{\theta}^{-1}u^0 + \frac{\gamma}{k}(\d_{\theta}^{-1}u^0)_X.
\end{align*}
Substituting this into the definition of $F(u^0)$ and simplifying gives  
\begin{multline*}
\left< \frac{1}{k}\d_{\theta}^{-1}u^0,F(u^0)\right> 
\\
= \left<\frac{1}{k}\d_{\theta}^{-1}u^0, ku^0_Xu^0_{\theta} + ku^0u^0_{\theta X} + 3\beta k^2k_Xu^0_{\theta\theta\theta} + 3\beta k^3u^0_{\theta\theta\theta X} - \frac{\gamma}{k^2}k_X\d_{\theta}^{-1}u^0 + \frac{\gamma}{k}(\d_{\theta}^{-1}u^0)_X\right>. 
\end{multline*} 
Using integration by parts, we find that
\[
\left<\frac{1}{k}\d_{\theta}^{-1}u^0, ku^0_Xu^0_{\theta}+ku^0u^0_{\theta X}\right> = \left<1, -(u^0)^2u^0_X\right> = \left<1, -\frac{1}{3}(u^0)^3\right>_X
\]
and, similarly,
\begin{align*}
\left<\frac{1}{k}\d_{\theta}^{-1}u^0, 3\beta k^2k_Xu^0_{\theta\theta\theta} + 3\beta k^3u^0_{\theta\theta\theta X}\right> &= \left<1, 3\beta kk_X(u^0_{\theta})^2 + 3\beta k^2u^0_{\theta}u^0_{\theta X}\right> \\
&= \left<1, \frac{3}{2}\beta k^2(u^0_{\theta})^2\right>_X.
\end{align*}
Finally, we also find that
\begin{align*}
\left<\frac{1}{k}\d_{\theta}^{-1}u^0, -\frac{\gamma}{k^2}k_X\d_{\theta}^{-1}u^0 + \frac{\gamma}{k}(\d_{\theta}^{-1}u^0)_X\right> &= \left<1, -\frac{\gamma}{k^3}k_X(\d_{\theta}^{-1})^2u^0 + \frac{\gamma}{k^2}\d_{\theta}^{-1}u^0(\d_{\theta}^{-1}u^0)_X\right> \\
&= \left<1, \frac{\gamma}{2k^2}(\d_{\theta}^{-1}u^0)^2\right>_X,
\end{align*}
and hence the slow evolution equation \eqref{e:P_slow1} can be rewritten in conservative form as
\begin{equation}\label{e:P_slow2}
P_S= \left<1,-\frac{1}{3}(u^0)^3+\frac{3}{2}\beta k^2(u_{\theta}^0)^2+\frac{\gamma}{2k^2}(\d_{\theta}^{-1}u^0)^2\right>_X.
\end{equation}

Taken together,  the consistency condition \eqref{e:consv_waves} and the solvability condition \eqref{e:solvability} gives the system of equations
\begin{equation}\label{e:Wsystem}
\left\{\begin{aligned}
k_S &= -(kc)_X,\\
P_S &= \left<1, -\frac{1}{3}(u^0)^3 + \frac{3}{2}\beta k^2(u^0_{\theta})^2 + \frac{\gamma}{2k^2}(\d_{\theta}^{-1}u^0)^2\right>_X,
\end{aligned}\right.
\end{equation}
which are the Whitham modulation equations for the Ostrovsky equation \eqref{e:ostrov}.  This system is expected to govern (at least to leading order) 
the slow evolution of the wave number $k$ and the momentum $P$ of a slow modulation of the underlying periodic traveling wave $u^0$.  
In particular, if $\phi_0(\cdot;k_0,P_0)$ is a $1/k$-periodic traveling wave solution of  \eqref{e:profile}, then the above multiple-scales analysis suggests that, at least
for short time, the Ostrovsky equation admits a slowly modulated periodic wave train solution of the form
\[
u(x,t;\eps)=\phi\left(\eps^{-1}\psi(\eps x,\eps t);k(\eps x,\eps t),P(\eps x,\eps t)\right)+\mathcal{O}(\eps)
\]
for $|\eps|\ll 1$, where the $k(\eps x,\eps t)$ and $P(\eps x,\eps t)$ are slowly varying functions about $k_0$ and $P_0$, respectively, that evolve
according to the Whitham modualtion system \eqref{e:Wsystem}.  As such, the stability of $\phi_0(\cdot;k_0,P_0)$ to slow modulations is expected to be
determined from the linearization of \eqref{e:Wsystem} about $\phi_0$.
Specifically, using the chain rule we rewrite the Whitham system \eqref{e:Wsystem} in the quasilinear form
\be
\label{e:quasilin}
\begin{pmatrix} k \\ P \end{pmatrix}_S = \mathcal{W}(u^0)\begin{pmatrix} k \\ P \end{pmatrix}_X,
\ee
where 
\be
\label{e:Wmat}
\mathcal{W}(u) = 
\left(\begin{array}{cc}-(kc)_k & -(kc)_P \\ \left<1,-\frac{1}{3}u^3+\frac{3}{2}\beta k^2u_{\theta}^2 + \frac{\gamma}{2k^2}(\d_{\theta}^{-1}u)^2\right>_k & \left<1,-\frac{1}{3}u^3+\frac{3}{2}\beta k^2u_{\theta}^2 + \frac{\gamma}{2k^2}(\d_{\theta}^{-1}u)^2\right>_P 
\end{array}\right).
\ee
it is natural to expect the stability of $\phi_0$ to slow modulations should be determined by the eigenvalues of the matrix $\mathcal{W}(\phi_0)$.  
Indeed, noting that $\phi_0$ is an equilibrium solution\footnote{Note that $\phi_0$ is a constant with respect to the slow variables $(X,S)$.} of \eqref{e:Wsystem},
linearizing \eqref{e:Wsystem} about $\phi_0$ yields a constant-coefficient linear evolution equation for $(k,P)$ whose eigenvalues will be of the form
\[
\nu_j(\xi)=i\xi\alpha_j,~~j=1,2
\]
where the $\{\alpha_j\}_{j=1}^2$ are precisely the eigenvalues of the matrix $\mathcal{W}(\phi_0)$ and $\xi\in\RM$ is a parameter.  It follows
that if the Whitham system is weakly hyperbolic at $\phi^0$, meaning the eigenvalues of $\mathcal{W}(\phi_0)$ are both real, then the eigenvalues of the linearization 
of \eqref{e:quasilin} are purely imaginary, indicating
a marginal (spectral) stability.   Conversely, if the Whitham system is elliptic at $\phi_0$, meaning the eigenvalues of $\mathcal{W}(\phi_0)$ have non-zero imaginary parts, 
then the linearization of \eqref{e:quasilin} about $\phi_0$ has eigenvalues with positive real part, indicating (spectral) instability of $\phi_0$.

\

The main goal of this work is to rigorously validate the above predictions.  Specifically, we will demonstrate that a necessary condition for a given periodic
traveling wave $\phi_0(\cdot;k_0,P_0)$ to be spectrally stable is the weak-hyperbolicity of the Whitham modulation system \eqref{e:Wsystem} at $(k_0,P_0)$,
while ellipticity of \eqref{e:Wsystem} at $(k_0,P_0)$ is sufficient to conclude spectral instability of $\phi_0$.

Before continuing, we make two important remarks.

\begin{remark}\label{r:re-param}
It is important to note that Whitham's theory of modulations naturally describes the slow modulation of a periodic traveling wave solution
in terms of the slow evolution of the spatial frequency and conserved quantities of the background wave (in this case, the momentum).
This makes it natural to describe the manifold of periodic traveling wave solutions to the Ostrovsky equation in terms of $k$ and $P$,
which is effectively the content of Assumption \ref{a:soln}.  Of course, it is natural to consider what would happen if a different parameterization
for the manifold of solutions is used.

Observe that if one assumes the periodic solutions of \eqref{e:profile} are parameterized by two parameters $(a_1,a_2)\in\RM$ via  $\phi=\phi(\cdot;a_1,a_2)$,
with period $T=T(a_1,a_2)$ and wave speed $c=c(a_1,a_2)$, then the Whitham system \eqref{e:quasilin} can be rewritten 
as a slow evolution equation for the parameters $a_j$ via
\[
\left(\begin{array}{c}a_1\\a_2\end{array}\right)_S
=\left(\frac{\partial(k,P)}{\partial(a_1,a_2)}\right)^{-1}\mathcal{W}(u^0)\left(\frac{\partial(k,P)}{\partial(a_1,a_2)}\right)
	\left(\begin{array}{c}a_1\\a_2\end{array}\right)_X.
\]
In particular, the above only defines a slow evolution equation for $(a_1,a_2)$ provided that the Jacobian $\frac{\partial(k,P)}{\partial(a_1,a_2)}$
is non-singular, i.e. if and only if the waves $\phi(\cdot;a_1,a_2)$ can, in fact, be smoothly reparameterized in terms
of the frequency $k$ and the momentum $P$.  In this way, we see that Assumption \ref{a:soln} is effectively \emph{required} in order
for the Whitham modulation system to describe a slow evolutionary equation on the manifold of periodic traveling wave solutions to the Ostrovsky equation.
See also Remark \ref{r:exist_assumption} above.  Further, the above calculation shows that the hyperbolicity or ellipticity of the resulting Whitham
system is independent of how one parameterizes the manifold of periodic traveling wave solutions.
\end{remark}

\begin{remark}\label{r:WJ}
It should be emphasized that our application of Whitham's theory of modulations to the Ostrovsky equation
\eqref{e:ostrov} is conducted for general, fixed values of $\gamma$.  
In \cite{WJ17}, Whitfield \& Johnson present a derivation of the Whitham modulation system for the Ostrovsky equation
in a particular weak rotation limit $0<\gamma\ll 1$.  In particular, using language and notation from our work above, the authors
in \cite{WJ17} make the additional assumption in \eqref{e:ostrov_slow} that $\gamma=\mathcal{O}(\eps^2)$.  The effect
of this additional restriction is substantial: most notably, the modulation system derived in \cite{WJ17} is three-dimensional,
describing the evolution of three slowly modulating parameters defining the underlying periodic traveling wave solutions.
Below, we outline the main differences between our derivation and that in \cite{WJ17}.

To compare our results, note that taking $\gamma=\mathcal{O}(\eps^2)$ in the slow evolution equation \eqref{e:ostrov_slow} above
implies that the $\mathcal{O}(1)$ equation in \eqref{e:O(1)} becomes 
\[
\psi_S\psi_Xu^0_{\theta\theta} + \psi_X^2(u^0_{\theta})^2 + \psi_X^2u^0u^0_{\theta\theta} + \beta \psi_X^4u_{\theta\theta\theta\theta}^0 = 0
\]
which, under the standard identifications $k=\psi_X$, $\omega=-\psi_S$ and $c=\omega/k$ as above, is directly integrated
to yield the traveling wave profile equation for the classical (integrable) KdV equation.  The KdV equation is known
to admit a three parameter family of periodic traveling wave solutions smoothly parameterized by the frequency $k$, the
momentum $P$, and the mass $M=\int_0^1 u^0(\theta)d\theta$.   Continuing in the asymptotic expansion,
the forced linear equation \eqref{e:O(eps)} for the unknown $u^1$ remains structurally the same, except that the linear operator $\mathcal{L}[u^0]$
is replaced by the operator
\[
\mathcal{A}[u^0]:=k^2\partial_\theta^2\left(c-u^0-\beta k^2\partial_\theta^2\right),
\]
and the forcing term $F(u^0)$ is modified appropriately.  Importantly, the operator $\mathcal{A}[u^0]$ is recognized
precisely as the derivative of the lineraized KdV operator, and hence its adjoint is known to have a two-dimensional
kernel spanned by the constant function $1$ and the function $\partial_\theta^{-1}u^0$.  The presence of the additional
adjoint eigenfunction, namely the (non mean-zero) constant function $1$, directly accounts for the presence of the additional dimension
in the Whitham modulation system derived in \cite[Section 2]{WJ17} compared to our work above.

As stressed above, our derivation in Section \ref{s:whitham_derivation} holds for general fixed $\gamma\neq 0$, while the derivation
in \cite{WJ17} is conducted in the asymptotic (weak-rotation) limit $\gamma=\mathcal{O}(\eps^2)$.  
Most importantly, our work shows that for a \emph{fixed} value of $\gamma\neq 0$ that the 
associated Whitham modulation system is necessarily only two-dimensional.
\end{remark}

\subsection{Stokes Wave Analysis: Whitham Theory \& Lighthill's Criteria}\label{s:stokes}

Evaluating the eigenvalues of the linearized Whitham matrix \eqref{e:Wmat} is incredibly difficult, if not impossible, to do analytically for general
periodic traveling wave solutions of the Ostrovsky equation.  
However, the calculations are relatively straight forward in the 
special case where $\phi$ is a Stokes wave, i.e. when $\phi$ is a periodic
traveling wave of \eqref{e:ostrov} with asymptotically small amplitude of oscillation.  Here we 
study the hyperbolicity/ellipticity of the Whitham system \eqref{e:Wsystem} when applied to the Stokes wave solutions
for the Ostrovsky equation \eqref{e:ostrov} and connect to other previous works regarding the spectral stability of such waves.
In particular, in this section we demonstrate that the Whitham system derived above will indeed be (strictly) hyperbolic at some waves while it 
will be elliptic at others.  See Remark \ref{r:stokes} as well.

As discussed in Section \ref{s:existence}, using a standard Lyapunov-Schmidt reduction it was proven in \cite{BJP24} that 
for each fixed $k>0$ there exists a one-parameter family of smooth solutions of \eqref{e:profile} of the form
$u(x,t)=\phi\left(k(x-c(a,k)t);a,k\right)$ for $a\in\RM$ with $|a|\ll 1$, where the $\phi(\cdot;a,k)$ are $1$-periodic,
even, and smooth in its argument, and $c(a,k)$ is even in $a$.  Further, both $\phi(\cdot;a,k)$ and $c(a,k)$
depend analytically on $a$ and $k$, and admit asymptotic expansions as in \eqref{e:stokes_waves}.
Substituting the  expansions \eqref{e:stokes_waves} into the Whitham modulation system \eqref{e:Wsystem} yields 
\begin{equation}\label{e:W_Stokes}
\left\{\begin{aligned}
	&k_S+\left(\omega_0+a^2 \omega_2\right)_X=\mathcal{O}(a^3)\\
	&\left(\frac{1}{4}a^2\right)_S=\left(\frac{48\beta k^4\pi^4+\gamma}{16k^2\pi^2}a^2\right)_X + \mathcal{O}(a^3)
	\end{aligned}\right.
\end{equation}
where here the temporal frequency\footnote{Note this identification is consistent with those made in our application of Whitham theory.  See the definitions
of the slowly varying temporal frequency $\omega$ and wave speed $c$ directly below \eqref{e:O(1)}.} $\omega$ and its expansion are given by
\begin{equation}\label{e:tfreq}
\omega(a,k)=kc(a,k)=\omega_0(k)+a^2\omega_2(k)+\mathcal{O}(a^4).
\end{equation}
Using the chain rule, the system \eqref{e:W_Stokes} can be rewritten as the quasilinear system
\[
\left(\begin{array}{c}k\\a\end{array}\right)_S = B(k,a)\left(\begin{array}{c}k\\a\end{array}\right)_X
\]
where here
\[
B(k,a)=\left(\begin{array}{cc}
					-\frac{\partial\omega_0}{\partial k} & 0\\
					0 & -\frac{\partial\omega_0}{\partial k}
				\end{array}\right)
				+
				a\left(\begin{array}{cc}
					0 & -2\omega_2\\
					 -\frac{1}{2}\frac{\partial^2\omega_0}{\partial k^2} & 0
				\end{array}\right)
				+\mathcal{O}(a^2).
\]
A straightforward computation shows that the eigenvalues of the linearized Whitham matrix $B(k,a)$ are given explicitly by
\[
\lambda_{\pm}(k,a)=-\frac{\partial\omega_0}{\partial k}\pm a\sqrt{\omega_2\frac{\partial^2\omega_0}{\partial k^2} }+\mathcal{O}(a^2).
\]
and hence the Whitham system about a $1/k$-periodic Stokes wave is hyperbolic (suggestive of modulational stability) when $\omega_2(k)\omega_0''(k)>0$,
while it is elliptic (suggestive of modulational instability) when $\omega_2(k)\omega_0''(k)<0$.

\begin{remark}\label{r:stokes2}
Note that, explicitly, we have
\[
\omega_0''(k)\omega_2(k)=\frac{1}{3}\left(\frac{\gamma-48\beta\pi^4k^4}{\gamma+64\beta\pi^4k^4}\right).
\]
For both $\beta>0$ and $\beta<0$, there exists a crticial frequency cut-off $k_c=k_c(\gamma,\beta)$ such that
the Whitham system \eqref{e:Wsystem} is hyperbolic for $0<k<k_c$ and elliptic for $k>k_c$.  We note that for both signs of $\beta$,
the critical frequency scales like
\[
k_c\sim\left(\frac{\gamma}{|\beta|}\right)^{1/4}.
\]
In particular,  we note that\footnote{Said differently, formally taking $\beta=0$ gives $\omega_0''(k)\omega_2(k)=1/3$, indicating stability for all frequencies $k$.} 
$k_c\to\infty$ as $\beta\to 0$, consistent with the work \cite{JP16} which shows that all Stokes waves
are nonlinearly orbitally stable to a wide class of periodic perturbations.\footnote{In particular, they are shown to be stable to all subharmonic perturbations.}
On the other hand, $k_c\to 0^+$ as $\gamma\to 0^+$, which of course is not consistent with the known modulational stability of Stokes waves (and, in fact, all periodic
traveling waves) in the limiting KdV equation.\footnote{Said differently, formally taking $\gamma=0$ yields $\omega_0''(k)\omega_2(k)=-\frac{1}{4}$, indicating instability.  Of course, it is
well known that all periodic traveling waves of the limiting KdV equation are stable to a wide class of perturbations: see \cite{J09,ND10} and references therein.}
\end{remark}

As discussed in detail in \cite{BJP24}, the above asymptotics recover the well-known Lighthill
criteria for modulational instability of small-amplitude, periodic wave trains of the Ostrovsky equation.  
Indeed, using formal asymptotc methods, it was shown in \cite{GH08,GSA16} that if $u$ is a small amplitude, weakly nonlinear periodic solution of \eqref{e:ostrov} of the form\footnote{Here, c.c.
denotes the complex conjugate of the previous term.}
\[
u(x,t)=\left(Ae^{2\pi i\theta}+c.c.\right) + \left(A_2 e^{4\pi i\theta}+c.c.\right)+\ldots,
\]
where $\theta=kx-\widetilde{\omega}(|A|,k)t$ and $\widetilde{\omega}(|A|,k)$ denotes the nonlinear dispersion relation with $|A|=\eps\ll 1$ is a slowly varying function
 of space and time with $A_2=\mathcal{O}(\eps^2)$, then in the moving frame $z=x-c_g(k)t$ the leading order $A$ term satisfies the nonlinear Schr\"odinger equation
 \begin{equation}\label{e:NLS}
 iA_t+\frac{1}{2}\omega_0''(k)A_{zz}-\omega_2(k)|A|^2A=0,
 \end{equation}
where here $\omega_0(k)$ and $\omega_2(k)$ are as in \eqref{e:tfreq} and are related to $\widetilde{\omega}(|A|,k)$ through the expansion
\[
\widetilde{\omega}(|A|,k)=\omega(k)+\eps^2\omega_2(k)+\mathcal{O}(\eps^4),~~\eps\to 0.
\]
The Lighthill criteria states that such a weakly nonlinear wave train is modulationally unstable provided that the NLS equation \eqref{e:NLS} is focusing, i.e. provided
that
\[
\omega_0''(k)\omega_2(k)<0,
\]
while it is modulationally stable if \eqref{e:NLS} is defocusing, i.e. if $\omega_0''(k)\omega_2(k)>0$.

\

In summary, the Whitham theory for Stokes waves above agrees precisely with the Lighthill criteria coming from a separate, albeit also formal,
weakly nonlinear analysis.  In the recent work \cite{BJP24}, the authors studied the rigorous (spectral) modulational instability of 
Stokes waves for the Ostrovsky equation through the use of rigorous
spectral perturbation theory.  In that work, the formal Lighthill criteria was rigorously connected to the spectral stability of the underlying wave train,
showing that such waves are spectrally stable in a neighborhood of the origin in the spectral plane if $\omega_0''(k)\omega_2(k)>0$ while they
exhibit a modulational (i.e. side-band) spectral instability when $\omega_0''(k)\omega_2(k)<0$.
As we have just connected Whitham's theory of modulations in this case also to the Lighthill criteria, it follows that \cite{BJP24} (and more generally
our main result Theorem \ref{t:main}) provides a rigorous connection between Whitham's theory of modulations to the 
spectral stability of Stokes waves for the Ostrovsky equation.

Recalling from the introduction, the main goal of this work is to extend the above observation by rigorously connecting the stability predictions from Whitham's theory of modulations
to the rigorous spectral stability problem for \emph{all} periodic wave train solutions of the Ostrovsky equation \eqref{e:ostrov}, i.e. for periodic
traveling waves outside of the Stokes wave asymptotic regime.

\section{Rigorous Modulational Stability Theory}
\label{s:modtime}

In this section, we conduct a mathematically rigorous investigation into the spectral stability problem for periodic traveling wave solutions
of the Ostrovsky equation \eqref{e:ostrov}.  Throughout, we will focus on the stability to a particular subclass
of \emph{localized} (i.e. $L^2(\RM)$) perturbations on the line.  As a first step, we will discuss the required functional analytic
setup regarding linear operators with periodic coefficients and their spectrum when acting on $L^2(\RM)$.  We will, specifically, use Floquet-Bloch
theory to provide a full description of their $L^2(\RM)$-stability spectrum in terms of the periodic eigenvalues of a one-parameter family of 
(Bloch) linear operator pencils.
We will then begin our analysis by characterizing the co-periodic generalized kernel of the appropriate operator pencil.
Using functional analytic spectral perturbation theory in the context of linear operator pencils, we then 
rigorously obtain an asymptotic description of the $L^2(\RM)$-stability spectrum in a small neighborhood of the origin in the spectral plane.

\subsection{Linearization \& Spectral Decomposition}

Let $\phi(\cdot;k,P)$ be a $T=1/k$-periodic traveling wave solution of \eqref{e:ostrov} with momentum $P$.  
Throughout our work, we 
operate under Assumption \ref{a:soln}, and hence we will assume that (locally) the set of periodic traveling wave solutions
near $\phi$ are smoothly parameterized\footnote{Up to spatial translations.} by the frequency $k$ and the momentum $P$.
To emphasize the dependence on the frequency $k$, we rescale the spatial variable as $y=kx$, so that $T=1/k$ periodic 
traveling wave solutions of \eqref{e:ostrov} correspond to $1$-periodic traveling wave solutions of the rescaled evolution equation 
\be
\label{e:ostrov_scale}
k(u_t + kuu_y + \beta k^3u_{yyy})_y = \gamma u.
\ee
Introducing the traveling coordinate $\theta = k(x-ct)=y-\omega t$, where $\omega = kc$ is the temporal frequency, it follows 
that traveling wave solutions of \eqref{e:ostrov_scale} correspond to solutions of the form
\[
u(y,t) = \phi(y-\omega t),
\]
and hence the given periodic traveling wave $\phi(\theta)$ is necessarily a $1$-periodic equilibrium solution of the rescaled evolutionary equation
\be
\label{e:ostrov_station}
k(u_t - kcu_{\theta} + kuu_{\theta} + \beta k^3u_{\theta\theta\theta})_{\theta} = \gamma u.
\ee
In particular, $\phi$ is a $1$-periodic solution to the rescaled profile equation
\be
\label{e:profile_scale}
k^2(-c\phi' + \phi\phi' + \beta k^2\phi''')' = \gamma \phi,
\ee
where here and forward we use $'$ to denote differentiation with respect to $\theta$ and the wave speed is now considered to be a (smooth) function of $k$ and $P$.

To study the stability of $\phi$ to localized perturbations, we linearize \eqref{e:ostrov_station} about $\phi$ to arrive at the linear evolution equation
\[
k(v_t - kcv' + k(\phi v)' + \beta k^3v''')' = \gamma v
\]
governing the perturbation $v(\theta,t)$.  Seeking solutions of the form $v(\theta,t)=e^{\lambda t}\tilde{v}(\theta)$ with $\lambda\in\CM$ and $\tilde{v}\in L^2(\RM)$
leads to the spectral problem 
\begin{equation}\label{e:gen_ev1}
\mathcal{L}[\phi]\tilde{v}=\lambda k\partial_\theta\tilde{v}
\end{equation}
acting on $L^2(\RM)$, where here 
\[
\mathcal{L}[\phi]=\gamma+k^2\partial_\theta^2\left(c-\phi-\beta k^2\partial_\theta^2\right)
\]
is considered as a closed, densely defined linear operator on $L^2(\RM)$.  In particular, we note that the operator $\mathcal{L}[\phi]$ defined
above is the same as that already encountered in our derivation of the Whitham system (see Section \ref{s:Whitham}) and whose kernel
properties were studied in Lemma \ref{L:non-deg}.

As is standard, the periodic traveling wave $\phi$ is said 
to be spectrally stable with respect to square integrable perturbations if 
\[
\sigma_{L^2(\RM)}\left(\mathcal{L}[\phi];k\partial_\theta\right)\subset\left\{\lambda\in\CM:\Re(\lambda)\leq 0\right\}.
\]
Otherwise, the wave $\phi$ is said to be spectrally unstable to square integrable perturbations.  Note that since 
\eqref{e:gen_ev1} is invariant under the transformation $(\tilde{v},\lambda)\mapsto (\overline{\tilde{v}},\overline{\lambda})$, as well
as the transformation $(\theta,\lambda)\mapsto (-\theta,-\lambda)$, the spectrum associated with 
\eqref{e:gen_ev1} 
is necessarily
symmetric with respect to reflections about both the real and imaginary axes.  Consequently, $\phi$ is spectrally stable to square integrable
perturbations if and only if
\[
\sigma_{L^2(\RM)}\left(\mathcal{L}[\phi];k\partial_\theta\right)\subset i\RM.
\]

\begin{remark}
We note that since $\partial_\theta$ fails to be invertible on $L^2(\RM)$, due to the presence of essential spectrum along the entire imaginary axis,
the spectral problem \eqref{e:gen_ev1} can not be recast as a standard spectral problem for the 
operator\footnote{We hesitate to even write this here, since the operator $\partial_\theta^{-1}$ is not even well-defined in 
this context.} ``$\partial_\theta^{-1}\mathcal{L}[\phi]$."  As such, in the forthcoming work we will consider directly the generalized eigenvalue
problem \eqref{e:gen_ev1}.
\end{remark}

We now turn to studying the spectrum associated with \eqref{e:gen_ev1}.  A detailed discussion regarding the structure of the spectrum
can be found in Appendix \ref{s:pencil}.  Here, we only summarize the main ideas.  
Since the operator $\mathcal{L}[\phi]$ has $1$-periodic coefficients, 
Floquet theory implies that solutions of \eqref{e:gen_ev1} can not be integrable on $\RM$
and, at best, they can be bounded functions on the line: see, for example, \cite{BHJ16,JP20,RS78}.  It follows
that the $L^2(\RM)$-spectrum of \eqref{e:gen_ev1} consists entirely of essential spectrum.
Additionally, any bounded solution (up to linear combinations) of \eqref{e:gen_ev1} is necessarily of the form
\[
\tilde{v}(\theta)=e^{i\xi\theta}w(\theta)
\]
for some $w\in L^2_{\rm per}(0,1)$ and $\xi\in[-\pi,\pi)$.  
Substituting the above decomposition into \eqref{e:gen_ev1} leads to the consideration of the one-parameter family of spectral problems
\begin{equation}\label{e:gen_ev_bloch}
\lambda k\left(\partial_\theta+i\xi\right)w=e^{-i\xi \theta}\mathcal{L}[\phi]e^{i\xi \theta}w=: \mathcal{L}_\xi[\phi]w~~\xi\in[-\pi,\pi)
\end{equation}
considered on $L^2_{\rm per}(0,1)$.  The parameter $\xi$ is referred to as the Bloch frequency, or sometimes the Bloch parameter, 
and  $\mathcal{L}_\xi[\phi]$ denotes the one-parameter 
family of so-called Bloch operators acting on $L^2_{\rm per}(0,1)$.  
Characterizing the spectrum of \eqref{e:gen_ev_bloch} in terms of a periodic Evans function (see Lemma \ref{L:evans}), 
we find for each $\xi\in[-\pi,\pi)$ that the $L^2_{\rm per}(0,1)$-spectrum of the linear operator pencil \eqref{e:gen_ev_bloch} is 
comprised entirely of isolated eigenvalues of finite multiplicity.  
Putting everything together, it follows that $\lambda\in\CM$ belongs to the $L^2(\RM)$-spectrum
for the linear operator pencil \eqref{e:gen_ev1} if and only if there exists a $\xi\in[-\pi,\pi)$ and a non-trivial solution $w\in L^2_{\rm per}(0,1)$ to 
the equation \eqref{e:gen_ev_bloch}.  In particular, we have the following result.

\begin{proposition}\label{P:Floquet}
For the spectral problem \eqref{e:gen_ev1}, the following results hold:
\begin{itemize}
\item[(1)]  The $L^2(\RM)$-spectrum associated to \eqref{e:gen_ev1} is entirely essential, i.e. the $L^2(\RM)$-point spectrum is empty.
\item[(2)] We have $\lambda\in \sigma_{L^2(\RM)}\left(\mathcal{L}[\phi];k\partial_\theta\right)$ 
if and only if there exists a $\xi\in[-\pi,\pi)$ such that $\lambda\in\sigma_{L^2_{\rm per}(0,1)}\left(\mathcal{L}[\phi];k(\partial_\theta+i\xi)\right)$.
\item[(3)]  For each $\xi\in[-\pi,\pi)$, the set  $\sigma_{L^2_{\rm per}(0,1)}\left(\mathcal{L}[\phi];k(\partial_\theta+i\xi)\right)$ consists
entirely of isolated eigenvalues of finite (algebraic) multiplicity.
\item[(4)] We have the  spectral decomposition
\[
\sigma_{L^2(\RM)}\left(\mathcal{L}[\phi];k\partial_\theta\right)=\bigcup_{\xi\in[-\pi,\pi)}\sigma_{L^2_{\rm per}(0,1)}\left(\mathcal{L}_\xi[\phi];k(\partial_\theta+i\xi)\right).
\]
\end{itemize}
\end{proposition}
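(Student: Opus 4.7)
The overall strategy is to adapt the standard Floquet--Bloch reduction for periodic-coefficient operators to the setting of linear operator pencils developed in Appendix \ref{s:pencil}. The central tool is the Bloch transform
\[
\mathcal{B}: L^2(\mathbb{R}) \longrightarrow L^2\bigl([-\pi,\pi); L^2_{\rm per}(0,1)\bigr),
\]
an isometric isomorphism which intertwines the differential operators with $1$-periodic coefficients acting on $L^2(\mathbb{R})$ with the fibered one-parameter family of Bloch operators $\mathcal{L}_\xi[\phi]$ and $k(\partial_\theta+i\xi)$ acting on $L^2_{\rm per}(0,1)$. Under this transform, the pencil $\bigl(\mathcal{L}[\phi],k\partial_\theta\bigr)$ is conjugated to the direct integral of the Bloch pencils $\bigl(\mathcal{L}_\xi[\phi],k(\partial_\theta+i\xi)\bigr)$, and the proposition then reduces to understanding the individual fibers together with how their spectra assemble.

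For part (1), I would argue directly from classical Floquet theory applied to the fourth-order ODE $\mathcal{L}[\phi]v = \lambda k v'$: its fundamental solutions have the form $e^{\mu\theta}w(\theta)$ with $w\in L^2_{\rm per}(0,1)$ and Floquet exponent $\mu\in\mathbb{C}$ (with polynomial-in-$\theta$ corrections in the event of Jordan blocks), none of which is square integrable on the whole line. Hence no eigenvalues exist in $L^2(\mathbb{R})$. For part (3), the Bloch pencil is posed on the compact circle with periodic boundary conditions; the periodic Evans function $E(\lambda,\xi)$ supplied by Lemma \ref{L:evans} is entire in $\lambda$ at each fixed $\xi$ and is not identically zero (as can be verified, for example, by taking $|\mathrm{Re}(\lambda)|$ sufficiently large so that the constant-coefficient principal part dominates and yields invertibility). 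Its zeros thus form a discrete set, and the pencil theory in Appendix \ref{s:pencil} identifies each zero as an eigenvalue of finite algebraic multiplicity.

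For parts (2) and (4), I would establish the two inclusions of the spectral decomposition separately. The easy direction is that any Bloch eigenvalue $\lambda_0\in\sigma_{L^2_{\rm per}(0,1)}\bigl(\mathcal{L}_\xi[\phi],k(\partial_\theta+i\xi)\bigr)$ with eigenfunction $w$ yields a bounded Floquet-type solution $\tilde v(\theta)=e^{i\xi\theta}w(\theta)$ of \eqref{e:gen_ev1}. A standard Weyl-singular-sequence construction, truncating $\tilde v$ by smooth cutoffs supported on increasingly many periods and rescaling, then shows $\lambda_0\in\sigma_{L^2(\mathbb{R})}\bigl(\mathcal{L}[\phi],k\partial_\theta\bigr)$. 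The reverse inclusion is obtained by assembling the fiberwise resolvents: for $\lambda$ lying outside every Bloch spectrum, the resolvents $\bigl(\lambda k(\partial_\theta+i\xi)-\mathcal{L}_\xi[\phi]\bigr)^{-1}$ define a bounded operator on the direct integral space provided they are uniformly bounded in $\xi\in[-\pi,\pi)$, which follows from continuity of the Bloch spectrum in $\xi$ and compactness of the Bloch parameter interval.

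The main obstacle is precisely this last uniform-boundedness step, which is subtle in the pencil setting because $k\partial_\theta$ fails to be invertible on $L^2(\mathbb{R})$ (equivalently, $k(\partial_\theta+i\xi)$ is non-invertible at $\xi=0$), so the standard resolvent estimates for closed operators are not immediately available. Justifying the direct integral decomposition therefore requires invoking the pencil-theoretic substitutes developed in Appendix \ref{s:pencil}, in particular the characterization of the resolvent set of a linear operator pencil and the corresponding version of Fredholm decomposition. Once those results are in hand, the assembly of the Bloch resolvents into a bounded inverse on the direct integral is routine and completes the proof of parts (2) and (4).
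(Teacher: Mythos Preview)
Your proposal is correct, but the paper takes a more ODE-oriented route than your functional-analytic Bloch-transform approach. For part (1), the paper (Lemma \ref{L:essential}) uses a period-shift operator argument: assuming a finite-dimensional eigenspace $\Sigma_0$ exists, the unit shift $\mathcal{S}f(\theta)=f(\theta+1)$ restricts to a finite-dimensional unitary on $\Sigma_0$, hence has an eigenvector $\tilde v$ with $|\tilde v(m)|=|\tilde v(0)|$ for all $m\in\mathbb{Z}$, contradicting $\tilde v\in L^2(\mathbb{R})$. This sidesteps any explicit Floquet decomposition of the fundamental solution set. For parts (2) and (4), rather than assembling fiberwise resolvents via the Bloch transform, the paper (Lemma \ref{L:evans}) rewrites \eqref{e:gen_ev1} as a first-order system, invokes the classical characterization (Gardner, Henry) that $\lambda$ lies in the essential spectrum if and only if the ODE admits a nontrivial bounded solution, and then reads this off as the monodromy $\mathbb{M}(\lambda)$ having an eigenvalue on the unit circle, i.e. $D(\lambda,\xi):=\det(\mathbb{M}(\lambda)-e^{i\xi}I)=0$ for some $\xi$. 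Part (3) then follows exactly as you outline, from analyticity of $D(\cdot,\xi)$.

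The practical difference is that the paper's Evans-function route entirely bypasses the uniform-in-$\xi$ resolvent bound you correctly flag as the main obstacle in your approach: once the spectrum is identified with the zero set of the entire function $D$, both inclusions in (4) are immediate and no direct-integral assembly is needed. The paper does acknowledge your Bloch-transform strategy as a valid alternative in the Remark following Lemma \ref{L:discrete}, so both methods are accepted; the monodromy approach is simply more economical here because the pencil structure makes the resolvent estimates awkward, as you noted.
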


Again, readers should consult Appendix \ref{s:perttheory} for details.
By our above definitions, it follows that a periodic traveling wave $\phi$ is spectrally stable with respect to square integrable perturbations provided
that for each $\xi\in[-\pi,\pi)$ the $1$-periodic Bloch (linear operator pencil) eigenvalue problem \eqref{e:gen_ev_bloch} has all of its eigenvalues
along the imaginary axis.  
Verifying this condition for all $\xi$ is clearly a daunting, if not impossible, task in general.  For the purposes of modulational stability analysis and for developing
a connection to Whitham's theory of modulations, however, it is well-known that one need only consider  
 \eqref{e:gen_ev_bloch} for $|\xi|\ll 1$ and only for $\lambda$ in a sufficiently small neighborhood of the origin in the spectral plane.  
As motivation for this note that, from the above identifications, a non-trivial $1$-periodic solution of \eqref{e:gen_ev_bloch}
induces a non-trivial solution to the problem
\[
\left\{\begin{aligned}
&\mathcal{L}[\phi]\tilde{v}= \lambda k\partial_\theta\tilde{v},\\
&\tilde{v}(\theta+1)=e^{i\xi}\tilde{v}(\theta).
\end{aligned}\right.
\]
As such, we see that taking $\xi=0$ corresponds to the stability of $\phi$ to perturbations with the same periodic structure as $\phi$ itself, i.e. to co-periodic perturbations.  
Similarly, $0<|\xi|\ll 1$ corresponds to long wave perturbations of $\phi$, which are observable on large space and time scales (relative to the internal frequency of oscillation
of $\phi$ itself).  Modulations of $\phi$ represents a special subclass of long-wave perturbations where the effect of the perturbation is to allow
the fundamental wave characteristics (here, owing to our existence theory, the frequency $k$, momentum $P$, and the spatial translation invariance) to slowly vary in time, 
i.e. to modulate.  As we will see, variations of the fundamental wave characteristics of $\phi$ are encoded in the $1$-periodic generalized kernel of 
associated to \eqref{e:gen_ev_bloch} with $\xi=0$.
From these considerations, it is natural to expect that the stability of $\phi$ to slow modulations is determined by studying 
the spectrum associated to \eqref{e:gen_ev_bloch} for 
$|(\lambda,\xi)|\ll 1$.

With the above in mind, we now provide a precise definition of our (spectral) notion of modulational stability.

\begin{definition}[Modulational Stability/Instability]\label{d:MI}
A given periodic traveling wave solution $\phi(\cdot;k,P)$ of \eqref{e:ostrov} is (spectrally) modulationally stable if there exists an open neighborhood $\mathcal{N}\subset\CM$
of the origin $\lambda=0$ and a $\xi_0>0$ such that 
\[
\sigma_{L^2_{\rm per}(0,1)}\left(\mathcal{L}_\xi[\phi];k(\partial_\theta+i\xi)\right)\cap \mathcal{N}\subset i \RM 
\]
for all $|\xi|<\xi_0$.
 Otherwise, the wave $\phi$ is said to be (spectrally) modulationally unstable.
\end{definition}

\begin{remark}\label{r:spec_sym}
It is straightforward to see that the spectral problem \eqref{e:gen_ev_bloch} is invariant under the transformation
\begin{equation}\label{e:spec_sym}
\left(w,\lambda,\xi,\theta\right)\mapsto\left(\overline{w},-\overline{\lambda},\xi,-\theta\right)
\end{equation}
and hence for a fixed $\xi$ the eigenvalues associated with \eqref{e:gen_ev_bloch} are symmetric with respect to reflections
about the imaginary axis.  Similarly, \eqref{e:gen_ev_bloch} is invariant under the transformation
\[
\left(w,\lambda,\xi\right)\mapsto\left(\overline{w},\overline{\lambda},-\xi\right)
\]
and hence
\[
\sigma_{L^2_{\rm per}(0,1)}\left(\mathcal{L}_\xi[\phi];k(\partial_\theta+i\xi)\right) =\overline{\sigma_{L^2_{\rm per}(0,1)}\left(\mathcal{L}_{-\xi}[\phi];k(\partial_\theta-i\xi)\right)}.
\]
Consequently, when performing a modulational stability analysis it is sufficient to consider $0<\xi\ll 1$ only.  While this observation is useful,
especially in numerical computations, our analysis here does not explicitly rely on it.
\end{remark}

In what follows, we will conduct a mathematically rigorous study of the eigenvalues of the (Bloch) spectral problem
\eqref{e:gen_ev_bloch} for $|(\lambda,\xi)|\ll 1$.  
As a first step, we will study the generalized kernel in $L^2_{\rm per}(0,1)$ for the unmodulated (i.e. $\xi=0$) problem.  As a consequence
of Assumption \ref{a:soln} regarding the smooth parametrization of the manifold of smooth periodic traveling wave solutions of the Ostrovsky equation,
we will show that, generically, $0\in\sigma_{L^2_{\rm per}(0,1)}\left(\mathcal{L}_0[\phi];k\partial_\theta\right)$ with algebraic multiplicity
two and geometric multiplicity one.
We will then use spectral perturbation theory to track the two $1$-periodic eigenvalues $\{\lambda_j(\xi)\}_{j=1}^2$  near the origin 
as they bifurcate from the $(\lambda,\xi)=(0,0)$ state.  
To this end, we will construct asymptotic approximations of the left and right projection operators onto the total two-dimensional eigenspace. 
This work will culminate in the identification of the leading order asymptotics of the eigenvalues $\lambda_j(\xi)$ in terms of the 
eigenvalues of an explicit $2\times 2$ matrix.

\begin{remark}\label{r:no_invert}
One may note that for each fixed, small $\xi\neq 0$ that the operator $(\partial_\theta+i\xi)$ is actually invertible on $L^2_{\rm per}(0,1)$.  Consequently,
for  such small, non-zero Bloch frequencies the (operator pencil) eigenvalue problem \eqref{e:gen_ev_bloch} can be rewritten
as the standard eigenvalue problem
\[
k^{-1}(\partial_\theta+i\xi)^{-1}\mathcal{L}_\xi[\phi]w=\lambda  w,~~\xi\neq 0.
\]
While this may seem convenient at first, in our forthcoming analysis we require analytic expansions of the various linearized operators involved about $\xi=0$.
It is quickly observed that $\mathcal{L}_\xi[\phi]$ does not map into the mean-zero subspace of $L^2_{\rm per}(0,1)$ for $0<|\xi|\ll 1$,
and hence the operator $\partial_\theta^{-1}$ (or any positive powers of it) is not in general well-defined on the range of $\mathcal{L}_\xi[\phi]$.  
As a result, the operator $(\partial_\theta+i\xi)^{-1}$ can not be analytically expanded about $\xi=0$.
Consequently, moving forward we will continue to work directly with the (operator pencil) eigenvalue problem \eqref{e:gen_ev_bloch}.
As we will see in the next section, however, the projection of the operator $(\partial_\theta+i\xi)$ onto the (modulational) subspace
bifurcating from $(\lambda,\xi)=(0,0)$ state is in fact invertible with its inverse being analytic in $\xi$ for $|\xi|\ll 1$.  This is a key feature of our forthcoming spectral
perturbation analysis.
\end{remark}

\subsection{Analysis of the Unmodulated Problem}

As discussed above, our first step is to study the $1$-periodic generalized kernel associated
with the linear operator pencil associated with \eqref{e:gen_ev_bloch}, as well as its adjoint problem, at $\xi=0$.
This will serve as the base point for our forthcoming spectral perturbation analysis.

To this end let $\phi=\phi(\cdot;k,P)$ be a $1$-periodic solution of the profile equation \eqref{e:profile_scale}.
Recalling that $c=c(k,P)$, differentiating the profile equation \eqref{e:profile_scale} gives the identities
\be
\label{e:Lident}
\mathcal{L}_0[\phi]\phi' = 0, \qquad \mathcal{L}_0[\phi]\phi_P = -k^2c_P\phi'',
\ee
where now $'$ denotes differentiation with respect to $\theta$.  Noting that the frequency $k$ is an independent variable, 
it follows that the functions $\phi'$ and $\phi_P$ are both smooth $1$-periodic functions 
and hence \eqref{e:Lident} implies that $\lambda=0$ is a $1$-periodic eigenvalue of the (operator pencil) eigenvalue
problem \eqref{e:gen_ev_bloch} at $\xi=0$ with algebraic multiplicity at least two, and that $\phi_P$ is a generalized
eigenfunction.

Additionally, note that if $f\in L^2_{\rm per}(0,1)$ has mean-zero then we have the identity
\begin{equation}\label{e:L_adj_identity}
\begin{aligned}
\mathcal{L}_0^\dag[\phi]f&=\gamma f+k^2\left(c-\phi-\beta k^2\partial_\theta^2\right)f''\\
	&=\partial_\theta^{-2}\left(\gamma f''+k^2\partial_\theta^2\left(c-\phi-\beta k^2\partial_\theta^2\right)f''\right)\\
	&=\partial_\theta^{-2}\mathcal{L}[\phi]\partial_\theta^2 f.
\end{aligned}
\end{equation}
Since both $\phi'$ and $\phi_P$ are necessarily mean-zero functions,\footnote{Differentiating the relation 
$\int_0^1\phi(\cdot;k,P)d\theta=0$ with respect to $P$, and keeping in mind
that the frequency $k$ and momentum $P$ are independent variables, it follows that $\int_0^1\phi_P~d\theta=0$.} the identity \eqref{e:L_adj_identity} immediately implies
that
\be
\label{e:Ldagident}
\mathcal{L}_0^\dag[\phi]\partial_\theta^{-1}\phi=0,\qquad \mathcal{L}_0^\dag[\phi]\partial_\theta^{-2}\phi_P=-k^2 c_P\phi.
\ee 
It follows that $\partial_\theta^{-2}\phi_P$ is a generalized eigenfunction for the adjoint problem associated with \eqref{e:gen_ev_bloch} at $\xi=0$
sitting above the genuine $\lambda=0$ eigenfunction $\partial_\theta^{-1}\phi$.  
Combining the above observations with the Fredholm alternative yields the following characterization of the generalized right and left kernels 
for the co-periodic (i.e. $\xi=0$) generalized eigenvalue problem \eqref{e:gen_ev_bloch}.

\begin{theorem}[Co-Periodic Generalized Kernel]
\label{t:co-per}
Assume that Assumption \ref{a:soln} holds, and let $\phi=\phi(\cdot;k,P)$ be a $1$-periodic traveling wave solution of the Ostrovsky equation \eqref{e:ostrov_scale}.
Additionally, assume that Assumption \ref{a:ker} holds at $\phi$.  Then $\lambda=0$ is an $L^2_{\rm per}(0,1)$-eigenvalue associated with the operator pencil eigenvalue 
problem \eqref{e:gen_ev_bloch} at $\xi=0$ with algebraic multiplicity two and geometric multiplicity one.  In particular, defining the functions
\begin{align*}
\Phi_1^0 &= \phi', \hspace{-30mm} &\Phi_2^0 &= \phi_P, \\
\Psi_1^0 &= \d_{\theta}^{-2}\phi_P, \hspace{-30mm}  &\Psi_2^0 &= -\partial_\theta^{-1}\phi
\end{align*} 
it follows that the sets $\{\Phi_{\ell}^0\}_{\ell=1}^2$ and $\{\Psi_j^0\}_{j=1}^2$ provide bases of solutions for the $1$-periodic generalized kernels of 
the operator pencil \eqref{e:gen_ev_bloch} at $\xi=0$, as well as its adjoint problem, respectively.  
Further, the normalization condition
\begin{equation}\label{e:norm1}
\left< \Psi_j^0,\partial_\theta \Phi_{\ell}^0 \right> = \delta_{\ell j}
\end{equation}
holds and the functions $\Phi_{\ell}^0$ and $\Psi_j^0$ satisfy the equations 
\[
\begin{aligned}
&\mathcal{L}_0[\phi]\Phi_1^0 =0,     && \mathcal{L}_0[\phi]\Phi_2^0=-k^2 c_P\partial_\theta\Phi_1^0,\\
&\mathcal{L}_0^\dag[\phi]\Psi_1^0=k^2 c_P\partial_\theta\Psi_2^0,    && \mathcal{L}_0^\dag[\phi]\Psi_2^0=0.
\end{aligned}
\]
\end{theorem}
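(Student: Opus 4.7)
Combining the identities \eqref{e:Lident} and \eqref{e:Ldagident} (derived just above the theorem) with Lemma \ref{L:non-deg} and the Fredholm alternative on $L^2_{\rm per}(0,1)$, I would determine the geometric and algebraic multiplicities of $\lambda=0$ and then verify \eqref{e:norm1} directly. The same chain of reasoning applied to $\mathcal{L}_0^\dag[\phi]$ in place of $\mathcal{L}_0[\phi]$ simultaneously handles the adjoint side.

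First, Lemma \ref{L:non-deg} (transported from $L^2_{\rm per}(0,T)$ to $L^2_{\rm per}(0,1)$ via the rescaling $\theta = k\eta$) immediately gives $\ker\mathcal{L}_0[\phi] = {\rm span}\{\phi'\}$ and $\ker\mathcal{L}_0^\dag[\phi] = {\rm span}\{\partial_\theta^{-1}\phi\}$, so both the direct and adjoint geometric multiplicities at $\lambda=0$ equal one. To bump the algebraic multiplicity up to at least two, I would rewrite the second identity of \eqref{e:Lident} as $\mathcal{L}_0[\phi]\bigl(\phi_P/(kc_P)\bigr) = -k\partial_\theta\phi'$, which matches the pencil Jordan-chain relation $Aw = -Bv$ of the notation section with $A = \mathcal{L}_0[\phi]$, $B = k\partial_\theta$, $v = \phi'$; here the corollary to Lemma \ref{L:non-deg} supplies $c_P\neq 0$ and the evenness of $\phi$ under Assumption \ref{a:soln} yields linear independence of the odd $\phi'$ and the even $\phi_P$. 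The analogous computation on $\mathcal{L}_0^\dag[\phi]$ via \eqref{e:Ldagident} exhibits $\partial_\theta^{-2}\phi_P$ as a generalized adjoint eigenfunction above $\partial_\theta^{-1}\phi$.

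The heart of the proof is ruling out a length-three Jordan chain. Supposing toward contradiction that some $z\in L^2_{\rm per}(0,1)$ satisfies $\mathcal{L}_0[\phi]z = -\phi_P'/c_P$, and noting that $\mathcal{L}_0[\phi]$ is Fredholm of index zero on $L^2_{\rm per}(0,1)$ (as a fourth-order elliptic operator with smooth periodic coefficients, having one-dimensional kernel and co-kernel by Lemma \ref{L:non-deg}), the Fredholm alternative forces
\begin{equation*}
0 = \left\langle -\frac{\phi_P'}{c_P},\,\partial_\theta^{-1}\phi\right\rangle = \frac{1}{c_P}\langle\phi_P,\phi\rangle
\end{equation*}
after one integration by parts. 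But differentiating the rescaled momentum identity $\int_0^1 \phi(\theta;k,P)^2\,d\theta = 2kP$ (equivalent to $P = \tfrac12\int_0^{1/k}\phi^2\,d\eta$ under $\theta = k\eta$) with respect to $P$ at fixed $k$ gives $\langle\phi_P,\phi\rangle = k \ne 0$, a contradiction. Hence the algebraic multiplicity is exactly two, and the analogous Fredholm obstruction controls the adjoint chain.

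Finally, \eqref{e:norm1} is a short integration-by-parts calculation: both diagonal pairings reduce to $\langle\phi_P,\phi\rangle$ (up to sign), while both off-diagonal pairings vanish as integrals of antisymmetric $\partial_\theta(h^2/2)$-type products over a full period. The main obstacle throughout is the non-vanishing of the pairing $\langle\phi_P,\phi\rangle$ in the Fredholm step, which is precisely where Assumption \ref{a:soln} plays its essential role: the fact that $(k,P)$ provides bona fide coordinates on the profile manifold is equivalent to $\partial_P\langle\phi,\phi\rangle \ne 0$, and it is exactly this nonvanishing that rigidly prevents a third Jordan level at $\lambda = 0$.
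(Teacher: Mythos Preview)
Your proposal is correct and matches the paper's argument essentially step for step: Lemma~\ref{L:non-deg} for the one-dimensional kernels, the identities \eqref{e:Lident}--\eqref{e:Ldagident} to build the length-two Jordan chains, and the Fredholm alternative together with $\langle\phi,\phi_P\rangle\neq 0$ to rule out a third level. One small caveat: your rescaling gives $\langle\phi,\phi_P\rangle=k$ from the $\eta$-variable definition of $P$ in Assumption~\ref{a:soln}, whereas the paper works in Section~\ref{s:modtime} with the $\theta$-coordinate convention $P=\tfrac12\int_0^1\phi^2\,d\theta$ (so that $\langle\phi,\phi_P\rangle=\partial_P P=1$), and it is this latter convention under which the diagonal entries of \eqref{e:norm1} come out equal to $1$ rather than $k$.
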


\begin{proof}
By Assumption \ref{a:ker}, we know that the kernel of $\mathcal{L}_0[\phi]$ is one-dimensional and is spanned by $\phi'$.  Further,
recalling the relation \eqref{e:L_adj_identity} it follows again by Assumption \ref{a:ker} that the kernel of $\mathcal{L}_0^\dag[\phi]$
is spanned by $\partial_\theta^{-1}\phi$.  
Noting that the functions $\phi'$ and $\phi_P$ are necessarily linearly independent, it follows
from \eqref{e:Lident} that $c_P\neq 0$, and hence that $\phi_P$ is in the generalized kernel of $\mathcal{L}_0[\phi]-\lambda k\partial_\theta$ at $\lambda=0$.
Similarly, the above considerations show that the kernel of $\mathcal{L}_0^\dag[\phi]$
is spanned by $\partial_\theta^{-1}\phi$ and that $\partial_\theta^{-2}\phi_P$ lies in the generalized kernel of $\mathcal{L}_0^\dag[\phi]+\lambda k\partial_\theta$
at $\lambda=0$.

To see that $\lambda=0$ is an eigenvalue with algebraic multiplicity exactly two, we note by the Fredholm alternative that the equation
\[
\mathcal{L}_0[\phi]w=\partial_\theta \phi_P
\]
has no $1$-periodic solution $w$ since 
\[
\left<\Psi_2^0,\partial_\theta\phi_P\right>= \left<\phi,\phi_P\right>=\partial_PP=1.
\]
It follows that the generalized kernel of $\mathcal{L}_0[\phi]-\lambda k\partial_\theta$ at $\lambda=0$ has algebraic multiplicity
two and geometric multiplicity one.
The rest of the claims follow similarly and by direct calculations.
\end{proof}

Before continuing, we make two important remarks.  First, we note that the algebraic and geometric multiplicity of $\lambda=0$ as a $1$-periodic
eigenvalue of \eqref{e:gen_ev_bloch} at $\xi=0$ has been rigorously verified for the (asymptotically) small Stokes wave
solutions of the Ostrovsky equation: see \cite{BJP24}.  There, it is rigorously shown that the generalized kernel of the operator $\mathcal{L}_0[\phi]-\lambda k\partial_\theta$
at $\lambda=0$ has algebraic multiplicity two and geometric multiplicity one, which is consistent with Theorem \ref{t:co-per}, and (small amplitude) asymptotic
expansions for the basis elements of the generalized kernel was provided.

Secondly, we note that while the operator $\partial_\theta$ is not invertible on $L^2_{\rm per}(0,1)$, thus requiring us to work with 
the linear operator pencil formulation \eqref{e:gen_ev_bloch}, instead of a standard eigenvalue problem (see Remark \ref{r:no_invert}),
the normalization condition \eqref{e:norm1} implies that $\partial_\theta$ is indeed invertible on the total $1$-periodic
generalized kernel of the operator $\mathcal{L}_0[\phi]-\lambda k\partial_\theta$ at $\lambda=0$.  That is, $\partial_\theta$
is indeed invertible on the total $\lambda=0$ eigenspace associated with \eqref{e:gen_ev_bloch} at $\xi=0$.
This observation will be used heavily in our forthcoming spectral perturbation theory (see, for example, \eqref{e:B_expand} below).
We note, however, that since the bifurcating eigenspace for $|\xi|\ll 1$ is not necessarily mean-zero\footnote{In fact, it is known not to be a mean-zero subspace.}
that even with the above observation the operator $(\partial_\theta+i\xi)^{-1}$ still can not be analytically expanded about $\xi=0$: see, again,
Remark \ref{r:no_invert}.

\subsection{Spectral Perturbation Analysis}
\label{s:perttheory}

Now that we have constructed a basis for the generalized kernels of $\mathcal{L}_0[\phi]-\lambda k\partial_\theta$ at $\lambda=0$,
as well as its adjoint operator at $\lambda=0$, we now consider the spectrum associated to \eqref{e:gen_ev_bloch} 
for $|(\lambda,\xi)| \ll 1$.  To this end, we observe that the Bloch operator $\mathcal{L}_\xi[\phi]$ is analytic in $\xi$ and can
be directly expanded as
\begin{align*}
\mathcal{L}_{\xi}[\phi] &= \gamma + k^2(\d_{\theta}+i\xi)^2(c-\phi-\beta k^2(\d_{\theta}+i\xi)^2) \\
&= L_0 + (ik\xi)L_1 + (ik\xi)^2L_2 + \mathcal{O}(\xi^3),
\end{align*}
where
\be
\label{e:Ls}
L_0= \mathcal{L}_0[\phi], \quad L_1= 2k\d_{\theta}(c-\phi) - 4\beta k^3\d_{\theta}^3, \quad L_2= (c-\phi) - 6\beta k^2\d_{\theta}^2.
\ee 
With the above analytic expression in hand, we recall by Theorem~\ref{t:co-per} that $\lambda=0$ is a $1$-periodic eigenvalue 
for the linear operator pencil $\mathcal{L}_0[\phi]-\lambda k\partial_\theta$ with algebraic multiplicity two and geometric multiplicity one. 
By spectral perturbation theory for linear operator pencils (see Lemma \ref{l:pencil_spec} in Appendix \ref{s:pencil}), we know that
 \eqref{e:gen_ev_bloch} will have two eigenvalues $\{\lambda_j(\xi)\}_{j=1}^2$ near $\lambda=0$ for $|\xi|\ll 1$.
These eigenvalues necessarily bifurcate from the $\lambda_j(0)=0$ state, and the modulational stability, or instability, 
of the underlying wave $\phi$ is thus determined by tracking these two eigenvalues for $|\xi|\ll 1$ and determining if they intersect with the open right half-plane. 
This will be accomplished by projecting the infinite dimensional (operator pencil) eigenvalue problem \eqref{e:gen_ev_bloch}
for $|\xi|\ll 1$ onto the total two-dimensional eigenspace associated with the eigenvalues $\{\lambda_j(\xi)\}_{j=1}^2$ bifurcating from the $(\lambda,\xi)$ state.

By the spectral perturbation theory for linear operator pencils (see Appendix \ref{s:pencil}), along with a standard Graham-Schmidt argument,
we know that the 
dual bases in Theorem~\ref{t:co-per} extend analytically for $|\xi|\ll 1$ into dual right and left bases $\{\Phi_{\ell}^\xi\}_{\ell=1}^2$ and $\{\Psi_j^\xi\}_{j=1}^2$ 
associated to the two eigenvalues $\{\lambda_j(\xi)\}_{j=1}^2$ near the origin and, further, 
that these extended  dual bases satisfy the normalization condition 
\begin{equation}\label{e:normalization}
\left< \Psi_{j}^{\xi},\partial_\theta\Phi_{\ell}^{\xi}\right> = \delta_{\ell j}
\end{equation}
for all $|\xi|\ll 1$. Given the existence of these bases, for $|\xi|\ll 1$ we can construct $\xi$-dependent rank two eigenprojections, namely,
\begin{equation}\label{e:proj}
\left\{
\begin{aligned}
&\Pi(\xi): L^2_{\rm per}(0,1) \to \bigoplus_{j=1}^2 {\rm gker}\Big(\mathcal{L}_{\xi}[\phi]-\lambda_j(\xi)k(\partial_\theta+i\xi)\Big)\\
&\widetilde{\Pi}(\xi): L^2_{\rm per}(0,1) \to \bigoplus_{j=1}^2 {\rm gker}\Big(\mathcal{L}_{\xi}^{\dagger}[\phi]+\overline{\lambda_j(\xi)}k(\partial_\theta+i\xi)\Big)
\end{aligned}\right.
\end{equation}
whose ranges are precisely the total right and left eigenspaces associated with the eigenvalues $\{\lambda_j(\xi)\}_{j=1}^2$ near the origin.  
For details on the construction
(via the Cauchy integral formula / Riesz projector)
of these eigenprojections for the linear operator pencil $\mathcal{L}_\xi[\phi]-\lambda k(\partial_\theta+i\xi)$, see again Lemma \ref{l:pencil_spec} in Appendix \ref{s:pencil}.
In particular, we note that the small eigenvalues $\lambda_j(\xi)$ are given precisely by the roots of the characteristic polynomial
\begin{equation}\label{e:charpoly1}
\det(\mathcal{D}_{\xi} - \lambda\mathcal{B}_\xi) = 0
\end{equation}
associated with the $2\times 2$ matrices
\be
\label{e:matrix_D}
\mathcal{D}_{\xi}:= \widetilde{\Pi}(\xi)\mathcal{L}_{\xi}[\phi]\Pi(\xi) = \left[ \left< \Psi_j^{\xi}, \mathcal{L}_{\xi}[\phi]\Phi_{\ell}^{\xi} \right> \right]_{j,\ell=1}^2
\ee
and
\be\label{e:matrix_B}
\mathcal{B}_{\xi}:= \widetilde{\Pi}(\xi)\left(k(\partial_\theta+i\xi)\right)\Pi(\xi), 
\ee 
defined for $|\xi|\ll 1$, where we have used the normalization \eqref{e:normalization} to simplify the expression for $\mathcal{B}_\xi$.  
It remains to calculate  expansions of the matrices $\mathcal{D}_{\xi}$ and $\mathcal{B}_\xi$ to sufficiently high order.

To this end, we begin by expanding the right and left dual bases as
\begin{align*}
\Phi_{\ell}^{\xi} &= \Phi_{\ell}^0 + (ik\xi)\left(\frac{1}{ik}\d_{\xi}\Phi_{\ell}^{\xi}\Big|_{\xi=0}\right) +\frac{(ik\xi)^2}{2}\left(\frac{1}{(ik)^2}\d_{\xi}^2\Phi_{\ell}^{\xi}\Big|_{\xi=0}\right) + \mathcal{O}(\xi^3), \\
\Psi_{j}^{\xi} &= \Psi_{j}^0 + (ik\xi)\left(\frac{1}{ik}\d_{\xi}\Psi_{j}^{\xi}\Big|_{\xi=0}\right) + \frac{(ik\xi)^2}{2}\left(\frac{1}{(ik)^2}\d_{\xi}^2\Psi_{j}^{\xi}\Big|_{\xi=0}\right) + \mathcal{O}(\xi^3),
\end{align*}
thus yielding the asymptotic expansion 
\begin{equation}\label{e:D_expand}
\mathcal{D}_{\xi} = D_0 + (ik\xi)D_1 + (ik\xi)^2D_2 + \mathcal{O}(\xi^3)
\end{equation}
for $|\xi|\ll 1$.  We first aim to determine the leading order asymptotics for each entry of the $2\times 2$ matrix $\mathcal{D}_{\xi}$ in the limit $\xi\to 0$.
Note that Theorem~\ref{t:co-per} immediately implies that
\be
\label{e:D0}
D_0= \begin{pmatrix} 0 & -k^2c_P \\ 0 & 0 \end{pmatrix}.
\ee
Further, an elementary calculation shows that
\be
\label{e:D_pre}
D_1 = \left[\left< \Psi_j^0, L_0\left(\frac{1}{ik}\d_{\xi}\Phi_{\ell}^{\xi}\Big|_{\xi=0}\right) + L_1\Phi_{\ell}^0 \right> + \left< \left(\frac{1}{ik}\d_{\xi}\Psi_j^{\xi}\Big|_{\xi=0}\right), L_0\Phi_{\ell}^0 \right> \right]_{j,\ell =1}^2.
\ee 
Since \eqref{e:D0} implies that the $(1,2)$ entry of $\mathcal{D}_\xi$ is $\mathcal{O}(1)$, we must calculate
the $(1,1)$, $(2,1)$, and $(2,2)$ entries of $D_1$.  To this end, we make two important observations. First, we note that the normalization of the right and left bases imply that
\[
\d_{\xi}\left< \Psi_j^{\xi}, \partial_\theta\Phi_{\ell}^{\xi} \right> \Big|_{\xi=0} = 0
\] 
and hence,
\be
\label{e:mv_der}
\left< \frac{1}{ik}\d_{\xi}\Psi_j^{\xi}\Big|_{\xi=0}, \partial_\theta\Phi_{\ell}^0\right> = -\left< \Psi_j^0,\partial_\theta \left(\frac{1}{ik}\d_{\xi}\Phi_{\ell}^{\xi}\Big|_{\xi=0}\right) \right>. 
\ee 
Secondly, we note that since $\Phi_{1}^{\xi}$ is in the total right kernel of $\mathcal{L}_{\xi}[\phi]-\lambda k(\partial_\theta+i\xi)$ 
associated with the eigenvalues $\{\lambda_j(\xi)\}_{j=1}^2$ near the origin, it follows that the function
\[
\left(\mathcal{L}_{\xi}[\phi]-\lambda k(\partial_\theta+i\xi)\right)\Phi_1^{\xi}
\]
is invariant with respect to the projection $\Pi(\xi)$ defined in \eqref{e:proj}. Specifically, we have 
\[
\Pi(\xi)\left(\mathcal{L}_{\xi}[\phi]-\lambda k(\partial_\theta+i\xi)\right)\Phi_1^{\xi} =\left(\mathcal{L}_{\xi}[\phi]-\lambda k(\partial_\theta+i\xi)\right)\Phi_1^{\xi}
\]
for all $|\xi|\ll 1$, and hence differentiating with respect to $\xi$ and evaluating at $(\lambda,\xi)=(0,0)$ yields
\be
\label{e:diff_id}
\frac{1}{ik}\d_{\xi}\Pi(\xi)\Big|_{\xi=0}L_0\Phi_1^0 + \Pi(0)\left(L_1\Phi_1^0 + L_0\left(\frac{1}{ik}\d_{\xi}\Phi_1^{\xi}\Big|_{\xi=0}\right)\right) = L_1\Phi_1^0 + L_0\left(\frac{1}{ik}\d_{\xi}\Phi_1^{\xi}\Big|_{\xi=0}\right).
\ee 
Next, differentiating the profile equation \eqref{e:profile_scale} with respect to $k$ gives
\be
\label{e:L_phik}
L_0\phi_k = -k^2c_k\phi'' - L_1\phi'.
\ee
Hence, recalling $L_0\Phi_1^0=0$ by Theorem~\ref{t:co-per} and using \eqref{e:L_phik} it follows that \eqref{e:diff_id} can be written as
\[
\Pi(0)\left(L_0\left(\frac{1}{ik}\d_{\xi}\Phi_1^{\xi}\Big|_{\xi=0}-\phi_k\right)-k^2c_k\phi''\right) = L_0\left(\frac{1}{ik}\d_{\xi}\Phi_1^{\xi}\Big|_{\xi=0}-\phi_k\right)-k^2c_k\phi''.
\] 
Recalling that $L_0\phi_P=-k^2 c_P\phi''$, the above can be rewritten as
\[
\Pi(0)\left(L_0\left(\frac{1}{ik}\d_{\xi}\Phi_1^{\xi}\Big|_{\xi=0}-\phi_k+\frac{c_k}{c_P}\phi_P\right)\right) = L_0\left(\frac{1}{ik}\d_{\xi}\Phi_1^{\xi}\Big|_{\xi=0}-\phi_k
+\frac{c_k}{c_P}\phi_P\right),
\] 
and hence that the function
\[
\frac{1}{ik}\d_{\xi}\Phi_1^{\xi}\Big|_{\xi=0} - \phi_k + \frac{c_k}{c_P}\phi_P
\] 
necessarily lies in the generalized kernel for $\mathcal{L}_\xi[\phi]-\lambda k(\partial_\theta+i\xi)$ at $(\lambda,\xi)=(0,0)$.  
By Theorem~\ref{t:co-per} it follows that
\[
\frac{1}{ik}\d_{\xi}\Phi_1^{\xi}\Big|_{\xi=0} = \phi_k + \sum_{j=1}^2 a_j\Phi_j^0
\]
for some constants $a_j\in\C$. In particular, by replacing the function $\Phi_1^{\xi}$ with the function
\be
\label{e:mod_basis1}
\widetilde{\Phi}_{1}^{\xi} =  \Phi_{1}^{\xi} - (ik\xi)\sum_{j=1}^2a_j\Phi_j^0,
\ee
while simultaneously replacing the left bases $\{\Psi_j^{\xi}\}_{j=1}^2$ with the functions 
\[
\widetilde{\Psi}_j^{\xi} = \Psi_j^{\xi} + (ik\xi)a_j\Psi_1^{\xi}, \quad j=1,2,
\]
it immediately follows that
\[
\widetilde{\Phi}_1^{\xi} = \Phi_1^0 + (ik\xi)\phi_k + \mathcal{O}(\xi^2), \quad \left<\widetilde{\Psi}_j^\xi,\partial_\theta\widetilde{\Phi}_\ell^\xi\right> = I + \mathcal{O}(\xi^2).
\]
Consequently, we have that up to a harmless modification of the right and left basis functions, we can identify the first variation of $\Phi_1^{\xi}$ in $\xi$ at $\xi=0$ as precisely $\phi_k$, while still retaining the normalization \eqref{e:normalization} of the basis up to $\mathcal{O}(\xi^2)$. 
Further, the identity \eqref{e:mv_der} still holds as it only involves first order information in $\xi$. Moving forward,
we will utilize this modified basis while omitting the tilde for ease of notation.

We can now compute the relevant entries of the matrix $D_1$. First, note from \eqref{e:D_pre} that the $(1,1)$ entry is given by
\[
\left< \Psi_1^0, L_0\left(\frac{1}{ik}\d_{\xi}\Phi_1^{\xi}\Big|_{\xi=0}\right) + L_1\Phi_1^0 \right> + \left< \frac{1}{ik}\d_{\xi}\Psi_1^{\xi}\Big|_{\xi=0}, L_0\Phi_1^0 \right>.
\]
Using \eqref{e:mod_basis1} as well as the identity \eqref{e:L_phik} we have
\begin{align*}
\left< \Psi_1^0, L_0\left(\frac{1}{ik}\d_{\xi}\Phi_1^{\xi}\Big|_{\xi=0}\right) + L_1\Phi_1^0 \right> &= \left< \Psi_1^0, L_0\phi_k + L_1\phi' \right> \\
&=-k^2c_k\left< \Psi_1^0, \partial_\theta\Phi_1^0 \right>, \\
&=-k^2c_k. 
\end{align*}
Hence, recalling $L_0\Phi_1^0=0$, it follows that
\[
\left< \Psi_1^0, L_0\left(\frac{1}{ik}\d_{\xi}\Phi_1^{\xi}\Big|_{\xi=0}\right) + L_1\Phi_1^0 \right> + \left< \frac{1}{ik}\d_{\xi}\Psi_1^{\xi}\Big|_{\xi=0}, L_0\Phi_1^0 \right> = -k^2c_k.
\]
Next, the $(2,1)$ entry is given by 
\begin{align*}
&\left< \Psi_2^0, L_0\left(\frac{1}{ik}\d_{\xi}\Phi_1^{\xi}\Big|_{\xi=0}\right) + L_1\Phi_1^0 \right> + \left< \frac{1}{ik}\d_{\xi}\Psi_2^{\xi}\Big|_{\xi=0}, L_0\Phi_1^0 \right>\\
&\qquad\qquad\qquad\qquad\qquad\qquad\qquad\qquad\qquad\qquad = 
	\left< \Psi_2^0, L_0\phi_k + L_1\Phi_1^0 \right> \\
&\qquad\qquad\qquad\qquad\qquad\qquad\qquad\qquad\qquad\qquad= \left< \Psi_2^0, -k^2c_k\partial_\theta\Phi_1^0 \right> \\
&\qquad\qquad\qquad\qquad\qquad\qquad\qquad\qquad\qquad\qquad= -k^2c_k \left< \Psi_2^0, \partial_\theta\Phi_1^0 \right> \\
&\qquad\qquad\qquad\qquad\qquad\qquad\qquad\qquad\qquad\qquad= 0,
\end{align*}
where we have again used the normalization \eqref{e:normalization}.
Finally, for the $(2,2)$ entry we first observe from Theorem \ref{t:co-per} and the identity \eqref{e:mv_der} that
\begin{align*}
\left< \frac{1}{ik}\d_{\xi}\Psi_2^{\xi}\Big|_{\xi=0}, L_0\Phi_2^0 \right> 
&= -k^2c_P \left< \frac{1}{ik}\d_{\xi}\Psi_2^{\xi}\Big|_{\xi=0}, \partial_\theta\Phi_1^0 \right> \\
&= k^2c_P \left< \Psi_2^0, \partial_\theta\frac{1}{ik}\d_{\xi}\Phi_1^{\xi} \right> \\
&= -k^2c_P \left< \partial_\theta^{-1}\phi,\partial_\theta \phi_k \right> \\
&= k^2c_P \left< \phi, \phi_k \right> \\
&= 0,
\end{align*}
where here we have used that $\left< \phi, \phi_k \right> = \partial_kP=0$.  It follows that the $(2,2)$ entry of $D_1$ is given by
\begin{align*}
\left< \Psi_2^0, L_0\left(\frac{1}{ik}\d_{\xi}\Phi_2^{\xi}\Big|_{\xi=0}\right) + L_1\Phi_2^0 \right>  &= \left< L_0^{\dagger}\Psi_2^0, \frac{1}{ik}\d_{\xi}\Phi_2^{\xi}\Big|_{\xi=0} \right> 
	+ \left< \Psi_2^0, L_1\Phi_2^0\right> \\
&= \left< \phi, L_1\phi_P \right>,
\end{align*}
where we used the fact that $L_0^{\dagger}\Psi_2^0=0$ (see Theorem \ref{t:co-per}).
Putting everything together it follows that\footnote{Moving forward, matrix entries marked with $*$ are considered irrelevant,
owing to the fact that lower order terms in the corresponding entry of the expansion of $\mathcal{D}_\xi$ in \eqref{e:D_expand} have already been computed.}
\be
\label{e:D1}
D_1= \begin{pmatrix} -k^2c_k & * \\ 0 & \left< \Psi_2^0,L_1\Phi_2^0 \right> \end{pmatrix}.
\ee

It remains to calculate the $(2,1)$ entry of the matrix $D_2$ which, by an elementary calculation, is given by
\begin{multline*} 
\left< \Psi_2^0, L_0\left(\frac{1}{2(ik)^2}\d_{\xi}^2\Phi_{1}^{\xi}\Big|_{\xi=0}\right) + L_1\left(\frac{1}{ik}\d_{\xi}\Phi_{1}^{\xi}\Big|_{\xi=0}\right) + L_2\Phi_{1}^0 \right> \\
+\left< \frac{1}{ik}\d_{\xi}\Psi_{2}^{\xi}\Big|_{\xi=0}, L_0\left(\frac{1}{ik}\d_{\xi}\Phi_{1}^{\xi}\Big|_{\xi=0}\right) + L_1\Phi_{1}^0 \right> + \left< \frac{1}{2(ik)^2}\d_{\xi}^2\Psi_2^{\xi}\Big|_{\xi=0}, L_0\Phi_{1}^0 \right>.
\end{multline*} 
Noting that $L_0^\dag\Psi_2^0=0$ and $L_0\Phi_1^0=0$, we first observe that
\[
\left< \Psi_2^0, L_0\left(\frac{1}{2(ik)^2}\d_{\xi}^2\Phi_{1}^{\xi}\Big|_{\xi=0}\right) + L_1\left(\frac{1}{ik}\d_{\xi}\Phi_{1}^{\xi}\Big|_{\xi=0}\right) + L_2\Phi_{1}^0 \right> 
	= \left< \Psi_2^0, L_1\phi_k + L_2\phi' \right>, 
\]
and
\[
\left< \frac{1}{(ik)^2}\d_{\xi}^2\Psi_j^{\xi}\Big|_{\xi=0}, L_0\Phi_1^0 \right> = 0.
\]
Finally,
\begin{align*}
\left< \frac{1}{ik}\d_{\xi}\Psi_2^{\xi}\Big|_{\xi=0}, L_0\left(\frac{1}{ik}\d_{\xi}\Phi_1^{\xi}\Big|_{\xi=0}\right) + L_1\Phi_1^0 \right> 
	&= \left< \frac{1}{ik}\d_{\xi}\Psi_2^{\xi}\Big|_{\xi=0}, L_0\phi_k + L_1\phi' \right> \\
&= -k^2c_k \left< \frac{1}{ik}\d_{\xi}\Psi_2^{\xi}\Big|_{\xi=0}, \partial_\theta\Phi_1^0 \right> \\
&= k^2c_k \left< \Psi_2^0, \partial_\theta\frac{1}{ik}\d_{\xi}\Phi_1^{\xi}\Big|_{\xi=0} \right> \\
&= k^2c_k \left< \phi, \phi_k \right> \\
&= 0.
\end{align*}
It follows that
\be
\label{e:D2}
D_2= \begin{pmatrix} * & * \\ \left< \Psi_2^0, L_1\phi_k + L_2\Phi_1^0 \right> & * \end{pmatrix}. 
\ee

Putting \eqref{e:D0}, \eqref{e:D1} and \eqref{e:D2} together, it follows that the matrix $\mathcal{D}_{\xi}$ in \eqref{e:matrix_D} expands analytically for $|\xi|\ll 1$ 
as
\[
\mathcal{D}_\xi=
\left(\begin{array}{cc}
-(ik\xi) k^2 c_k + \mathcal{O}(\xi^2) & -k^2 c_P + \mathcal{O}(\xi)\\
(ik\xi)^2\left< \Psi_2^0, L_1\phi_k + L_2\Phi_1^0 \right> + \mathcal{O}(\xi^3) & (ik\xi)\left<\Psi_2^0,L_1\Phi_2^0\right> + \mathcal{O}(\xi^2)
\end{array}\right).
\]
It remains now to calculate the leading order asymptotics for each entry of the $2\times 2$ matrix $\mathcal{B}_\xi$, defined in \eqref{e:matrix_B}, in the
limit as $\xi\to 0$.  To this end, the normalization condition \eqref{e:normalization} allows us to rewrite $\mathcal{B}_\xi$ as
\begin{equation}\label{e:B_expand}
\mathcal{B}_\xi=kI+(ik\xi) \left[ \left< \Psi_j^{\xi}, \Phi_{\ell}^{\xi} \right> \right]_{j,\ell=1}^2
\end{equation}
Noting that $\left<\Psi_2^0,\Phi_1^0\right> = -\left<\partial_\theta^{-1}\phi,\phi'\right>=\left<\phi,\phi\right>\neq 0$, it follows that\footnote{As we will see,
the $\mathcal{O}(\xi)$ term in the $(1,2)$ entry will not enter into our forthcoming calculation, and hence we will not attempt to calculate it here.}
\[
\mathcal{B}_\xi=\left(\begin{array}{cc}
							k + \mathcal{O}(\xi) & (ik\xi) \left<\Psi_1^0,\Phi_2^0\right> + \mathcal{O}(\xi^2)\\
							(ik\xi)\left<\phi,\phi\right> + \mathcal{O}(\xi^2)& k + \mathcal{O}(\xi)
							\end{array}\right)
\]
Importantly, we note that $\mathcal{B}_\xi$ is necessarily invertible for $|\xi|\ll 1$ with $\det\left(\mathcal{B}_\xi\right)=k^2+\mathcal{O}(\xi^2)$.
It follows from \eqref{e:charpoly1} that the small eigenvalues $\{\lambda_j(\xi)\}_{j=1}^2$ bifurcating from the $(\lambda,\xi)=(0,0)$ state
are given precisely by the eigenvalues of the $2\times 2$ matrix
\begin{align*}
\mathcal{B}_\xi^{-1}\mathcal{D}_\xi &= 
	\left(\begin{array}{cc}
			-(ik\xi)kc_k & -kc_P \\
			(ik\xi)^2\left(\left<\phi,\phi\right>c_k + \frac{1}{k}\left<\Psi_2^0,L_1\phi_k + L_2\Phi_1^0\right>\right) & 
									(ik\xi)\left(\left<\phi,\phi\right>c_P + \frac{1}{k}\left<\Psi_2^0,L_1\Phi_2^0\right>\right) 
			\end{array}\right)\\
&\qquad+
			\left(\begin{array}{cc}
				 \mathcal{O}(\xi^2) &  \mathcal{O}(\xi)\\
				  \mathcal{O}(\xi^3) & \mathcal{O}(\xi^2)
				  \end{array}\right),
\end{align*}
defined for $|\xi|\ll 1$.

By standard spectral perturbation theory (again, see \cite{K76}), it follows that the eigenvalues $\lambda_j(\xi)$ of $\mathcal{B}_\xi^{-1}\mathcal{D}_{\xi}$ 
are at least $C^1$ in $\xi$ for $|\xi|\ll 1$, and satisfy $\lambda_j(0)=0$.  As such, the eigenvalues $\lambda_j$ can be factored as
\be
\label{e:ev_factor}
\lambda_j(\xi) = ik\xi \mu_j(\xi)
\ee
for some continuous functions $\mu_j$ defined for $|\xi|\ll 1$ and satisfying $\mu_j(0)=0$.  Further, we note that by defining the invertible matrix
\be
\label{e:Smat}
S(\xi):= \begin{pmatrix} ik\xi & 0 \\ 0 & 1 \end{pmatrix}
\ee
and defining
\[
\widehat{\mathcal{M}}_{\xi} := S(\xi)\mathcal{B}_\xi^{-1}\mathcal{D}_\xi S(\xi)^{-1},
\]
it follows that $\widehat{\mathcal{M}}_{\xi}$ is analytic in $\xi$ for $|\xi|\ll 1$ and expands as
\begin{align*}
\widehat{\mathcal{M}}_{\xi} &= 
(ik\xi)\left(\begin{array}{cc}
			-kc_k & -kc_P \\
			\left<\phi,\phi\right>c_k + \frac{1}{k}\left<\Psi_2^0,L_1\phi_k + L_2\Phi_1^0\right> & 
									\left<\phi,\phi\right>c_P + \frac{1}{k}\left<\Psi_2^0,L_1\Phi_2^0\right> 
			\end{array}\right)
			+ \mathcal{O}(\xi^2)
\end{align*}
In particular, noting that
\[
\det\left(\mathcal{B}_\xi^{-1}\mathcal{D}_{\xi} - \lambda_j(\xi)I\right) = (ik\xi)^2\det\left(\frac{1}{ik\xi}\widehat{\mathcal{M}}_{\xi}-\mu_j(\xi)I\right)
\]
it follows that the $\mu_j(\xi)$ in \eqref{e:ev_factor} are precisely the eigenvalues of the matrix $\frac{1}{ik\xi}\widehat{\mathcal{M}}_{\xi}$. This establishes the following result.
\begin{theorem}
\label{t:pert}
Under the hypotheses of Theorem~\ref{t:co-per}, the spectrum associated with the (operator pencil) spectral problem \eqref{e:gen_ev1}
considered on $L^2(\R)$ in a sufficiently small neighborhood of the origin consists of precisely two $C^1$ curves $\{\lambda_j(\xi)\}_{j=1}^2$ defined for $|\xi|\ll 1$ which can be expanded as 
\[
\lambda_j(\xi) = ik\xi\mu_j(0) + o(\xi), \quad j=1,2
\]
where the $\mu_j(0)$ are the eigenvalues of the matrix 
\begin{equation}\label{e:M0}
\mathcal{M}_0(\phi) = \left(\begin{array}{cc}
			-kc_k & -kc_P \\
			\left<\phi,\phi\right>c_k + \frac{1}{k}\left<\Psi_2^0,L_1\phi_k + L_2\Phi_1^0\right> & 
									\left<\phi,\phi\right>c_P + \frac{1}{k}\left<\Psi_2^0,L_1\Phi_2^0\right>
			\end{array}\right).
\end{equation}
As such, a necessary condition for the wave $\phi$ to be modulationally stable in the sense of Definition~\ref{d:MI} is that the eigenvalues $\mu_j(0)$ are all real. Furthermore, the wave $\phi$ is modulationally stable provided that the eigenvalues of $\mathcal{M}_0$ are real and distinct. 
\end{theorem}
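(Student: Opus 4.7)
The overall strategy is to assemble the ingredients developed above — the Floquet-Bloch decomposition of Proposition~\ref{P:Floquet}, the generalized kernel structure of Theorem~\ref{t:co-per}, the pencil perturbation theory from Appendix~\ref{s:pencil}, and the entry-by-entry expansions of $\mathcal{D}_\xi$ and $\mathcal{B}_\xi$ carried out in the preceding paragraphs — into a single clean argument. The substantial calculational work has already been done; what remains is to justify the reduction to a $2\times 2$ matrix pencil, to extract the leading order via the Jordan-block rescaling $S(\xi)$ of \eqref{e:Smat}, and to deduce the stability conclusions from the resulting expansion.

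First, Proposition~\ref{P:Floquet} reduces the study of $\sigma_{L^2(\RM)}(\mathcal{L}[\phi]; k\partial_\theta)$ near the origin to that of the Bloch pencils $\mathcal{L}_\xi[\phi] - \lambda k(\partial_\theta + i\xi)$ for $\xi \in [-\pi,\pi)$. By Theorem~\ref{t:co-per}, $\lambda=0$ is an isolated eigenvalue of the $\xi=0$ pencil with total algebraic multiplicity two; applying the Riesz projection machinery of Lemma~\ref{l:pencil_spec}, for $|\xi|\ll 1$ one obtains analytic rank-two right and left spectral projections and dual bases satisfying \eqref{e:normalization}, and the bifurcating eigenvalues are precisely the two roots of $\det(\mathcal{D}_\xi - \lambda\mathcal{B}_\xi) = 0$. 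This guarantees that in a small neighborhood of $\lambda=0$ the $L^2(\RM)$-spectrum of \eqref{e:gen_ev1} consists of exactly two curves $\lambda_1(\xi), \lambda_2(\xi)$ defined for $|\xi|\ll 1$. Using the established expansions \eqref{e:D_expand}--\eqref{e:D2} and \eqref{e:B_expand}, together with the fact that $\mathcal{B}_\xi = kI + \mathcal{O}(\xi)$ is invertible for $|\xi|\ll 1$, this reduces to the ordinary eigenvalue problem for $\mathcal{B}_\xi^{-1}\mathcal{D}_\xi$.

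The structural observation is that every entry of $\mathcal{B}_\xi^{-1}\mathcal{D}_\xi$ is $\mathcal{O}(\xi)$ except the $(1,2)$ entry, which is $\mathcal{O}(1)$ and encodes the Jordan block at $\xi=0$. Conjugating by $S(\xi)$ produces the analytically expanding matrix $\widehat{\mathcal{M}}_\xi = (ik\xi)\mathcal{M}_0(\phi) + \mathcal{O}(\xi^2)$, with $\mathcal{M}_0(\phi)$ as in \eqref{e:M0}. Writing each small eigenvalue of $\mathcal{B}_\xi^{-1}\mathcal{D}_\xi$ as $\lambda_j(\xi) = ik\xi\,\mu_j(\xi)$, the factors $\mu_j(\xi)$ are precisely the eigenvalues of $(ik\xi)^{-1}\widehat{\mathcal{M}}_\xi = \mathcal{M}_0(\phi) + \mathcal{O}(\xi)$, so standard matrix perturbation theory (see, e.g., \cite{K76}) gives continuity of the $\mu_j$ with $\mu_j(0)$ the eigenvalues of $\mathcal{M}_0(\phi)$, while Puiseux-type regularity for analytic $2\times 2$ matrix families upgrades $\lambda_j(\xi) = ik\xi\,\mu_j(\xi)$ to a $C^1$ function of $\xi$. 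Consequently, $\lambda_j(\xi) = ik\xi\,\mu_j(0) + o(\xi)$, as claimed. For the necessary condition, if $\Im\mu_j(0) \neq 0$ then $\Re\lambda_j(\xi) = -k\xi\,\Im\mu_j(0) + o(\xi)$ changes sign across $\xi=0$, yielding spectral instability. For the sufficient condition, suppose $\mu_1(0), \mu_2(0)$ are real and distinct; then $\mu_j(\xi)$ remain distinct analytic functions of $\xi$ for $|\xi|$ small. The reflection symmetry of Remark~\ref{r:spec_sym} forces the set $\{\mu_1(\xi),\mu_2(\xi)\}$ to be invariant under complex conjugation, and distinctness rules out the swap $\mu_2(\xi) = \overline{\mu_1(\xi)}$ (which would force $\mu_1(0) = \mu_2(0)$), so both $\mu_j(\xi)$ are real and $\lambda_j(\xi) \in i\RM$, giving modulational stability in the sense of Definition~\ref{d:MI}.

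The main obstacle is conceptual rather than computational. Because (as noted in Remark~\ref{r:no_invert}) the operator $(\partial_\theta + i\xi)^{-1}$ is not analytic in $\xi$ at $\xi=0$, one cannot rewrite \eqref{e:gen_ev_bloch} as a standard spectral problem; the perturbation analysis must be carried out directly at the level of the pencil using the theory in Appendix~\ref{s:pencil}. The second subtlety is the rescaling by $S(\xi)$, which is required precisely to absorb the nilpotent part of the $\xi=0$ Jordan block and turn the merely $C^1$ behavior of the eigenvalues of $\mathcal{B}_\xi^{-1}\mathcal{D}_\xi$ into the analytic expansion of $\widehat{\mathcal{M}}_\xi$ from which the leading-order matrix $\mathcal{M}_0(\phi)$ can be cleanly read off. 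Finally, the distinctness hypothesis in the sufficient condition is essential: without it the reflection-symmetry argument collapses, since a genuinely complex pair of eigenvalues could open at higher order in $\xi$ while still satisfying $\mu_1(0) = \mu_2(0) \in \RM$.
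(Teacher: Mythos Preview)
Your proposal is correct and follows essentially the same approach as the paper: both rely on the preceding Bloch reduction, the generalized kernel structure, and the explicit expansions of $\mathcal{D}_\xi$ and $\mathcal{B}_\xi$, then read off $\mathcal{M}_0(\phi)$ after the $S(\xi)$-conjugation and deduce the stability conclusions from the reflection symmetry of Remark~\ref{r:spec_sym}. Your treatment of the sufficient condition is in fact more explicit than the paper's one-sentence proof (the paper defers the detailed symmetry argument to Section~\ref{s:th_proof}), and the only cosmetic point is that the $C^1$ regularity of $\lambda_j$ follows directly from the factorization $\lambda_j(\xi)=ik\xi\,\mu_j(\xi)$ with $\mu_j$ continuous rather than from Puiseux theory per se.
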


\begin{proof}
The necessity criteria is clear since if either of the $\mu_j$ has non-zero imaginary parts then the associated eigenvalue $\lambda_j(\xi)$, defined via \eqref{e:ev_factor},
will necessarily have non-zero real part for $0<|\xi|\ll 1$, implying modulational instability.  Further, if both of the $\mu_j(0)$ are real and distinct, then modulational
stability follows from the fact that the eigenvalues associated with \eqref{e:gen_ev_bloch} are necessarily symmetric about
the imaginary axis.  In particular, one would have $\lambda_j(\xi)\in i\RM$ for all $|\xi|\ll 1$ and $j=1,2$, yielding modulational stability.
\end{proof}

The above result provides a mathematically rigorous description of the modulational instability problem for the Ostrovsky equation. As described at the end of Section~\ref{s:Whitham}, however, Whitham's theory of modulations posits that the modulational instability of a given periodic traveling wave $\phi_0$ is determined by the eigenvalues of the matrix 
$\mathcal{W}(\phi_0)$ defined in \eqref{e:Wmat}.  In the next section, we reconcile these two results by proving they are in fact equivalent.

\section{Proof of Theorem~\ref{t:main}}
\label{s:th_proof}

In this section, we establish Theorem~\ref{t:main} by performing a row-by-row computation to show that
\be
\label{e:eqmats}
\mathcal{W}(\phi)=\mathcal{M}_0(\phi)-cI
\ee 
where $\mathcal{W}(\phi)$ is the matrix defined in \eqref{e:Wmat}, i.e. it is associated to the Whitham modulation equations \eqref{e:Wsystem}, 
while the eigenvalues of $\mathcal{M}_0(\phi)$, defined in \eqref{e:M0}, 
rigorously describe the structure of the $L^2(\R)$-spectrum of the (linear operator pencil) spectral problem \eqref{e:gen_ev1}
in a sufficiently small neighborhood of the origin: see Theorem~\ref{t:pert}.  Note that the constant shift by $cI$ does not effect the imaginary parts of the eigenvalues,
and hence does not influence the modulational stability or instability of the underlying wave $\phi$.

First, observe from \eqref{e:Wmat} and \eqref{e:M0} that the first row of the matrices $\mathcal{W}(\phi)$ and $\mathcal{M}_0(\phi) - cI$ are identical. 
To show that the second rows are likewise identical, it is sufficient to establish the identities 
\be
\label{e:row2}
\left\{\begin{aligned}
&\left<\phi,\phi\right>c_k + \frac{1}{k}\left<\Psi_2^0,L_1\phi_k + L_2\Phi_1^0\right>=
	\left< 1, -\frac{1}{3}\phi^3 + \frac{3}{2}\beta k^2\phi_{\theta}^2 + \frac{\gamma}{2k^2}\d_{\theta}^{-2}\phi \right>_k,\\
&\left<\phi,\phi\right>c_P + \frac{1}{k}\left<\Psi_2^0,L_1\Phi_2^0\right>-c=
	\left< 1, -\frac{1}{3}\phi^3 + \frac{3}{2}\beta k^2\phi_{\theta}^2 + \frac{\gamma}{2k^2}\d_{\theta}^{-2}\phi \right>_P.
\end{aligned}\right.
\ee 
We start by studying the second equation above.   To this end, we note that
\begin{align*}
\left<\Psi_2^0,L_1\Phi_2^0\right>&=-\left<\partial_\theta^{-1}\phi,\left(2k\partial_\theta(c-\phi)-4\beta k^3\partial_\theta^3\right)\phi_P\right>\\
&=\left<\phi,2k(c-\phi)\phi_P - 4\beta k^3\phi_P''\right>\\
&=k\left<\phi,2(c-\phi)\phi_P - 4\beta k^2\phi_P''\right>
\end{align*}
and, recalling \eqref{e:Ldagident}, that
\begin{align*}
k^2c_P\left<\phi,\phi\right>&=-\left<L_0^\dag\partial_\theta^{-2}\phi_P,\phi\right>\\
&=-\left<k^2\left(c-\phi-\beta k^2\partial_\theta^2\right)\phi_P + \gamma\partial_\theta^{-2}\phi_P,\phi\right>\\
&=-k^2\left<(c-\phi)\phi_P-\beta k^2\phi_P'' + \frac{\gamma}{k^2}\partial_\theta^{-2}\phi_P,\phi\right>.
\end{align*}
Combining these identities and using integration by parts yields
\begin{align*}
\left<\phi,\phi\right>c_P + \frac{1}{k}\left<\Psi_2^0,L_1\Phi_2^0\right>&=
	\left<\phi,(c-\phi)\phi_P-3\beta k^2\phi_P''-\frac{\gamma}{k^2}\partial_\theta^{-2}\phi_P\right>\\
&=c+\left<1,-\frac{1}{3}\phi^3+\frac{3}{2}\beta k^2 (\phi')^2+\frac{\gamma}{2k^2}(\partial_\theta^{-1}\phi)^2\right>_P,
\end{align*}
which verifies \eqref{e:row2}(ii).

Continuing, we note since $\phi_k$ has mean-zero that the identities \eqref{e:L_adj_identity} and \eqref{e:L_phik} imply that
\[
L_0^\dag\partial_\theta^{-2}\phi_k = \partial_\theta^{-2}L_0\phi_k=-k^2 c_k\phi - \partial_\theta^{-2}L_1\phi'.
\]
Mimicking the above calculations on the momentum terms, we find that
\begin{align*}
k^2 c_k\left<\phi,\phi\right> &= -\left<L_0^\dag\partial_\theta^{-2}\phi_k + \partial_\theta^{-2}L_1\phi',\phi\right>\\
&=-\left<k^2(c-\phi-\beta k^2\partial_\theta^2)\phi_k+\gamma\partial_\theta^{-2}\phi_k+\partial_\theta^{-2}L_1\phi',\phi\right>\\
&=-k^2\left<(c-\phi)\phi_k-\beta k^2\phi_k''+\frac{\gamma}{k^2}\partial_\theta^{-2}\phi_k,\phi\right>-\left<\partial_\theta^{-2}L_1\phi',\phi\right>.
\end{align*}
Similarly, following the above calculations we find
\begin{align*}
\left<\Psi_2^0,L_1\phi_k\right>&=-\left<\partial_\theta^{-1}\phi,\left(2k\partial_\theta(c-\phi)-4\beta k^3\partial_\theta^3\right)\phi_k\right>\\
&=\left<\phi,2k(c-\phi)\phi_k-4\beta k^3\phi_k''\right>\\
&=k\left<\phi,2(c-\phi)\phi_k-4\beta k^2\phi_k''\right>.
\end{align*}
Taken together, the above yields
\begin{align*}
\left<\phi,\phi\right>c_k + \frac{1}{k}\left<\Psi_2^0,L_1\phi_k\right>&=
		\left<\phi,(c-\phi)\phi_k-3\beta k^2 \phi_k''-\frac{\gamma}{k^2}\partial_\theta^{-2}\phi_k\right>
			-\frac{1}{k^2}\left<\partial_\theta^{-2}L_1\phi',\phi\right>\\
&=\left<1,-\frac{1}{3}(\phi^3)_k+\frac{3}{2}\beta k^2\left((\phi')^2\right)_k
	+\frac{\gamma}{2k^2}\left((\partial_\theta^{-1}\phi)^2\right)_k\right>-\frac{1}{k^2}\left<\partial_\theta^{-2}L_1\phi',\phi\right>.
\end{align*}
Comparing to \eqref{e:row2}(i), it remains to calculate 
\begin{align*}
\frac{1}{k}\left<\Psi_2^0,L_2\Phi_1^0\right>-\frac{1}{k^2}\left<\partial_\theta^{-2}L_1\phi',\phi\right>&=
	-\frac{1}{k}\left<\partial_\theta^{-1}\phi,(c-\phi)\phi'-6\beta k^2\phi'''\right>\\
&\qquad\qquad			+\frac{1}{k^2}\left<2k(c-\phi)\phi'-4\beta k^3\phi''',\partial_\theta^{-1}\phi\right>\\
&=\frac{1}{k}\left<\partial_\theta^{-1}\phi,(c-\phi)\phi'+2\beta k^2\phi'''\right>,
\end{align*}
where the first equality follows by integration by parts and the definitions of $L_1$ and $L_2$ in \eqref{e:Ls}.  Now, noting that the equation
$L_0\phi'=0$ implies that
\[
(c-\phi)\phi'=\beta k^2\phi''' - \frac{\gamma}{k^2}\partial_\theta^{-1}\phi
\]
it follows that
\begin{align*}
\frac{1}{k}\left<\Psi_2^0,L_2\Phi_1^0\right>-\frac{1}{k^2}\left<\partial_\theta^{-2}L_1\phi',\phi\right>&=
		\left<\partial_\theta^{-1}\phi,3\beta k\phi'''-\frac{\gamma}{k^3}\partial_\theta^{-1}\phi\right>\\
&=\left<1,3\beta k\left(\phi'\right)^2-\frac{\gamma}{k^3}\left(\partial_\theta^{-1}\phi\right)^2\right>.
\end{align*}
Putting the above calculations together now yields
\begin{align*}
\left<\phi,\phi\right>c_k+\frac{1}{k}\left<\Psi_2^0,L_1\phi_k+L_2\Phi_1^0\right>&=
		\left<1,-\frac{1}{3}(\phi^3)_k+\frac{3}{2}\beta k^2\left((\phi')^2\right)_k
	+\frac{\gamma}{2k^2}\left((\partial_\theta^{-1}\phi)^2\right)_k\right>\\
&\qquad\qquad					\left<1,3\beta k\left(\phi'\right)^2-\frac{\gamma}{k^3}\left(\partial_\theta^{-1}\phi\right)^2\right>	\\
&=	\left<1,-\frac{1}{3}\phi^3+\frac{3}{2}\beta k^2\left(\phi'\right)^2
	+\frac{\gamma}{2k^2}\left(\partial_\theta^{-1}\phi\right)^2\right>_k,
\end{align*}
which verifies \eqref{e:row2}(i).  This establishes the key identity \eqref{e:eqmats}.   

\

The above shows that if the Whitham matrix $\mathcal{W}(\phi)$ has eigenvalues $\{\alpha_j\}_{j=1}^2$ then the linearized spectral 
problem \eqref{e:gen_ev1} has two curves of essential spectra near $\lambda=0$ of the form
\[
\lambda_j(\xi)=i(\alpha_j+c)\xi+o(\xi),~~j=1,2.
\]
Clearly then, if $\mathcal{W}(\phi)$ is elliptic, i.e. if the $\alpha_j$ have non-zero imaginary parts, this immediately implies spectral instability
of the underlying wave $\phi$.  On the other hand, the above only provides information about the leading-order term in the  
expansion of the $\lambda_j$, and hence to complete the proof of Theorem \ref{t:main} we must
show that strict hyperbolicity is sufficient to ensure that $\lambda_j(\xi)\in i\RM$ for all $|\xi|\ll 1$, i.e. that the higher order terms do not contribute
to a spectral instability in this case.

To this end, suppose that the $\alpha_j$ are real and distinct (i.e. that $\mathcal{W}(\phi)$ is strictly hyperbolic).  By the distinctness of the $\alpha_j$, 
spectral perturbation theory implies that the spectral curves are in-fact analytic in $\xi$, and hence
admit expansions of the form
\begin{equation}\label{e:spec_expansion}
\lambda_j(\xi)=i(\alpha_j+c)\xi+\mathcal{O}(\xi^2),~~j=1,2.
\end{equation}
Suppose now that $\lambda_j(\xi)$ has a non-zero real-parts for $0<|\xi|\ll 1$, indicating a spectral
instability near $\lambda=0$.  Owing to the spectral symmetry \eqref{e:spec_sym} from Remark \ref{r:spec_sym}, it follows that we must have
\[
\lambda_1(\xi)=-\overline{\lambda_2(\xi)},~~0<|\xi|\ll 1.
\]
Using the expansion \eqref{e:spec_expansion}, this implies that $\alpha_1=\alpha_2$, thus contradicting strict hyperbolicity of the Whitham matrix 
$\mathcal{W}(\phi)$.  It follows that if $\mathcal{W}(\phi)$ is strictly hyperbolic then the both spectral curves $\lambda_j(\xi)$
must be confined to the imaginary axis for $|\xi|\ll 1$, implying spectral stability in a sufficiently small neighbohrood of the origin in the spectral plane.
This completes the proof of Theorem \ref{t:main}.

\begin{remark}
We emphasize that the above argument that strict hyperbolicity of $\mathcal{W}(\phi)$ implies spectral stability near $\lambda=0$
relies strongly on the spectral symmetry \eqref{e:spec_sym} which, in turn, relies effectively on the Hamiltonian structure of the governing PDE.
In non-Hamiltonian cases (for example, in the presence of dissipation), strict hyperbolicity
is indeed insufficient to conclude spectral stability near $\lambda=0$.  In such cases, one must obtain information on the higher order
terms in the spectral expansion \eqref{e:spec_expansion}, which (in some cases) can be obtained by continuing the WKB expansion
from Section \ref{s:Whitham} to higher order or by numerical Evans function calculations.  See, for instance, \cite{BJNRZ13,BJNRZ17,DSS09,JNRZ14,SSSU12}.
\end{remark}

\section{Comments on the Reduced Ostrovsky Equation}
\label{s:W_reducedostrov}

In all of our above work, we exclusively focused on the classical Ostrovsky equation \eqref{e:ostrov} with $\beta\neq 0$.  In this final section,
we will discuss the application of our theory to the reduced Ostrovsky equation \eqref{e:reduced_ostrov}.  While we do not attempt to provide all
the details here, the main point is that nearly all of the above results still apply in the case of the reduced model with $\beta=0$.  Note this should
not be immediately obvious given that the reduced Ostrovsky model \eqref{e:reduced_ostrov} is clearly, from a PDE standpoint, a singular
perturbation of the classical Ostrovsky equation \eqref{e:ostrov} with $\beta\neq 0$. 

\

To begin, we note that in the traveling coordinates $(\eta,t)=(x-ct,t)$ the reduced Ostrovsky equation \eqref{e:reduced_ostrov} can be rewritten as
\be
\label{e:reduced_ostrov_rewrite}
\left(u_t-cu_\eta+uu_\eta\right)_\eta=\gamma u
\ee
and hence periodic traveling waves $u(\eta,t)=\phi(\eta)$ are found as solutions of the (reduced) profile equation
\begin{equation}\label{e:r_profile}
\left(\left(c-\phi\right)\phi'\right)'+\gamma\phi=0.
\end{equation}
Multiplying the above by $(c-\phi)\phi'$ reduces \eqref{e:r_profile} to the quadrature form
\[
\frac{1}{2}\left(c-\phi\right)^2 \left(\phi'\right)^2=\frac{\gamma}{3}\phi^3-\frac{c}{2}\phi^2+E,
\]
where $E\in\RM$ is a constant of integration.  Using direct phase plane analysis, the authors in \cite{GP17_2} prove
the above produces a $2$-parameter family of $T=T(c,E)$ periodic solutions $\phi(\cdot;c,E)$ of the reduced profile equation \eqref{e:r_profile}
that can be smoothly parameterized by the wave speed $c$ and the ODE energy level $E$.  Further, through a detailed 
analysis of the period and momentum function, the authors prove for each $c>0$ that the mapping $E\to T(c,E)$ is monotonically decreasing
while, for fixed (admissible) $E$), the mappings $c\to T(c,E)$ and $c\to P(c,E)$ are both monotonically increasing.  
By the Implicit Function Theorem, setting $k=1/T$ it follows that for each admissible $(c,E)$ that this family can be smoothly reparameterized
in terms of the frequency $k$ and the wave speed $c$ via  $\phi(\cdot;k,c)$.  

Of course, without information regarding
the monotonicity of the mapping $E\mapsto P(c,E)$ we can not conclude this family can be reparameterized in 
terms of the frequency $k$ and the momentum $P$, i.e. we can not directly verify Assumption \ref{a:soln} in this reduced case.
Nevertheless, throughout the remainder of this section we will continue our discussion under the assumption
that Assumption \ref{a:soln} continues to hold for the reduced profile equation \eqref{e:r_profile}.
Regarding Assumption \ref{a:ker}, we note that this was analytically verified in \cite[Lemma 2.8]{JP16} for the reduced model in the limit of asymptotically
small (Stokes) waves.  Moving forward, we will additionally operate under the assumption that Assumption \ref{a:ker} continues 
to hold in the reduced ($\beta=0$) Ostrovsky equation.

Assuming that both Assumption \ref{a:soln} and Assumption \ref{a:ker} hold for the reduced Ostrovsky equation, we note that 
all of the analysis in Sections \ref{s:Whitham} - \ref{s:th_proof} holds.  Indeed, one can directly check that all of the calculations
from those sections continue to work verbatim.  In particular, performing the multiple scales analysis from Section \ref{s:Whitham}
on the reduced Ostrovsky equation yields 
the first-order $2\times 2$ Whitham system, namely,
\be
\label{e:reduced_Whitham}
\left\{\begin{aligned}
&k_T = -(kc)_X \\
&P_T = \left< 1, -\frac{1}{3}(u^0)^3 + \frac{\gamma}{2k^2}(\d_{\theta}^{-1}u^0)^2 \right>.
\end{aligned}\right.
\ee 
Using the chain rule, the (reduced) Whitham system \eqref{e:reduced_ostrov} can be written in the quasilinear form
\be
\label{e:reduced_quasilin}
\begin{pmatrix} k \\ P \end{pmatrix}_T = \mathcal{W}_{\beta\to 0}(u^0) \begin{pmatrix} k \\ P \end{pmatrix}_X,
\ee
where
\be
\label{e:reduced_Wmat}
\mathcal{W}_{\beta\to0}(u) =
\left(\begin{array}{cc}
-(kc)_k & -(kc)_P
\\
\left<1, -\frac{1}{3}u^3 + \frac{\gamma}{2k^2}(\d_{\theta}^{-1}u)^2\right>_k & \left<1, -\frac{1}{3}u^3 + \frac{\gamma}{2k^2}(\d_{\theta}^{-1}u)^2\right>_P
\end{array}\right).
\ee
Interestingly, we note that the Whitham system for reduced Ostrovsky equation \eqref{e:reduced_ostrov} 
agrees precisely with that obtained by formally setting $\beta=0$ in \eqref{e:Wsystem}.  That is, under Assumption \ref{a:soln} and Assumption \ref{a:ker}, 
the modulation system for the reduced Ostrovsky is a regular perturbation of the modulation system for \eqref{e:ostrov} even though
the reduced Ostrovsky equation is clearly a singular perturbation of the classical Ostrovsky equation \eqref{e:ostrov}.

Similarly, one can directly check that all of the results and calculations in Sections \ref{s:modtime}-\ref{s:th_proof} continue to hold, with their proofs
in the $\beta=0$ case following verbatim the classical $\beta\neq 0$ case.  As such, it follows that our main result Theorem \ref{t:main}
continues to hold for the reduced Ostrovsky equation \ref{e:reduced_ostrov}, i.e. the hyperbolicity/ellipticity of the
(reduced) Whitham matrix \eqref{e:reduced_Wmat} corresponds to the modulational stability/instability (respectively) of the underlying
periodic traveling wave of the reduced model.  Again, this is all under the assumption that Assumption \ref{a:soln} and Assumption \ref{a:ker} continue
to hold in the reduced setting.

Given the above, we observe that due to the known (subharmonic) orbital stability result for the reduced Ostrovsky equation
in \cite{JP16}, it immediately follows that periodic traveling wave solutions for \eqref{e:reduced_ostrov} are necessarily spectrally
stable to square integrable perturbations.  By the theoretical setup in Section \ref{s:modtime}, it follows
that all periodic traveling wave solutions to the reduced Ostrovsky equation are necessarily (spectrally) modulationally stable.
In particular, the above considerations imply then that the Whitham system \eqref{e:reduced_quasilin} is necessarily
always hyperbolic.  While we do not have a direct proof of hyperbolicity outside of the above considerations, 
it would be interesting to use the integrability of the reduced
Ostrovsky equation to verify this claim directly from Whitham's modulation theory.

\appendix

\section{Spectral \&  Perturbation Theory}\label{s:pencil}

In our rigorous spectral analysis in  Section \ref{s:modtime}, we encountered the spectral problem
\begin{equation}\label{e:pencil0}
\mathcal{L}[\phi] v = \lambda \mathcal{H} v
\end{equation}
considered on $L^2(\RM)$, where 
\[
\mathcal{L}[\phi]:=\gamma+k^2\partial_\theta^2\left(c-\phi-\beta k^2\partial_\theta^2\right)
\]
and
\[
\mathcal{H}:= k\partial_\theta
\]
are closed, densely defined linear operators on $L^2(\RM)$ with $1$-periodic coefficients.
While the spectral and spectral perturbation theory for linear operator
pencils is similar to the classical theory for closed, densely defined linear operators (see Kato \cite{K76}), the differences are enough that we feel
it is worthwhile to collect the relevant results here.  The forthcoming discussion is largely based on the presentations in \cite{Baum85,GGK90,JMMP14,MP15}, and readers are 
recommended to consult these references for more details.

We begin with the following observation.

\begin{lemma}\label{L:essential}
The $L^2(\RM)$-spectrum of \eqref{e:pencil0} is entirely essential, i.e. the point spectrum is empty.
\end{lemma}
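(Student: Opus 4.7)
The plan is to argue by contradiction that no $L^2(\RM)$-eigenvalues exist whatsoever---a strictly stronger statement than the claim $\sigma_p=\emptyset$. Suppose toward a contradiction that there exist $\lambda_0\in\CM$ and $v\in L^2(\RM)\setminus\{0\}$ with $\mathcal{L}[\phi]v=\lambda_0\mathcal{H}v$.  Since $\beta\neq 0$, this is a genuine fourth-order linear ODE
\[
-\beta k^4 v^{(4)} + k^2\bigl((c-\phi)v\bigr)'' - \lambda_0 k v' + \gamma v = 0
\]
with smooth, $1$-periodic coefficients. Setting $V:=(v,v',v'',v''')^\top$, the equation is equivalent to a first-order system $V'(\theta)=A(\theta;\lambda_0)V(\theta)$ on $\CM^4$ whose coefficient matrix is likewise smooth and $1$-periodic.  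I would then invoke Floquet's theorem: letting $M(\lambda_0)\in GL_4(\CM)$ denote the monodromy matrix, one has $V(\theta+1)=M(\lambda_0)V(\theta)$, and the four-dimensional solution space splits as $E^s\oplus E^c\oplus E^u$ according to whether the Floquet multipliers $\mu_j$ satisfy $|\mu_j|<1$, $|\mu_j|=1$, or $|\mu_j|>1$; a basis is given by (at worst polynomially-corrected) Floquet modes $V_j(\theta)=e^{\nu_j\theta}P_j(\theta)$ with $P_j$ being $1$-periodic and $e^{\nu_j}=\mu_j$.

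The main step is to rule out $L^2(\RM)$-membership for every non-trivial combination of these Floquet modes.  A non-zero contribution from $E^s$ decays exponentially as $\theta\to+\infty$ but grows exponentially as $\theta\to-\infty$, so it cannot lie in $L^2((-\infty,0])$; symmetrically, non-trivial $E^u$-contributions cannot lie in $L^2([0,\infty))$.  Modes in $E^c$ are bounded but, being almost-periodic (with at worst polynomial growth in the non-semisimple subcase) and hence non-decaying, satisfy a uniform lower bound $\|V\|_{L^2([n,n+1])}\geq\delta>0$ in $n\in\ZM$ whenever non-trivial, again ruling out $L^2(\RM)$-membership.  Decomposing the initial datum $V(0)$ along $E^s\oplus E^c\oplus E^u$ and matching the required decay at $\pm\infty$ forces every component to vanish, so $v\equiv 0$---a contradiction.

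I expect the center-subspace step to be the main technical point, since it requires ruling out cancellations among bounded Floquet modes $e^{i\alpha_j\theta}P_j(\theta)$ with distinct real exponents $\alpha_j$. The rigorous justification comes from the Bohr theory of almost-periodic functions: any non-zero almost-periodic function is bounded below in its $L^2$-norm over sliding unit intervals.  The remaining stable/unstable reasoning amounts to elementary exponential growth/decay estimates, and the $L^2$-versus-Floquet dichotomy itself is a standard consequence of Floquet's theorem once the fourth-order ODE has been cast in first-order form.
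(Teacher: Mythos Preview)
Your argument is correct but takes a different route from the paper.  You work directly with the Floquet decomposition $E^s\oplus E^c\oplus E^u$ of the four-dimensional ODE solution space and rule out $L^2(\RM)$-membership piece by piece.  The paper instead argues more abstractly: if $\lambda_0$ were in the point spectrum, the eigenspace $\Sigma_0\subset L^2(\RM)$ would be finite-dimensional and invariant under the unit shift $\mathcal{S}f(\theta)=f(\theta+1)$; since $\mathcal{S}$ is unitary on $L^2(\RM)$, its restriction to $\Sigma_0$ is a finite-dimensional unitary and therefore has an eigenvector $\tilde v$ with unimodular eigenvalue $e^{i\xi}$, whence $|\tilde v(m)|=|\tilde v(0)|$ for all $m\in\ZM$, contradicting $\tilde v\in L^2(\RM)$.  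The shift-operator argument is shorter and sidesteps entirely the center-subspace step you correctly flagged as the delicate point (no almost-periodicity theory is needed), whereas your approach is more explicit about the Floquet structure and does not rely on the a priori finite-dimensionality of the eigenspace coming from the Fredholm property.
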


\begin{proof}
The proof follows closely that in \cite[Lemma 3.3]{JMMP14}.
Suppose, on the contrary, that $\lambda_0\in\sigma_p(\mathcal{L}[\phi],\mathcal{H})$ and note, by definition,
that $\lambda_0\mathcal{H}-\mathcal{L}[\phi]$ is a Fredholm operator with index zero.  In particular, the associated
eigenspace $\Sigma_0={\rm ker}\left(\lambda_0\mathcal{H}-\mathcal{L}[\phi]\right)$ is finite dimensional.  Defining the period-shift operator
\[
\mathcal{S}:L^2(\RM;\mathbb{C})\to L^2(\RM;\mathbb{C}),~~~\mathcal{S}f(\theta)=f(\theta+1)
\]
it follows that $\Sigma_0$ is an invariant subspace for  $\mathcal{S}$ and further, owing to the periodicity of the 
coefficients of $\mathcal{L}[\phi]$, that $\mathcal{S}$ commutes with $\mathcal{L}[\phi]$.  Restricting $\mathcal{S}$
to the finite-dimensional eigenspace $\Sigma_0$ results in a finite-dimensional unitary operator $\widetilde{\mathcal{S}}$
which must therefore have an eigenvalue $e^{i\xi}$ for some $\xi\in[-\pi,\pi)$ with corresponding eigenfunction
$\tilde{v}\in\Sigma_0$.  It then follows that for each $m\in\ZM$ we have
\[
|\tilde{v}(m)|=\left|\widetilde{\mathcal{S}}^m \tilde{v}(0)\right|=\left|e^{im\xi}\tilde{v}(0)\right|=|\tilde{v}(0)|,
\]
which contradicts that $\tilde{v}\in L^2(\RM)$ unless $\tilde{v}=0$.  It follows that the point spectrum is empty, as claimed.
\end{proof}

To analyze the essential spectrum of \eqref{e:pencil0}, we note that while the spectral problem \eqref{e:pencil0} is non-standard,
taking the form of a linear operator pencil, it can be rewritten as a $3\times 3$ system of ODEs of the form
\begin{equation}\label{e:sys_ode}
Y'(\theta) = \mathbb{A}(\theta,\lambda)Y(\theta),~~~Y = (v,v',v'')^T.
\end{equation}
Defining $\mathbb{F}(\theta,\lambda)$ to be the associated fundamental matrix solution, i.e. the unique $3\times 3$ matrix solution of \eqref{e:sys_ode} satisfying
$\mathbb{F}(0,\lambda)=I$ for all $\lambda\in\CM$, the fact that the coefficients of $\mathcal{L}[\phi]$, and hence the matrix $\mathbb{A}$, are $1$-periodic
implies that $\mathbb{F}$ satisfies
\[
\mathbb{F}(\theta+1,\lambda)=\mathbb{F}(\theta,\lambda)\mathbb{M}(\lambda)
\]
for all $\theta\in\RM$, where here $\mathbb{M}(\lambda):=\mathbb{F}(1,\lambda)$ is the monodromy matrix (i.e. period map) associated 
with \eqref{e:sys_ode}.  By standard ODE theory, the monodromy matrix is an entire function of $\lambda\in\CM$ and,
further, that any eigenfunction $\tilde{Y}(\lambda)$ of $\mathbb{M}(\lambda)$ necessarily induces the non-trivial solution 
${ Y}(\theta,\lambda)=\mathbb{F}(\theta,\lambda)\tilde{ Y}(\lambda)$ of \eqref{e:sys_ode} which, furthermore, may
be decomposed as
\[
{ Y}(\theta;\lambda)=e^{\mu(\lambda)\theta}{ W}(\theta;\lambda)
\]
where ${ W}(\theta+1;\lambda)={W}(\theta;\lambda)$ for all $\theta\in\RM$.  The next result characterizes the $L^2(\RM)$-spectrum
of \eqref{e:pencil0} in terms of the monodromy operator $\mathbb{M}(\lambda)$.

\begin{lemma}\label{L:evans}
We have $\lambda\in\sigma_{L^2(\RM)}\left(\mathcal{L}[\phi],\mathcal{H}\right)$ if and only if there exists a $\xi\in[-\pi,\pi)$
such that
\[
\det\left(\mathbb{M}(\lambda)-e^{i\xi}I\right)=0.
\]
\end{lemma}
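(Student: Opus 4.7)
The plan is to reduce the spectral problem on $L^2(\RM)$ to a condition on bounded solutions of the underlying ODE system, and then apply Floquet theory to translate that into a statement about the monodromy matrix $\mathbb{M}(\lambda)$. By Lemma~\ref{L:essential} the point spectrum is empty, so $\lambda\in\sigma_{L^2(\RM)}(\mathcal{L}[\phi],\mathcal{H})$ if and only if $\lambda\mathcal{H}-\mathcal{L}[\phi]$ fails to be boundedly invertible on $L^2(\RM)$. The key dictionary, standard for operators with periodic coefficients, is that this failure is equivalent to the ODE system $Y'=\mathbb{A}(\theta,\lambda)Y$ admitting a nontrivial \emph{bounded} (but not $L^2$) solution on $\RM$, which in turn by Floquet theory is equivalent to $\mathbb{M}(\lambda)$ having an eigenvalue on the unit circle $\{e^{i\xi}:\xi\in[-\pi,\pi)\}$.

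For the forward direction, I would assume that $\det(\mathbb{M}(\lambda)-e^{i\xi}I)=0$ for some $\xi\in[-\pi,\pi)$. Then $\mathbb{M}(\lambda)$ has $e^{i\xi}$ as an eigenvalue with some eigenvector $\tilde{Y}(\lambda)$, and the corresponding solution $Y(\theta,\lambda)=\mathbb{F}(\theta,\lambda)\tilde{Y}(\lambda)$ admits the Bloch decomposition $Y(\theta;\lambda)=e^{i\xi\theta}W(\theta;\lambda)$ with $W$ being $1$-periodic. Its first component yields a bounded, non-decaying, non-trivial solution $v(\theta)=e^{i\xi\theta}w(\theta)$ of $(\lambda\mathcal{H}-\mathcal{L}[\phi])v=0$ with $w\in L^2_{\rm per}(0,1)$. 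I would then construct a Weyl (singular) sequence by taking smooth cutoff functions $\chi_n$ supported on $[-n,n]$ with $\chi_n\equiv 1$ on $[-n+1,n-1]$ and bounded derivatives, and setting $v_n := \chi_n v/\|\chi_n v\|_{L^2}$. A direct computation shows $\|v_n\|_{L^2}=1$ while the commutator $[\lambda\mathcal{H}-\mathcal{L}[\phi],\chi_n]v$ is supported on two unit-length intervals (where the derivatives of $\chi_n$ live), yielding $\|(\lambda\mathcal{H}-\mathcal{L}[\phi])v_n\|_{L^2}=O(n^{-1/2})\to 0$. Since $v_n\rightharpoonup 0$ weakly (as the $v_n$ concentrate on sets of measure tending to infinity), this forces $\lambda$ into the essential spectrum.

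For the reverse direction, I would assume $\mathbb{M}(\lambda)$ has no eigenvalues on the unit circle. By Floquet theory, every nontrivial solution of $Y'=\mathbb{A}(\theta,\lambda)Y$ grows exponentially as $\theta\to+\infty$ or $\theta\to-\infty$, and the system admits an exponential dichotomy on $\RM$: that is, $\CM^n$ decomposes as $E^s(\lambda)\oplus E^u(\lambda)$, invariant under $\mathbb{M}(\lambda)$, such that $\mathbb{F}(\theta;\lambda)$ decays exponentially as $\theta\to+\infty$ on $E^s$ and exponentially as $\theta\to-\infty$ on $E^u$. Using the associated stable and unstable projections, I would explicitly build a Green's kernel $G(\theta,\sigma;\lambda)$ satisfying $|G(\theta,\sigma;\lambda)|\lesssim e^{-\eta|\theta-\sigma|}$ for some $\eta>0$ and then define
\[
R(\lambda)f(\theta):=\int_{\RM} G(\theta,\sigma;\lambda)f(\sigma)\,d\sigma.
\]
Boundedness of $R(\lambda)$ on $L^2(\RM)$ follows from Schur's test (or Young's inequality), and a direct verification shows $R(\lambda)$ inverts $\lambda\mathcal{H}-\mathcal{L}[\phi]$, placing $\lambda$ in the resolvent set.

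The main obstacle I expect is the reverse direction: carefully justifying that absence of unit-circle Floquet multipliers yields a genuine exponential dichotomy and then that the constructed Green's kernel produces a bounded two-sided inverse of the pencil on $L^2(\RM)$ (as opposed to a mere parametrix). The delicate point is that $\mathcal{H}=k\partial_\theta$ is not invertible on $L^2(\RM)$, so one cannot pass to a standard resolvent for $\mathcal{H}^{-1}\mathcal{L}[\phi]$; the inverse must be built intrinsically for the pencil via the ODE Green's function. Once the exponential decay of $G$ is in hand, however, the $L^2$ mapping properties and the inversion identity follow from standard arguments.
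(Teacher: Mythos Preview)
Your proposal is correct and follows essentially the same approach as the paper: both reduce the question to the equivalence between essential-spectrum membership and the existence of a nontrivial bounded solution of the ODE, and then invoke Floquet theory to translate this into the monodromy having a unit-modulus eigenvalue. The paper simply cites this equivalence to Gardner and Henry, whereas you sketch its proof via a Weyl sequence in one direction and an exponential-dichotomy Green's function in the other; these are precisely the standard arguments underlying the cited results.
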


\begin{proof}
We have already established in Lemma \ref{L:essential} that the spectrum of \eqref{e:pencil0} is purely essential.  Further,
it is well known (see \cite{Gard93,Hen81}) that $\lambda\in\sigma_{\rm ess}\left(\mathcal{L}[\phi],\mathcal{H}\right)$
if and only if the ODE system \eqref{e:sys_ode} or, equivalently, the ODE \eqref{e:pencil0}, admits a non-trivial solution
that is uniformly bounded on $\RM$: see, for example, \cite[pg. 139-140]{Hen81} or \cite[Proposition 2.1]{Gard93}.  By the above
considerations, the existence of such a non-trivial bounded solution will, in turn, 
happen if and only if the monodromy operator $\mathbb{M}(\lambda)$ has an eigenvalue on the unit circle.  
\end{proof}

By the above work, it follows that $\lambda\in\sigma_{L^2(\RM)}\left(\mathcal{L}[\phi],\mathcal{H}\right)$ if and only if the differential equation
\eqref{e:pencil0} admits a non-trivial solution of the form
\[
v(\theta) = e^{i\xi\theta}w(\theta)
\]
for some $\xi\in[-\pi,\pi)$ and some $w\in L^2_{\rm per}(0,1)$.  Substituting into \eqref{e:pencil0} leads to the one-parameter
family of spectral problems
\begin{equation}\label{e:pencil1}
\mathcal{L}_\xi[\phi]w=\lambda\mathcal{H}_\xi w
\end{equation}
considered on $L^2_{\rm per}(0,1)$, where
\[
\mathcal{L}_\xi[\phi]:=e^{-i\xi\theta}\mathcal{L}[\phi]e^{i\xi\theta}=\gamma+k^2(\partial_\theta+i\xi)^2\left(c-\phi-\beta k^2(\partial_\theta+i\xi)^2\right)
\]
and
\[
\mathcal{H}_\xi:= k(\partial_\theta+i\xi)
\]
denote the associated Bloch operators.  
Our next result characterizes the spectrum associated to \eqref{e:pencil1}.

\begin{lemma}\label{L:discrete}
With the above setup, for each fixed $\xi\in[-\pi,\pi)$ we have 
\[
\sigma(\mathcal{L}_\xi[\phi],\mathcal{H}_\xi)=\sigma_p(\mathcal{L}_\xi[\phi],\mathcal{H}_\xi).
\]
That is, the $L^2_{\rm per}(0,1)$-spectrum
associated to the linear operator pencil \eqref{e:pencil1} is comprised entirely of isolated eigenvalues with finite (algebraic) multiplicity.
\end{lemma}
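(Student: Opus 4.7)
The plan is to recast the pencil eigenvalue problem $\mathcal{L}_\xi[\phi] w = \lambda \mathcal{H}_\xi w$ on $L^2_{\rm per}(0,1)$ in terms of quasi-periodic solutions of an ODE on the line, and to identify the $L^2_{\rm per}(0,1)$-spectrum with the zero set of an entire function (a periodic Evans function).  First, I would undo the Bloch conjugation via $v(\theta) = e^{i\xi\theta} w(\theta)$: a nonzero $w \in L^2_{\rm per}(0,1)$ satisfies $(\mathcal{L}_\xi[\phi] - \lambda \mathcal{H}_\xi) w = 0$ if and only if the function $v$ is a nonzero solution of $\mathcal{L}[\phi] v = \lambda \mathcal{H} v$ with quasi-periodic boundary condition $v(\theta+1) = e^{i\xi} v(\theta)$.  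Recast as the first-order system \eqref{e:sys_ode}, this happens if and only if the monodromy matrix $\mathbb{M}(\lambda)$ admits $e^{i\xi}$ as an eigenvalue.  Generalized pencil eigenfunctions (Jordan chains above a genuine eigenfunction) correspond under the same Bloch transform to generalized eigenvectors of $\mathbb{M}(\lambda_0)$ at $e^{i\xi}$, obtained by successively differentiating the ODE in $\lambda$; since $\mathbb{M}(\lambda_0)$ is a finite matrix, any such chain is finite.

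Since $\mathbb{A}(\theta,\lambda)$ in \eqref{e:sys_ode} depends polynomially on $\lambda$, the fundamental matrix $\mathbb{F}(\theta,\lambda)$ is entire in $\lambda$ for each fixed $\theta$, and hence so are $\mathbb{M}(\lambda) = \mathbb{F}(1,\lambda)$ and the periodic Evans function
$$D_\xi(\lambda) := \det\bigl(\mathbb{M}(\lambda) - e^{i\xi} I\bigr).$$
By the preceding paragraph, $\lambda \in \sigma_{L^2_{\rm per}(0,1)}(\mathcal{L}_\xi[\phi], \mathcal{H}_\xi)$ if and only if $D_\xi(\lambda) = 0$, and the algebraic multiplicity of a pencil eigenvalue $\lambda_0$ is dominated by the order of $D_\xi$ at $\lambda_0$.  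The identity theorem for entire functions then immediately yields the desired conclusion, namely that the spectrum is a discrete set of eigenvalues of finite algebraic multiplicity, \emph{provided} we know $D_\xi \not\equiv 0$.

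The main obstacle is therefore to verify this non-degeneracy, $D_\xi \not\equiv 0$, for each fixed $\xi \in [-\pi,\pi)$.  I plan to do this by a large-$|\lambda|$ asymptotic analysis of the characteristic exponents for \eqref{e:sys_ode}.  Freezing the coefficients, the dominant balance in $\mathcal{L}[\phi] v = \lambda \mathcal{H} v$ for $|\lambda| \gg 1$ pits the leading term $-\beta k^4 v^{(4)}$ of $\mathcal{L}[\phi]$ against $\lambda k v'$; analysis of the resulting characteristic polynomial yields three roots $\mu$ of modulus $|\mu| \sim |\lambda/(\beta k^3)|^{1/3}$ together with one vanishing root.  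Choosing $\lambda$ in a sector where at least one of these exponents satisfies $\mathrm{Re}(\mu) \to +\infty$, a standard Birkhoff-type argument (tracking the full ODE as a perturbation of the frozen-coefficient problem) shows that $\mathbb{M}(\lambda)$ has a Floquet multiplier of modulus diverging to infinity; such a multiplier cannot coincide with the unit-modulus number $e^{i\xi}$, so $D_\xi(\lambda) \ne 0$ for such $\lambda$ and in particular $D_\xi \not\equiv 0$.  An alternative, more functional-analytic route is to note that $\mathcal{L}_\xi[\phi] - \lambda \mathcal{H}_\xi : H^4_{\rm per}(0,1) \to L^2_{\rm per}(0,1)$ is an elliptic ODE operator on the circle and hence Fredholm of index zero for every $\lambda \in \mathbb{C}$, and then to invoke the analytic Fredholm theorem once a single point $\lambda_0$ in the pencil's resolvent set has been identified (via the asymptotics above); the non-invertibility set of the resulting analytic family of compact perturbations of the identity is automatically discrete.

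Combining these pieces yields $\sigma_{L^2_{\rm per}(0,1)}(\mathcal{L}_\xi[\phi], \mathcal{H}_\xi) = \sigma_p(\mathcal{L}_\xi[\phi], \mathcal{H}_\xi)$, with each eigenvalue isolated and of finite algebraic multiplicity, as claimed.
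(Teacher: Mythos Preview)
Your proposal is correct and follows essentially the same route as the paper: identify the $L^2_{\rm per}(0,1)$-spectrum of the pencil with the zero set of the periodic Evans function $D_\xi(\lambda)=\det(\mathbb{M}(\lambda)-e^{i\xi}I)$, observe that this function is entire in $\lambda$, and conclude that its zeros are isolated and of finite order, hence the spectrum consists of isolated eigenvalues of finite algebraic multiplicity. The paper's proof is terser, citing \cite{Gard93} for the correspondence between the order of a zero of $D_\xi$ and the algebraic multiplicity of the pencil eigenvalue, and it does not explicitly verify that $D_\xi\not\equiv 0$; you are right to flag this as a point requiring justification, and your proposed large-$|\lambda|$ asymptotic argument (or the analytic Fredholm alternative) is an appropriate way to close that gap.

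One small imprecision: your sentence asserting that Jordan chains of the pencil correspond to generalized eigenvectors of the fixed matrix $\mathbb{M}(\lambda_0)$ at the eigenvalue $e^{i\xi}$ is not quite right. The pencil's Jordan chains involve $\lambda$-derivatives of the operator, and the correct correspondence is with root functions of the analytic matrix family $\lambda\mapsto\mathbb{M}(\lambda)-e^{i\xi}I$ (Keldysh chains), not with the generalized eigenspace of $\mathbb{M}(\lambda_0)$ alone. This does not affect your conclusion, since finiteness of the algebraic multiplicity follows directly from $D_\xi$ vanishing to finite order, which is what both you and the paper ultimately rely on.
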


\begin{proof}
For a fixed $\xi\in[-\pi,\pi)$, we know from Lemma \ref{L:evans} and its following discussion that  
$\lambda_0\in\sigma_{L^2_{\rm per}(0,1)}\left(\mathcal{L}_\xi[\phi],\mathcal{H}_\xi\right)$
if and only $\lambda_0$ is a root of the function
\[
\CM\ni\lambda\mapsto D(\lambda,\xi):=\det\left(\mathbb{M}(\lambda)-e^{i\xi}I\right)
\]
and that such a root necessarily induces a non-trivial $1$-periodic solution of \eqref{e:pencil1}.  This shows that
the spectrum of \eqref{e:pencil1} is indeed comprised entirely of eigenvalues.  
Further, since the function $D(\lambda,\xi)$, known in the literature as the periodic Evans function, is known to be an entire function of $\lambda$, we know
that its roots are necessarily isolated (i.e. they have no finite accumulation point) and have finite multiplicity (i.e. if $\lambda_0$ is a root then there exists a $k\in\NM$
such that $\partial_\lambda^kD(\lambda_0,\xi)\neq 0$).  Since the roots of $D(\cdot,\xi)$ correspond in location and (algebraic) multiplicity with the 
eigenvalues of \eqref{e:pencil1} (see \cite{Gard93}) it follows that the point spectrum consists of isolated eigenvalues with finite multiplicity, as claimed.
\end{proof}

\begin{remark} 
By combining Lemma \ref{L:evans} with Lemma \ref{L:discrete}, it follows that we have the spectral decomposition
\[
\sigma_{L^2(\RM)}\left(\mathcal{L}[\phi],\mathcal{H}\right)=\bigcup_{\xi\in[-\pi,\pi)}\sigma_p\left(\mathcal{L}_\xi[\phi],\mathcal{H}_\xi\right).
\]
This provides a continuous parameterization of the essential spectrum of \eqref{e:pencil0} in terms of the $1$-parameter family
of eigenvalues coming from \eqref{e:pencil1}.  Note that, alternative to the above considerations, we could have established the above spectral
decomposition directly by using the functional-analytic Bloch Transform
\[
B:L^2(\RM)\to L^2\left([-\pi,\pi);L^2_{\rm per}(0,1)\right),~~B(v)(\xi,x)=\sum_{\ell\in\ZM} e^{2\pi i\ell x}\hat{v}(\xi+2\pi \ell),
\]
where $\hat{v}(z)=\int_{\RM} e^{-iz\omega}v(\omega)d\omega$ denotes the standard Fourier Transform of $v$,
to prove that $\lambda\mathcal{H}-\mathcal{L}[\phi]$ is boundedly invertible on $L^2(\RM)$ if and only if 
the Bloch operators $\lambda\mathcal{H}_\xi-\mathcal{L}_\xi[\phi]$ are boundedly invertible on $L^2_{\rm per}(0,1)$
for all $\xi\in[-\pi,\pi)$.  See, for instance, the argument in \cite[Appendix B]{J2013} in the context of a fractional KdV equation.
\end{remark}

The above provides a global description of the spectrum associated to the linear operator pencil \eqref{e:pencil0} and, in particular, its relationship
to the $1$-parameter family of eigenvalue problems \eqref{e:pencil1}.  In our work in Section \ref{s:modtime}, we saw that we additionally
need an appropriate spectral perturbation theory for the eigenvalue problems \eqref{e:pencil1}.  To this end,
we note that in  Theorem \ref{t:co-per} we established that $\lambda=0$ is an isolated eigenvalue for the linear operator
pencil \eqref{e:pencil1} at $\xi=0$ with algebraic multiplicity two and geometric multiplicity one.  To track the splitting of this (double) eigenvalue
for $|\xi|\ll 1$, we rely on the spectral perturbation theory for linear operator pencils developed by M\"oller \& Pivovarchik \cite{MP15}.
Specifically, since the coefficients of \eqref{e:pencil1} depend analytically on both $\lambda$ and $\xi$ for $|(\lambda,\xi)|\ll 1$, 
the next result follows immediately by applying Theorem 9.2.4 and Lemma 1.1.9 in \cite{MP15} to the present context, and we refer the reader there for details.

\begin{lemma}\label{l:pencil_spec}
Under the assumptions of Theorem \ref{t:co-per}, there exists a $\delta>0$ such that for all $|\xi|\ll 1$ the linear operator
pencil \eqref{e:pencil1} has exactly two eigenvalues, counted with respect to algebraic multiplicity, interior to the disk $B(0,\delta)\subset\CM$.
In particular, we have the following:
\begin{enumerate}
\item The eigenvalues $\{\lambda_j(\xi)\}_{j=1}^2$ bifurcating from the $(\lambda,\xi)=(0,0)$ state are either analytic in $\xi$ or else
they admit a convergent Puiseux series representation 
\[
\lambda_j(\xi)=\sum_{n=1}^\infty a_n\xi^{n/2}.
\]
Either way, the eigenvalues $\lambda_j(\xi)$ are continuous in $\xi$ for $|\xi|\ll 1$.

\item If $\Gamma=\partial B(0,\delta)$ then
\[
\Pi(\xi):=\frac{1}{2\pi i}\oint_\Gamma\mathcal{H}_\xi\left(\lambda \mathcal{H}_\xi-\mathcal{L}_\xi[\phi]\right)^{-1}d\lambda
\]
is a well-defined projection operator on $L^2_{\rm per}(0,1)$ with rank 2.  Specifically, 
\[
\Pi(\xi):L^2_{\rm per}(0,1)\to\bigoplus_{j=1}^2{\rm gker}\left(\lambda_j(\xi)\mathcal{H}_\xi-\mathcal{L}_\xi[\phi]\right).
\]
is the spectral projection onto the total eigenspace bifurcating form the $(\lambda,\xi)=(0,0)$ state.
\end{enumerate}
\end{lemma}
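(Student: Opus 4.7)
The plan is to invoke the analytic spectral perturbation theory for Fredholm operator pencils developed by M\"oller \& Pivovarchik in \cite{MP15} and, in parallel, to leverage the Evans-type determinant $D(\lambda,\xi):=\det\left(\mathbb{M}(\lambda,\xi)-e^{i\xi}I\right)$ introduced in the proof of Lemma \ref{L:discrete} to count eigenvalues. First, I would verify the basic input to the perturbation machinery: viewing $\mathcal{L}_\xi[\phi]$ as a bounded operator $H^4_{\rm per}(0,1)\to L^2_{\rm per}(0,1)$ and $\mathcal{H}_\xi$ as a bounded operator $H^1_{\rm per}(0,1)\to L^2_{\rm per}(0,1)$, the mapping $(\lambda,\xi)\mapsto \lambda\mathcal{H}_\xi-\mathcal{L}_\xi[\phi]$ is jointly analytic in a neighborhood of $(0,0)\in\CM^2$, and $\mathcal{L}_0[\phi]$ is Fredholm of index zero as an elliptic differential operator on the circle. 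By Theorem \ref{t:co-per}, zero is an isolated eigenvalue of algebraic multiplicity two, so the associated Evans determinant satisfies $D(0,0)=0$ and $\partial_\lambda D(0,0)=0$ while $\partial_\lambda^2 D(0,0)\neq 0$.

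Next, I would prove the counting and continuity claim in part (1). Since $D(\lambda,\xi)$ is entire in $\lambda$ for each fixed $\xi$ and jointly analytic near $(0,0)$, the Weierstrass Preparation Theorem yields a factorization
\[
D(\lambda,\xi) = U(\lambda,\xi)\left(\lambda^2 + a_1(\xi)\lambda + a_0(\xi)\right),
\]
with $U$ analytic and nonvanishing near $(0,0)$ and $a_j(\xi)$ analytic in $\xi$ satisfying $a_j(0)=0$. Choosing $\delta>0$ small enough that $D(\cdot,0)$ has no other zeros in $\overline{B(0,\delta)}$, a standard Rouch\'e argument gives that $D(\cdot,\xi)$ has exactly two zeros in $B(0,\delta)$ counted with multiplicity for all $|\xi|\ll 1$. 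These zeros, which by Lemma \ref{L:discrete} coincide with the eigenvalues $\lambda_j(\xi)$ of the pencil, are the roots of a quadratic with analytic coefficients, and hence are either analytic in $\xi$ or admit a convergent Puiseux expansion in $\xi^{1/2}$, via the Newton--Puiseux theorem applied to the explicit discriminant. Continuity in $\xi$ is immediate in either case.

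For part (2), I would verify that the contour $\Gamma = \partial B(0,\delta)$ avoids the spectrum of the pencil $\lambda\mathcal{H}_\xi - \mathcal{L}_\xi[\phi]$ for all $|\xi|\ll 1$ (since zeros of $D(\cdot,\xi)$ depend continuously on $\xi$), so that the pencil resolvent $R(\lambda,\xi):=(\lambda\mathcal{H}_\xi-\mathcal{L}_\xi[\phi])^{-1}$ is bounded uniformly in $(\lambda,\xi)\in\Gamma\times\{|\xi|<\eps\}$ for $\eps>0$ small. The Riesz-type integral $\Pi(\xi)$ defined in the statement is then a bounded operator on $L^2_{\rm per}(0,1)$. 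The idempotency $\Pi(\xi)^2=\Pi(\xi)$ follows from the pencil resolvent identity
\[
R(\lambda)\mathcal{H}_\xi R(\mu) = \frac{R(\mu)-R(\lambda)}{\mu-\lambda},
\]
derived by rewriting $\lambda\mathcal{H}_\xi-\mathcal{L}_\xi[\phi] = \mu\mathcal{H}_\xi-\mathcal{L}_\xi[\phi] + (\lambda-\mu)\mathcal{H}_\xi$, combined with a Fubini exchange of two nested contour integrals along slightly perturbed copies of $\Gamma$. The identification of the range of $\Pi(\xi)$ with the total generalized eigenspace for the eigenvalues interior to $\Gamma$ then follows by a Laurent-expansion argument about each $\lambda_j(\xi)$, exactly as in the classical Kato theory.

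The main obstacle, and the reason to appeal to \cite{MP15} rather than directly to Kato, is the identification of $\mathrm{rank}\,\Pi(\xi)$ with the total algebraic multiplicity of the eigenvalues of the pencil inside $\Gamma$, which for an operator pencil is defined in terms of Jordan chains (as in the paper's Notation section) rather than as the order of a pole of the resolvent of a standard operator. The Keldysh-type correspondence between this Jordan-chain multiplicity and the order of the zero of the Evans determinant, established in \cite[Chapter 1]{MP15}, is the key technical input. Combining it with our count of two zeros of $D(\cdot,\xi)$ in $B(0,\delta)$ and with $\mathrm{rank}\,\Pi(0)=2$ from Theorem \ref{t:co-per}, we conclude that $\mathrm{rank}\,\Pi(\xi)=2$ for all $|\xi|\ll 1$ and that its range is exactly the claimed direct sum of generalized kernels.
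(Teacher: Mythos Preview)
Your proposal is correct and ultimately rests on the same foundation as the paper's argument, namely the analytic perturbation theory for Fredholm pencils in \cite{MP15}. The paper, however, does not give any details: it simply observes that the coefficients of \eqref{e:pencil1} depend analytically on $(\lambda,\xi)$ and then cites Theorem 9.2.4 and Lemma 1.1.9 of \cite{MP15} directly for both parts of the lemma. Your treatment of part (1) is genuinely more self-contained: rather than invoking abstract pencil theory, you use the periodic Evans function $D(\lambda,\xi)$ already introduced in Lemma \ref{L:discrete}, apply Weierstrass preparation to extract a quadratic Weierstrass polynomial, and then read off the Puiseux structure from Newton--Puiseux applied to its discriminant. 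This buys a concrete argument that ties the eigenvalue count directly to the Evans machinery the paper has already built, at the cost of importing Weierstrass preparation. (One small notational slip: in the paper's setup the monodromy matrix depends only on $\lambda$, not on $\xi$, so $D(\lambda,\xi)=\det(\mathbb{M}(\lambda)-e^{i\xi}I)$; the joint analyticity you need is unaffected.) For part (2) your outline of the idempotency via the pencil resolvent identity and the identification of the rank via the Keldysh correspondence is exactly what \cite{MP15} does, just unpacked.
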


Key to our spectral perturbation analysis in Section \ref{s:perttheory} was the analytic dependence on the critical bifurcating eigenspaces
from the $(\lambda,\xi)=(0,0)$ state on the Bloch parameter $\xi$.  To justify this, we must establish that the resolvent operator
\[
\left(\lambda\mathcal{H}_\xi-\mathcal{L}_\xi[\phi]\right)^{-1}
\]
is analytic in the two variables $\lambda$ and $\xi$.  Analytic dependence on $\lambda$ follows by standard arguments, while analyticity
with respect to $\xi$ follows due to the fact that the coefficients of $\lambda\mathcal{H}_\xi-\mathcal{L}_\xi[\phi]$ depend analytically
on $\xi$.  More precisely, for $\lambda\in\rho(\mathcal{L}_\xi[\phi],\mathcal{H}_\xi)$ we let $U:H^4_{\rm per}(0,1)\to L^2_{\rm per}(0,1)$ map bijectively
onto $H^4_{\rm per}(0,1)\subset L^2_{\rm per}(0,1)$ via the identity map.  The key is then to note that the operator
\[
V_\xi:H^4_{\rm per}(0,1)\to L^2_{\rm per}(0,1),~~~V_\xi:=\left(\lambda\mathcal{H}_\xi-\mathcal{L}_\xi[\phi]\right)U
\]
is a boundedly analytic operator of $\xi$ for $|\xi|\ll 1$.  Since $V_0$ is boundedly invertible, it follows that $V_\xi$ is boundedly invertible
for $|\xi|\ll 1$ and that
\[
\left(\lambda\mathcal{H}_\xi-\mathcal{L}_\xi[\phi]\right)^{-1}=UV_\xi^{-1}:L^2_{\rm per}(0,1)\to L^2_{\rm per}(0,1),~~|\xi|\ll 1.
\]
In particular, being the composition of two boundedly analytic operators, it follows that the resolvent is indeed analytic in $\xi$ for $|\xi|\ll 1$.
Since $\mathcal{H}_\xi$ clearly depends analytically on $\xi$, it follows from Lemma \ref{l:pencil_spec} 
that the projection operator $\Pi(\xi)$ will depend analytically on $\xi$ for $|\xi|\ll 1$.  In particular, it follows that the 
total eigenspace bifurcating from the $(\lambda,\xi)=(0,0)$ state, corresponding to the range of $\Pi(\xi)$, 
depends analytically on $\xi$ for $|\xi|\ll 1$.

\bibliographystyle{abbrv}
\bibliography{ost}

\end{document}